\definecolor{darkblue}{rgb}{0,0,0.6}
\newtheorem*{rep@theorem}{\rep@title}
\newcommand{\newreptheorem}[2]{%
	\newenvironment{rep#1}[1]{%
		\def\rep@title{#2 \ref{##1}}%
		\begin{rep@theorem}}%
		{\end{rep@theorem}}}
\newtheorem{proposition}{Proposition}[section]
\newtheorem{theorem}[proposition]{Theorem}
\newtheorem{corollary}[proposition]{Corollary}
\newtheorem{lemma}[proposition]{Lemma}
\theoremstyle{definition}
\newtheorem{definition}[proposition]{Definition}
\newtheorem{question}[proposition]{Question}
\theoremstyle{remark}
\newtheorem{remark}[proposition]{Remark}
\newtheorem*{remark*}{Remark}
\crefname{theorem}{Theorem}{Theorems}
\crefname{proposition}{Proposition}{Propositions}
\crefname{corollary}{Corollary}{Corollaries}
\crefname{definition}{Definition}{Definitions}
\crefname{lemma}{Lemma}{Lemmas}
\crefname{question}{Question}{Questions}
\crefname{example}{Example}{Examples}
\crefname{conjecture}{Conjecture}{Conjectures}
\crefname{remark}{Remark}{Remarks}
\crefname{const}{Construction}{Constructions}
\newcommand{\lk}{\operatorname{lk}}
\newcommand{\Q}{\mathbb{Q}}
\newcommand{\Z}{\mathbb{Z}}
\newcommand{\Id}{\operatorname{Id}}
\newcommand{\id}{\operatorname{Id}}
\newcommand{\Arf}{\operatorname{Arf}}
\newcommand{\ol}{\overline}
\newcommand{\wt}{\widetilde}
\newcommand{\pri}{\mathfrak{pri}}
\newcommand{\ks}{\operatorname{ks}}
\DeclareMathOperator{\Aut}{Aut}
\DeclareMathOperator{\Spin}{Spin}
\DeclareMathOperator{\GL}{GL}
\DeclareMathOperator{\SO}{SO}
\DeclareMathOperator{\BSO}{BSO}
\DeclareMathOperator{\BSpin}{BSpin}
\DeclareMathOperator{\Isom}{Isom}
\DeclareMathOperator{\simp}{simp}
\DeclareMathOperator{\Tors}{Tors}
\newcommand{\ie}{i.e.~}
\newcommand{\sumStwo}[1]{\ensuremath{\#_{#1} (S^2 \times S^2)}}
\begin{document}
\title{Algorithms in $4$-manifold topology}

\author[SB]{Stefan Bastl}
\author[RB]{Rhuaidi Burke}
\author[RC]{Rima Chatterjee}
\author[SD]{Subhankar Dey}
\author[AD]{Alison Durst}
\author[SF]{Stefan Friedl}
\author[DG]{Daniel Galvin}
\author[AG]{Alejandro Garc\'ia Rivas}
\author[TH]{Tobias Hirsch}
\author[CH]{Cara Hobohm}
\author[C-SH]{Chun-Sheng Hsueh}
\author[MK]{Marc Kegel}
\author[FK]{Frieda Kern}
\author[SL]{Shun Ming Samuel Lee}
\author[CL]{Clara L\"oh}
\author[NM]{Naageswaran Manikandan}
\author[LM]{L\'eo Mousseau}
\author[LM]{Lars Munser}
\author[MP]{Mark Pencovitch}
\author[PP]{Patrick Perras}
\author[MP]{Mark Powell}
\author[JPQ]{Jos\'e Pedro Quintanilha}
\author{Lisa Schambeck}
\author[DS]{David Suchodoll}
\author[MT]{Martin Tancer}
\author[AT]{Annika Thiele}
\author[PT]{Paula Tru\"ol}
\author[MU]{Matthias Uschold} 
\author[SV]{Simona Vesel\'a}
\author[MW]{Melvin Weiß}
\author[MVW]{Magdalina von Wunsch-Rolshoven}

\address{Fakult\"at f\"ur Mathematik, Universit\"at Regensburg, Germany}
\email{stefan.bastl@stud.uni-regensburg.de}
\email{alison.durst@stud.uni-regensburg.de}
\email{sfriedl@gmail.com}
\email{tobias.hirsch@stud.uni-regensburg.de}
\email{clara.loeh@ur.de}
\email{lars.munser@ur.de}
\email{patrick.perras@stud.uni-regensburg.de}
\email{lisa.schambeck@stud.uni-regensburg.de}
\email{matthias.uschold@ur.de}

\address{School of Mathematics and Physics, The University of Queensland, Brisbane QLD 4072, Australia}
\email{rhuaidi.burke@uq.edu.au}

\address{Mathematisches Institut, Universit\"at zu K\"oln, Weyertal 86-90, 50931 k\"oln, Germany}
\email{rchatt@math.uni-koeln.de, rchattmath@gmail.com}

\address{Department of Mathematics, Indian Institute of Technology Palakkad, Kerala, India}
\email{subhankardey@iitpkd.ac.in}

\address{Max Planck Institute for Mathematics, Bonn, Germany}
\email{galvin@mpim-bonn.mpg.de}
\email{truoel@mpim-bonn.mpg.de, paulagtruoel@gmail.com}

\address{Universit\"at Bonn, Regina-Pacis-Weg 3, Bonn, Germany}
\email{alexgarciarivas@hotmail.es}
\email{s6cahobo@uni-bonn.de}
\email{s6frkern@uni-bonn.de}
\email{s6shleee@math.uni-bonn.de }
\email{vesela@math.uni-bonn.de }
\email{s6meweis@math.uni-bonn.de}
\email{s6mmvonw@uni-bonn.de}

\address{Humboldt-Universit\"at zu Berlin, Rudower Chaussee 25, 12489 Berlin, Germany.}
\email{chun-sheng.hsueh@hu-berlin.de}
\email{naageswaran.manikandan@hu-berlin.de}
\email{mousseal@hu-berlin.de}
\email{suchodod@hu-berlin.de}
\email{annika.thiele@hu-berlin.de}

\address{Universidad de Sevilla, Dpto.\ de Álgebra,
Avda.\ Reina Mercedes s/n,
41012 Sevilla, Spain}
\email{mkegel@us.es, kegelmarc87@gmail.com}

\address{School of Mathematics and Statistics, University of Glasgow, United Kingdom}
\email{m.pencovitch.1@research.gla.ac.uk}
\email{mark.powell@glasgow.ac.uk}

\address{Institut f\"ur Mathematik IMa, Im Neuenheimer Feld 205, 69120 Heidelberg, Germany}
\email{jquintanilha@mathi.uni-heidelberg.de}

\address{Department of Applied Mathematics, Charles University, Prague, Czech Republic}
\email{tancer@kam.mff.cuni.cz}

\def\subjclassname{\textup{2020} Mathematics Subject Classification}
\expandafter\let\csname subjclassname@1991\endcsname=\subjclassname

\subjclass{
57K40; 
57K10, 
57R65. 
}
\keywords{4-manifolds, algorithms, intersection forms, Kirby--Siebenmann invariant, Kirby diagrams of topological 4-manifolds, stable classification}


\begin{abstract}
We show that there exists an algorithm that takes as input two closed, simply connected, topological $4$-manifolds and decides whether or not these $4$-manifolds are homeomorphic. In particular, we explain in detail how closed, simply connected, topological $4$-manifolds can be naturally represented by a Kirby diagram consisting only of $2$-handles. This representation is used as input for our algorithm. Along the way, we develop an algorithm to compute the Kirby--Siebenmann invariant of a closed, simply connected, topological $4$-manifold from any of its Kirby diagrams and describe an algorithm that decides whether or not two intersection forms are isometric. 

In a slightly different direction, we discuss the decidability of the stable classification of smooth manifolds with more general fundamental groups. Here we show that there exists an algorithm that takes as input two closed, oriented, smooth $4$-manifolds with fundamental groups isomorphic to a finite group with cyclic Sylow $2$-subgroup, an infinite cyclic group, or a group of geometric dimension at most $3$ (in the latter case we additionally assume that the universal covers of both $4$-manifolds are not spin), and decides whether or not these two $4$-manifolds are orientation-preserving stably diffeomorphic.
\end{abstract}

\maketitle
\markboth{\shorttitle}{\shorttitle}

\section{Introduction}
\addtocontents{toc}{\protect\setcounter{tocdepth}{1}}

It is a consequence of the resolution of the geometrization conjecture~\cite{perelman1,perelman2} that the homeomorphism problem is solved for closed, orientable manifolds of dimension less than or equal to~$3$, see for example~\cite{Kuperberg}. On the contrary, a well-known theorem due to Markov states that there exists no algorithm that takes as input two $4$-manifolds (presented as triangulations) and outputs in finite time whether or not these manifolds are homeomorphic~\cite{Markov} (see also~\cite{Stanko, Chernavsky,Kirby_Markovsthm,Gordon,Tancer}). The idea to prove this theorem (and related results for manifolds of dimension larger than $4$) is to produce from such a potential algorithm a solution to an undecidable problem in group theory. This crucially uses the fact that the fundamental group of the input manifolds can be isomorphic to any finitely presented group. Thus it seems natural to ask whether such algorithms exist if the isomorphism type of the fundamental groups of the input manifolds is fixed. The main result of this article is such an algorithm for simply connected $4$-manifolds.

\begin{reptheorem}{thm:main}[Abbreviated version]
	There exists an algorithm that 
    \begin{itemize}
        \item takes as input two closed, oriented, simply connected, topological $4$-manifolds $X$ and $X'$, presented as Kirby diagrams $($we refer to Section~\ref{sec:Kirby_into} for the explanation of what a Kirby diagram in this setting means$)$, and 
        \item outputs whether or not $X$ and $X'$ are orientation-preserving homeomorphic. 
    \end{itemize}
\end{reptheorem}

Our proof of \Cref{thm:main} is based on a careful step-by-step analysis of Freedman's classification~\cite{Freedman} of closed, oriented, simply connected, topological $4$-manifolds by the intersection form and the Kirby--Siebenmann invariant. In particular, we provide algorithms for computing and comparing these invariants from the input data.

\begin{remark}
    If $X$ and $X'$ are orientable but unoriented $4$-manifolds, and we wish to determine whether or not they are homeomorphic, it suffices to choose orientations on each, and then run our algorithm with input $(X,X')$ and $(X,-X')$. If at least one of the answers is yes, the manifolds are homeomorphic. If both answers are no, the manifolds are not homeomorphic. 
\end{remark}


\subsection{Kirby diagrams of simply connected topological 4-manifolds}\label{sec:Kirby_into}

Since there exist closed, oriented, simply connected, topological $4$-manifolds that admit neither triangulations nor smooth structures and thus do not admit a handle decomposition, it is a priori not clear how to input topological $4$-manifolds into an algorithm. 

To encode a closed, oriented, simply connected, topological $4$-manifold~$X$ as a finite data set, 
in \cref{sec:input} we introduce  the notion of a \emph{Kirby diagram} of $X$. For this, let $L$ be a framed link in $S^3$ with unimodular linking matrix. By attaching $4$-dimensional $2$-handles to $D^4$ along $L$, we obtain the diffeomorphism type of a compact, oriented, smooth $4$-manifold $W_L$ whose boundary $Y$ is an integral homology $3$-sphere. From Freedman's work~\cite{Freedman} it follows that there exists a contractible, compact, topological $4$-manifold $C$ with boundary $Y$, which we can glue to $W_L$ to obtain a topological $4$-manifold~$X_L$; see \Cref{fig:splitting_intro} for a schematic of the situation. The following theorem shows
that $X_L$ yields the homeomorphism type of a closed, oriented, simply connected, topological $4$-manifold and that, conversely, any such manifold arises via this construction.

\begin{reptheorem}{thm:topKirby}
    Let $L$ be a framed link with
    unimodular linking matrix.
    \begin{enumerate}
        \item Then $X_L$ is a closed, oriented, simply connected, topological $4$-manifold.
        \item The oriented homeomorphism type of $X_L$ only depends on the isotopy class of $L$.
        \item\label{item:topKirby-3} For every closed, oriented, simply connected, topological $4$-manifold $X$ there exists a framed link $L$ with unimodular linking matrix such that $X_L$ is orientation-preserving homeomorphic to $X$.
    \end{enumerate}
\end{reptheorem}

In the situation of \cref{thm:topKirby}~\eqref{item:topKirby-3}, we call $L$ a \textit{Kirby diagram} of the topological $4$-manifold $X$. 

\begin{figure}[htbp]
    \includegraphics{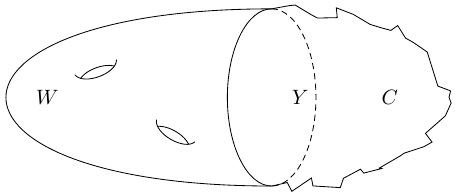}
    \caption{Splitting $X$ along $Y$, where $W$ is smooth and $C$ is contractible.}
    \label{fig:splitting_intro}
\end{figure}

\begin{remark}
    Our approach to present closed, oriented, simply connected, topological $4$-manifolds crucially uses the simple connectivity 
    of the manifold for obtaining the decomposition into a smooth piece and a contractible piece. On the other hand, there is also work by Freedman--Zuddas~\cite{Freedman_Zuddas} on presenting general compact (in particular non-simply connected) topological $4$-manifolds via finite data.
\end{remark}


\subsection{The Kirby--Siebenmann invariant}
The \textit{Kirby-Siebenmann invariant} $\ks(X)$ of a topological manifold $X$ (see \cite[Section 10.2B]{FQ} and
\cite[p.~318]{Kirby_Siebenmann}) 
is an element of $H^4(X;\Z/2)$. 
It vanishes if $X$ admits a smooth or a PL structure. Thus, if $X$ is a connected, closed,  topological $4$-manifold, the Kirby--Siebenmann invariant is either $0$ or $1$. In \cref{section:computing-Kirby-Siebenmann} we will use a theorem of Freedman--Kirby (see \cref{theorem:Freedman-Kirby}) to show that the Kirby--Siebenmann invariant is computable.

\begin{repproposition}{lem:Ks_algo}
   There exists an algorithm that 
   \begin{itemize}
       \item takes as input a framed link $L$ with unimodular linking matrix, and
       \item outputs the Kirby--Siebenmann invariant of $X_L$.
   \end{itemize}
\end{repproposition}


\subsection{Intersection forms}

One of the most important invariants in $4$-manifold theory is the intersection form. In \cref{section:background_interesection-forms} we will recall the background on the intersection form and in \cref{sec:int_form} we will see that the intersection form of a closed, oriented, simply-connected, topological $4$-manifold~$X_L$, presented by a framed link $L$, is in a preferred basis given by the linking matrix of $L$. In \cref{section:intersection-form-algorithm} we will present a purely algebraic algorithm to compare two such intersection forms.

\begin{repproposition}{prop:intersectionform}
  There exists an algorithm that 
   \begin{itemize}
       \item takes as input two integral, symmetric, unimodular matrices $V$ and $V'$, and 
       \item outputs whether or not these two matrices are congruent over $\Z$.
   \end{itemize} 
\end{repproposition}


\subsection{Algorithms for the stable classification of 4-manifolds}

We say that two smooth $4$-manifolds $X_1$ and $X_2$ are \textit{stably diffeomorphic} if there exist natural numbers $k_1$ and $k_2$ such that $X_1\#_{k_1}(S^2\times S^2)$ is diffeomorphic to $X_2\#_{k_2}(S^2\times S^2)$.  
Note that by \cite{Gompf1985} two closed, orientable, smooth $4$-manifolds are stably diffeomorphic if and only if they are stably homeomorphic. 
The classification of $4$-manifolds up to stable diffeomorphism is coarser, and hence easier to compute, 
which means that we can leave the realm of simply connected $4$-manifolds. In \cref{sec:stable_algo} we will use the classification of $4$-manifolds up to stable diffeomorphism to present the following two algorithms. 

\begin{reptheorem}{thm:mainAlmostSPinOrSpin}
    There exists an algorithm that 
    \begin{itemize}
        \item takes as input oriented triangulations of closed, oriented, smooth $4$-manifolds $X_1$ and $X_2$ such that either 
        \begin{enumerate}
            \item their fundamental groups are both isomorphic to the infinite cyclic group,
            \item or $X_1$ has a finite fundamental group with a cyclic Sylow $2$-subgroup and $X_2$ has a finite fundamental group, and
        \end{enumerate}
        \item outputs whether or not $X_1$ and $X_2$ are orientation-preserving stably diffeomorphic. 
    \end{itemize}
\end{reptheorem}

\begin{reptheorem}{thm:mainNon-Spin}
    There exists an algorithm that 
    \begin{itemize}
        \item takes as input oriented triangulations of closed, oriented, smooth $4$-manifolds $X_1$ and $X_2$, such that both universal covers $\widetilde{X}_1$ and $\widetilde{X}_2$ are not spin and such that
        \begin{enumerate}
            \item their fundamental groups are isomorphic and of homological dimension $\leq 3$,
            \item or their fundamental groups are both finite, and
        \end{enumerate}
        \item outputs whether or not $X_1$ and $X_2$ are orientation-preserving stably diffeomorphic. 
    \end{itemize}  
\end{reptheorem} 

\begin{remark}
    In the literature one also finds stable diffeomorphism defined with $k_1=k_2$. We could have also used this definition, but then would have had to require that $\chi(X_1) = \chi(X_2)$. Since the Euler characteristic is straightforward to algorithmically compute from a triangulation, \cref{thm:mainAlmostSPinOrSpin,thm:mainNon-Spin} also hold with the more rigid alternative definition of stable diffeomorphism. 
\end{remark}

\subsection{Open questions}

We end this introduction by highlighting some natural related questions.

In recent years, there have been several attempts to find the smallest (in terms of second homology) unrecognisable simply connected, closed, topological $4$-manifold.
The strongest result so far was obtained by Tancer~\cite{Tancer} who showed  that $M_9 = \sumStwo{9}$ is unrecognisable among closed, triangulated, topological $4$-manifolds. 
Earlier results of Shtan'ko~\cite{Stanko} 
and Gordon~\cite{Gordon} showed that $M_k$ is unrecognisable for $k=14$ and $k=12$, respectively; see also \cite{Chernavsky}. The recognisability of the simplest closed, topological $4$-manifold, the $4$-sphere $S^4$, is still unknown, \ie the following remains open, see for example~\cite{Weinberger,Kirby_Markovsthm,Gordon,Tancer}. 

\begin{question}\label{conj:S4_intro}
Does there exist an algorithm that 
    \begin{itemize}
        \item takes as input a triangulation of a closed $4$-manifold $X$, and 
        \item outputs whether or not $X$ is homeomorphic to $S^4$?
    \end{itemize}
\end{question}
    
On the other hand, one could ask for which other fundamental groups there is an algorithmic classification of $4$-manifolds with this fundamental group,
similar to \cref{thm:main}, \cref{thm:mainAlmostSPinOrSpin}, or \cref{thm:mainNon-Spin}. Similar to our approach in \Cref{thm:main}, one could fix a nontrivial group~$\pi$ and study the homeomorphism problem for 
closed, oriented, topological $4$-manifolds with fundamental group isomorphic to $\pi$; or with fundamental group in a well-understood subclass of finitely presented groups. For some groups $\pi$, complete sets of invariants have been found which classify closed, orientable, topological $4$-manifolds with fundamental group isomorphic to $\pi$, most notably $\Z$~\cite{FQ}, $\Z/n$~\cite{Hambleton-Kreck:1988-1,Hambleton-Kreck-93}, and solvable Baumslag--Solitar groups~\cite{HKT}. 
Conversely, it would be interesting to see if there exists a group such that there exists no algorithmic classification of $4$-manifolds with that fundamental group.

\begin{question}\label{ques:word_problem}
    Does there exist a finitely presented group $\pi$, such that there is \textbf{no} algorithm that
    \begin{itemize}
        \item takes as input two $4$-manifolds $X_1$ and $X_2$, presented as triangulations, with fundamental groups isomorphic to $\pi$, and
        \item outputs whether or not $X_1$ and $X_2$ are homeomorphic?
    \end{itemize}
\end{question}

It would also be interesting to study $4$-manifolds from an algorithmic viewpoint with respect to smooth aspects. Here we mention the following two problems.

\begin{question} \label{ques:smooth}
    Does there exist an algorithm that
    \begin{itemize}
        \item takes as input a topological $4$-manifold $X$ (if $X$ is closed, oriented, and simply connected, we can use a Kirby diagram of $X$ as input, otherwise we could use~\cite{Freedman_Zuddas}), and
        \item outputs whether or not $X$ admits a smooth structure?
    \end{itemize}
\end{question}

\begin{remark}
   A positive resolution of the $11/8$-conjecture (see~\cite[Page 16]{GS}) would imply a positive answer to \cref{ques:smooth} for simply connected manifolds.
\end{remark}

\begin{question} \label{ques:smooth2}
    Does there exist an algorithm that
    \begin{itemize}
        \item takes as input two smooth, homeomorphic $4$-manifolds $X_1$ and $X_2$, presented as triangulations, and
        \item outputs whether or not $X_1$ and $X_2$ are diffeomorphic?
    \end{itemize}
\end{question}

We remark that \cref{conj:S4_intro} is known to be false in dimensions larger than $4$~\cite{VKF}, whereas the analogues of \cref{ques:smooth,ques:smooth2} seem to be open in all dimensions larger than $4$. Question~\ref{ques:word_problem} and a theorem similar to \cref{thm:main} for \textit{PL} and \textit{smooth} manifolds of dimension $5$ or larger are mentioned in~\cite{Nabutovsky_Weinberger}, building upon the computability of higher homotopy groups~\cite{Brown}.

\subsection*{A note about algorithms}

We take a pragmatic approach to the notion of algorithms. (The underlying precise notion is in terms of Turing machines~\cite{boolos_burgess_jeffrey} or equivalent models of computation.) For each algorithm, we will explain how the input is represented as finite data. We will then describe the algorithms in standard mathematical language, giving enough details that the translation to a concrete algorithmic setting is evident.


\subsubsection*{Acknowledgements}
 This paper reports on the talks and discussion sessions during the workshop on \textit{Algorithms in $4$-manifold topology}~\cite{workshop}, hosted at Universit\"at Regensburg in September 2024, and organised by Stefan Friedl, Marc Kegel, and Birgit Tiefenbach. 
 The authors of this paper are the workshop participants. We are all grateful to the organisers for bringing us together, and to each other for helpful discussions and suggestions, and for an unmatched collaborative spirit. We thank Jonathan Bowden, Duncan McCoy, and Filip Misev for useful comments and discussion. 

\subsubsection*{Individual grant support}
We gratefully acknowledge financial support of the SFB~1085 \emph{Higher Invariants} (Universit\"at Regensburg, funded by the DFG, ID 224262486). 
Rima Chatterjee was supported by SFB/TRR 191 \textit{Symplectic Structures in Geometry, Algebra and Dynamics}, funded by the DFG (ID 281071066 - TRR 191).
Subhankar Dey was supported by an individual research grant of the DFG (ID 505125645).
Daniel Galvin and Paula Truöl would like to thank the \textit{Max Planck Institute for Mathematics in Bonn} for their hospitality and financial support.
Alejandro García Rivas was supported by MSCA-DN CaLiForNIA - 101119552.
Chun-Sheng Hsueh and Naageswaran Manikandan were supported by \textit{The Berlin Mathematics Research Center MATH+} funded by the DFG (EXC-2046/1, ID 390685689).
Marc Kegel is supported by the DFG, German Research Foundation, (Project: 561898308); by a Ram\'on y Cajal grant (RYC2023-043251-I) and the project PID2024-157173NB-I00 funded by MCIN/AEI/10.13039/501100011033, ESF+ and FEDER, EU; and by a VII Plan Propio de Investigación y Transferencia (SOL2025-36103) of the University of Sevilla.
Mark Powell was funded by EPSRC New Investigator grant \textit{Classifying $4$-manifolds} (EP/T028335/2). 
Martin Tancer was supported by the ERC-CZ project LL2328 of the Ministry of Education of Czech Republic.
Simona Veselá was supported by the DFG (EXC-2047/1, ID 390685813).

\subsubsection*{Open access}
For the purpose of open access, the authors have applied a CCBY license to this version, and will do the same to any author-accepted manuscript arising from this submission. 


\section{Intersection forms}\label{section:background_interesection-forms}

Recall that for a compact, oriented, connected $4$-manifold $X$, the \textit{intersection form} of $X$ is defined as 
\begin{align*} \lambda_X\colon H_2(X)\times H_2(X)&\longrightarrow  \Z\\
(\varphi,\psi)&\longmapsto  \langle\operatorname{PD}_X^{-1}(\varphi)\cup 
\operatorname{PD}_X^{-1}(\psi),[X]\rangle,\end{align*}
where $\operatorname{PD}_X \colon H^2(X,\partial X;\Z) \to H_2(X)$ is the Poincar\'e duality isomorphism given by capping with the fundamental class~$[X] \in H_4(X,\partial X)$ determined by the orientation.  
Note that the intersection form is symmetric and bilinear.
It follows from the Hurewicz theorem, Poincar\'e duality, and the universal coefficient theorem that if $X$ is simply connected, then $H_2(X)$ is torsion-free.
Finally note that it follows from Poincar\'e duality that 
the intersection form is unimodular if and only if 
 $X$ is closed  or if  $\partial X$  is an integral homology $3$-sphere. 

If we choose a basis of $H_2(X)$, which induces an isomorphism $f\colon H_2(X)\rightarrow \Z^m$, then under this isomorphism the intersection form can be represented by an integral, symmetric, unimodular\footnote{Here we say that a matrix is \textit{integral} if it has only integer coefficients and such a matrix is called \textit{unimodular} if it is a square matrix and has determinant $\det(V)=\pm1$.} matrix~$V$, via 
\begin{align*}
    \lambda_X\colon \Z^m \times \Z^m&\longrightarrow \Z\\
    (a,b)&\longmapsto a^TVb.
\end{align*}
Conversely, any symmetric $(m\times m)$-matrix $V$ with integral entries and determinant $\det(V)=\pm1$ represents a unimodular, symmetric bilinear form $\Z^m\times \Z^m\rightarrow \Z$ via the above formula. Two such matrices $V$ and $V'$ represent isometric bilinear forms if and only if they are congruent over~$\Z$, i.e.\ there exists an integral, unimodular matrix $P$ such that $V=P^TV'P$. 

The following theorem of  Freedman~\cite{Freedman} implies in particular that every symmetric, unimodular, bilinear form over the integers is isometric to the intersection form of some closed, oriented, simply connected, topological $4$-manifold.

\begin{theorem}[{Freedman~\cite{Freedman}}]\label{thm:freedman}
\mbox{}
\begin{enumerate}
\item Let $\lambda$ be a symmetric, unimodular, bilinear form over $\Z$ and let $k\in \Z/2$. 
$($If $\lambda$ is even, then we assume that $k\equiv \frac{1}{8}\cdot \operatorname{sign}(\lambda)\, \operatorname{mod} 2$.$)$
There exists a closed, oriented, simply connected, topological $4$-manifold $X$ whose intersection form is isometric to $\lambda$ and with Kirby--Siebenmann invariant $\operatorname{ks}(X)=k\in \Z/2$.
\item Let $X$ and $Y$ be two closed, oriented, simply connected, topological $4$-manifolds
with equal Kirby--Siebenmann invariants $\operatorname{ks}(X)=\operatorname{ks}(Y)\in \Z/2$. If $f\colon ({H}_2(X),\lambda_X)\to ({H}_2(Y),\lambda_Y)$ is an isometry of the intersection forms, then there exists an orientation-preserving homeomorphism
$h\colon X\to Y$ such that $h_*=f$. Furthermore, $f$ is unique up to isotopy.\qed
\end{enumerate}
\end{theorem}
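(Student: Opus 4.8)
This is Freedman's classification theorem, which in the present paper is taken as a black box; I sketch the plan of its proof. For part~(1), grant for the moment that every integral homology $3$-sphere bounds a contractible compact topological $4$-manifold --- itself a consequence of the machinery discussed below. Then a manifold realizing $\lambda$ is produced exactly by the $X_L$ construction of \cref{thm:topKirby}, which rests on precisely this input: choose a framed link $L$ in $S^3$ whose linking matrix represents $\lambda$ (any symmetric integral matrix arises this way, by clasping an unlink, and $\partial W_L$ is an integral homology sphere because $\lambda$ is unimodular) and set $X = X_L$. It then remains to control the Kirby--Siebenmann invariant. If $\lambda$ is even, $X$ is spin and $\ks(X) = \tfrac{1}{8}\sign(\lambda) \bmod 2$ is forced --- the topological refinement of Rokhlin's theorem --- which is exactly why that case carries the stated hypothesis on $k$; if $\lambda$ is odd, both values of $k$ are realized, because connect-summing with the Chern manifold, a fake $\CP^2$ with $\ks = 1$, toggles $\ks$ without changing the intersection form.

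For part~(2), I would first observe that an isometry $f$ of intersection forms is induced by an orientation-preserving homotopy equivalence $X \to Y$, since for closed simply connected $4$-manifolds the oriented homotopy type is determined by the intersection form (Milnor--Whitehead). The remaining task is to upgrade this homotopy equivalence to a homeomorphism, which is the province of topological surgery in dimension~$4$: one makes $f$ into a degree-one normal map, whose obstruction to being improved to a homeomorphism is measured by the difference of signatures --- zero, as $X$ and $Y$ are homotopy equivalent --- together with a $\Z/2$-class equal to $\ks(X) - \ks(Y)$, which vanishes by hypothesis. One then performs surgery on the resulting normal cobordism: below the middle dimension this is routine, while in the middle dimension one must represent $2$-dimensional homology classes by flatly embedded $2$-spheres and then cancel them in pairs, which reduces to finding embedded Whitney disks.

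The hard part, where essentially all of the difficulty lies, is the \emph{disk embedding theorem}: in a simply connected topological $4$-manifold, a continuous map of a $2$-disk whose self-intersections and intersections with a framed dual sphere are algebraically trivial is homotopic rel boundary to a flat embedding. The plan here is Freedman's: thicken an immersed disk to a Casson handle --- an infinite tower of kinky handles and Whitney disks --- and then prove, through decomposition-space theory, Bing-type shrinking arguments, and a reimbedding theorem that produces arbitrarily deep subtowers, that every Casson handle is homeomorphic to the standard open $2$-handle $D^2 \times \R^2$. Granting this, the middle-dimensional surgery of part~(2) goes through, producing a topological $s$-cobordism from $X$ to $Y$; the $5$-dimensional topological $s$-cobordism theorem (again a consequence of the disk embedding theorem, as are the $4$-dimensional Poincar\'e conjecture and the fact that homology $3$-spheres bound contractible topological $4$-manifolds used in part~(1)) then yields a homeomorphism $h$ with $h_* = f$. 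Uniqueness of the realizing homeomorphism up to isotopy finally follows from Quinn's topological isotopy theorem in dimension~$4$, which implies that a self-homeomorphism of $X$ inducing the identity on $H_2$ is isotopic to the identity.
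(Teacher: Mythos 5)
The paper does not actually prove this statement: it is stated with a citation to Freedman~\cite{Freedman} and closed immediately with \qed, i.e.\ it is invoked as a black box. You correctly recognised this, so your sketch is not an alternative to a proof in the paper but a summary of what lies behind the citation. Its broad strokes are accurate: realising the form by attaching $2$-handles along a framed link with unimodular linking matrix and capping off with a contractible topological $4$-manifold; the Rokhlin/Freedman--Quinn constraint $\ks \equiv \tfrac{1}{8}\sigma \pmod 2$ in the spin case; the disc embedding theorem, Casson handles and decomposition-space shrinking as the engine; surgery and the $5$-dimensional topological $s$-cobordism theorem for uniqueness; and Quinn's isotopy theorem for the final claim about $h$.

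One genuine slip: connect-summing with the Chern manifold $*\CP^2$ does \emph{not} preserve the intersection form --- it adds a $\langle 1\rangle$ summand. Toggling $\ks$ on a manifold with odd form while keeping the form fixed needs more care; the paper's own version of this move, used in the proof of \cref{lem:existence}, is Freedman's trick of tying a trefoil into a component of a characteristic sublink (Freedman, p.~371). Your description of the surgery obstruction in part~(2) as a single $\Z/2$-class equal to $\ks(X)-\ks(Y)$ is also compressed: the normal-invariant analysis actually takes place in $[Y,\G/\TOP]$, whose degree-$2$ part is $H^2(Y;\Z/2)$ rather than $\Z/2$, and one must use the freedom in choosing the normal data. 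Neither of these affects the paper, which leaves the theorem entirely to the literature, but they are the places a careful reader would push back on the sketch.
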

 For more background on intersection forms on $4$-manifolds we refer for example to~\cite[Chapter 1.2]{GS}, \cite[Chap. II]{Kirby}, \cite[Chap. III]{Scorpan}, or \cite[Chap. II, \S 2.7]{Prasolov}.


\section{Kirby diagrams of topological 4-manifolds}\label{sec:input}

Natural ways to present compact, smooth $4$-manifolds are via triangulations or handle decompositions (Kirby diagrams). However, it is known that a compact $4$-manifold admits a triangulation if and only if it admits a handle decomposition, which in turn is equivalent to 
admitting a smooth structure~\cite{Whitehead,Hirsch_Mazur, munkres_1, munkres_2, cerf}, cf.\ \cite[Theorem 8.3B]{FQ}.
So if we want to describe a topological $4$-manifold (that possibly carries no smooth structure), we cannot use any of the above presentation methods. In this section, we explain how to specify a closed, oriented, simply connected, topological $4$-manifold $X$ by a Kirby diagram (even if it does not carry a smooth structure and thus does not admit a handle decomposition). The idea is to decompose $X$ as $X=W\cup_Y C$, where $W$ is a codimension-zero submanifold of $X$ that does carry a smooth structure and which can be described by a Kirby diagram and  $C$ is a contractible topological $4$-manifold such that the intersection $Y=W \cap C$ is an integral homology $3$-sphere.  In the following, we make this precise.

\begin{definition}\label{def:framedlink}
A \emph{framed link} is a closed,  oriented, smooth, $1$-dimensional submanifold $L = L_1 \sqcup \cdots \sqcup L_m$ in~$S^3$ with ordered components, together with a $\Z$-label of each component~$L_i$, called its \textit{framing}. 
\end{definition}

This framing corresponds to a homotopy class of trivialisations of the normal bundle of~$L_i$, for each~$i$. Each component of~$L$ has a closed tubular neighbourhood~${\nu} L_i$, and using the framing we obtain the isotopy class of a diffeomorphism from $S^1 \times D^2$ to ${\nu} L_i$ (using the standard convention that the $0$-framing corresponds to the Seifert framing of $L_i$), we refer for example to~\cite{GS} for details. 

\begin{remark}\label{rem:linkrep}
  In algorithms, framed links will be represented by $\Z$-labelled, oriented, ordered link diagrams. There are several known ways to represent link diagrams (up to planar isotopy), for example via PD codes~\cite{knotatlas,Mast}, DT codes~\cite{dt_codes}, Gau\ss\ codes~\cite{KnotInfo}, braid words~\cite{Artin}, or isosignatures~\cite{regina}.
\end{remark}

\begin{definition}\label{def:unimodular} \hfill
\begin{enumerate}
\item 
Let $L$ be a framed link with components $L_1\sqcup\cdots\sqcup L_m$ and framings $f_1,\ldots,f_m$. Then the \textit{linking matrix} $V=(v_{ij})_{1\leq i,j\leq m}$ of $L$ is defined by $v_{ii}:= f_i$ and $v_{ij}:=  \lk(L_i,L_j)$ if $i\neq j$. 
\item 
  A \emph{framed, unimodular link} is an oriented, framed link whose 
  linking matrix~$V$ is unimodular.
  \end{enumerate}
\end{definition}

\begin{remark} \label{rem:W_L} We make a few remarks about this definition.
\begin{enumerate}
    \item The linking matrix of a framed link~$L$ can be computed algorithmically from each diagram of~$L$ (see the proof of \cref{lem:computing_intersection_from}). Since the determinant of a matrix with integral coefficients is computable, it is algorithmically decidable whether a given framed link is unimodular. 
    \item Any framed link $L$ determines the diffeomorphism type of a compact, oriented, simply connected, smooth $4$-manifold $W_L$, obtained by attaching $2$-handles to $D^4$ along~$L$. Then $H_2(W)$ admits a preferred basis, whose $i$-th element is represented by the core of the $2$-handle attached to the $i$-th component~$L_i$, union a cone on $L_i$ in $D^4$. In this basis, the intersection form~$\lambda_{W_L}$ is represented by the linking matrix of $L$. It follows that if the framed link~$L$ is unimodular, then the intersection form $\lambda_{W_L}$ is unimodular, and thus $\partial W_L$ is an integer homology $3$-sphere. We refer to~\cite[Chapter 4]{GS} for more details.
\end{enumerate}
\end{remark}

Next, we observe that from a framed, unimodular link we can build a topological $4$-manifold.

\begin{definition}\label{def:top_Kirby}
Let $L$ be a framed, unimodular link. We use it to build a topological $4$-manifold~$X_L$ as follows.
    \begin{enumerate}
    \item We choose a compact, oriented, simply connected, smooth $4$-manifold~$W_L$ obtained by attaching $2$-handles to $D^4$ along~$L$ as explained in \cref{rem:W_L}.
    \item The boundary $\partial W_L$ of $W_L$ is an integer homology $3$-sphere and thus, by Freedman~\cite[Theorem~1.4']{Freedman}, there exists a contractible, compact, oriented, topological $4$-manifold $C$ whose boundary is orientation-reversing homeomorphic to $\partial W_L$. 
    \item We choose an orientation-reversing homeomorphism $h\colon \partial W_L\to \partial C$ and use this homeomorphism to glue $W_L$ and $C$, i.e.\ we build 
    \begin{equation*}
        X_L:=W_L\cup_h C.
    \end{equation*}
\end{enumerate}
\end{definition}

The following theorem, which is the main result of this section, states among other things that the homeomorphism type of $X_L$ is independent of all choices made throughout this construction. 

\begin{theorem}\label{thm:topKirby}
    Let $L$ be a framed, unimodular link.
    \begin{enumerate}
        \item $X_L$ is a closed, oriented, simply connected, topological $4$-manifold.
        \item The oriented homeomorphism type of $X_L$ only depends on the isotopy class of $L$ as framed link.
        \item For every closed, oriented, simply connected, topological $4$-manifold $X$ there exists a framed, unimodular link $L$ such that $X_L$ is orientation-preserving homeomorphic to $X$.
    \end{enumerate}
\end{theorem}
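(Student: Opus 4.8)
\textbf{Proof proposal for \cref{thm:topKirby}.} The plan is to verify the three claims essentially by unwinding the construction and invoking Freedman's results, with most of the work going into the well-definedness statement~(2).

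First, for claim~(1): since $W_L$ is obtained by attaching $2$-handles to $D^4$, it is compact, oriented, simply connected and smooth, with $\partial W_L$ an integral homology $3$-sphere by \cref{rem:W_L}. The manifold $C$ is contractible with boundary orientation-reversing homeomorphic to $\partial W_L$, so $X_L = W_L \cup_h C$ is closed; it inherits an orientation because $h$ is orientation-reversing; and a Mayer--Vietoris / van Kampen argument using that $C$ is simply connected, $\partial W_L$ is a homology sphere (hence $\pi_1$-surjecting onto nothing problematic — more precisely, $\pi_1(\partial W_L)$ is perfect, and van Kampen gives $\pi_1(X_L)$ as a quotient of $\pi_1(W_L) = 1$) shows $X_L$ is simply connected. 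So claim~(1) is routine.

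For claim~(2), the key point is to show that neither the choice of smooth $4$-manifold $W_L$ (among those built from $L$), nor the choice of contractible filling $C$, nor the choice of gluing homeomorphism $h$, affects the oriented homeomorphism type of $X_L$. The standard strategy here is to compute the invariants of $X_L$ and appeal to Freedman's classification \cref{thm:freedman}~(2). Concretely: $X_L$ is closed, oriented, simply connected; its intersection form $\lambda_{X_L}$ is isometric to that of $W_L$ (since $C$ is contractible, $H_2(C) = 0$ and a Mayer--Vietoris argument identifies $H_2(X_L) \cong H_2(W_L)$ compatibly with the intersection pairing), which in turn is the linking matrix of $L$ by \cref{rem:W_L} — in particular this depends only on the isotopy class of $L$. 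It remains to control the Kirby--Siebenmann invariant: since $\ks$ is additive under gluing along homology spheres and $\ks(C) = 0$ (as $C$ admits... — actually $C$ need not be smooth, so instead use that $\ks(X_L) = \ks(W_L) + \ks(C)$ as elements of $\Z/2$ via the sum formula, and $W_L$ is smooth so $\ks(W_L) = 0$, while $\ks(C) \in H^4(C,\partial C;\Z/2) = 0$ since $C$ is contractible and $\partial C$ is a homology sphere — so the relative group vanishes). Hence $\ks(X_L) = 0$ always, independent of all choices. Now given two manifolds $X_L, X_L'$ built from isotopic framed links $L \simeq L'$, they have isometric intersection forms and equal ($=0$) Kirby--Siebenmann invariants, so by \cref{thm:freedman}~(2) there is an orientation-preserving homeomorphism between them.

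For claim~(3): given a closed, oriented, simply connected, topological $4$-manifold $X$, its intersection form $\lambda_X$ is symmetric and unimodular over $\Z$ (as noted in \cref{section:background_interesection-forms}). Realise $\lambda_X$ as the linking matrix of some framed link $L$ — every symmetric unimodular integral matrix is the linking matrix of a framed link, e.g.\ by realising the matrix via a plumbing/linking construction on a collection of circles (take unknots with the diagonal entries as framings and add clasps/linkings realising the off-diagonal entries). Then $X_L$ is closed, oriented, simply connected with intersection form $\lambda_X$ and $\ks(X_L) = 0$. If also $\ks(X) = 0$, then $X_L \cong X$ (orientation-preservingly) by \cref{thm:freedman}~(2) as in claim~(2). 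The remaining case $\ks(X) = 1$ forces $\lambda_X$ to be even (by \cref{thm:freedman}~(1), an odd form is realised only with $\ks = 0$, and more to the point a topological $4$-manifold with odd intersection form has $\ks = 0$ since it is homeomorphic to a connected sum of $\CP^2$'s and $\ol{\CP^2}$'s — alternatively invoke that $\ks$ is determined by the form when the form is odd). When $\lambda_X$ is even with $\ks(X) = 1$, we need a framed unimodular link $L$ with $X_L \cong X$; here $X_L$ as constructed has $\ks = 0 \neq \ks(X) = 1$, so this naive approach fails and this is the main obstacle. The fix is to choose $L$ more cleverly: instead of filling $\partial W_L$ with a contractible $C$, we should allow $W_L$ itself to be built so that $X \setminus (\text{open contractible set})$ is smoothable — concretely, by Freedman--Quinn every closed simply connected topological $4$-manifold minus a point is smoothable, so $X = W \cup_Y C$ with $W$ smooth, $Y$ a homology sphere bounding contractible $C$, and $W$ admits a handle decomposition hence a Kirby diagram $L$; then by the uniqueness in claim~(2), $X_L \cong X$. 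One checks that if $X$ has $\ks(X) = 1$ this $W$ has $\ks(W) = 1$ as well — but $W$ is smooth, contradiction. So in fact $\ks(X) = 1$ is impossible once we know $X$ minus a point is smooth with homology-sphere-bounding complement; the resolution is that the complement of the contractible piece need not have homology sphere boundary in general, but can be arranged to by handle trading. Thus the crux of claim~(3) — and the step I expect to be genuinely delicate — is showing that every closed, oriented, simply connected topological $4$-manifold decomposes as $W_L \cup_Y C$ with $Y$ an integral homology sphere bounding a contractible $C$ and $W_L$ a $2$-handlebody; I would carry this out by starting from Freedman--Quinn smoothing of $X \setminus \{\pt\}$, taking a handle decomposition, and using handle cancellation/trading to reduce to a decomposition with only $2$-handles in $W$ and the remainder contractible, then citing Freedman for the existence of the contractible filling.
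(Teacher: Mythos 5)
Your argument for claims~(2) and~(3) contains a genuine error that makes them fail as written. You assert that $\ks(X_L)=0$ always, deducing this from $\ks(C)\in H^4(C,\partial C;\Z/2)=0$. But this last group does \emph{not} vanish: by Lefschetz duality, for a compact contractible $4$-manifold $C$ we have $H^4(C,\partial C;\Z/2)\cong H_0(C;\Z/2)\cong\Z/2$. (You appear to have conflated this with the absolute group $H^4(C;\Z/2)$, which is indeed trivial.) In fact, as shown in \cref{theorem:Freedman-Kirby}, $\ks(X_L)$ equals the Rochlin invariant $\mu(\partial W_L)$ of the splitting homology sphere, which is certainly nonzero for some framed, unimodular links $L$ (take $L$ so that $\partial W_L$ is the Poincar\'e sphere). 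So the construction does produce manifolds with $\ks=1$, and your inference that it cannot is wrong. You also have the parity of Freedman's theorem backwards: by \cref{thm:freedman}(1), \emph{odd} forms can be realised with either value of $\ks$, while for \emph{even} forms $\ks$ is forced to be $\sigma/8\pmod 2$, not the other way around. With $\ks(X_L)$ not a priori zero, your route to~(2) via comparing invariants is not actually carried out, and your treatment of~(3) — which hinges on the supposed contradiction when $\ks(X)=1$ — collapses; the final pivot toward decomposing $X$ directly and handle-trading to a $2$-handlebody is a plausible sketch but is not established.

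For contrast, the paper's proofs are structured so that these issues never arise. For~(2) it avoids Freedman's classification and any $\ks$ computation altogether: starting from the diffeomorphism $g^W\colon W_L\to W'_L$ coming from the fact that the handle attachment is determined by the framed isotopy class of $L$, it glues $C\cup -C'$ along $h'\circ g^W\circ h^{-1}$, verifies this is a simply connected $4$-manifold with trivial $H_2$ hence homeomorphic to $S^4$, and then uses the $h$-cobordism $(D^5;C,C')$ rel boundary (all three pieces being contractible) to extend $g^W$ to a homeomorphism $X_L\to X'_L$. This also yields the stronger, basis-compatible statement that is used later in the paper. For~(3), the paper first picks any $L$ with linking matrix $\lambda_X$, and then — rather than hoping $\ks$ comes out right — uses Freedman's trefoil-insertion trick (tying a trefoil into a component of the characteristic sublink) to toggle $\ks(X_L)$ while preserving the intersection form, so that both invariants match $X$; Freedman's classification then finishes the proof.
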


\begin{definition}
 In the situation of \cref{thm:topKirby}(3), we say that $L$ is a \textit{Kirby diagram} of the topological $4$-manifold $X$. We call~$W_L$ the \textit{handle part}, $C$ the \textit{contractible part}, and $Y:=\partial C=\partial{W_L}$ the \emph{splitting $3$-manifold} of~$X_L$.   
\end{definition}

 We refer to \cref{fig:splitting_intro} for a schematic sketch of the situation. 

\begin{remark}
     We make a comment about the justification of the term \textit{Kirby diagram} for the above construction. Recall from \cref{rem:W_L} that a framed link $L$ determines the diffeomorphism type of a compact, oriented, smooth $4$-manifold $W_L$ by attaching $2$-handles along $L$ to $D^4$. 
     
     If the boundary $\partial W_L$ is diffeomorphic to $\#_m (S^1\times S^2)$ then we can glue in a copy of $\natural_m (S^1\times D^3)$ to obtain a closed, oriented, smooth $4$-manifold $W$. A result by Laudenbach--Poenaru~\cite{LP} implies that the diffeomorphism type of this closed $4$-manifold $W$ is independent of the gluing map and thus $L$ is called Kirby diagram of $W$. 

    If the boundary $\partial W_L$ is diffeomorphic to an integral homology $3$-sphere we can glue in a contractible topological $4$-manifold to obtain a closed manifold. We can use Freedman~\cite{Freedman} to show that the homeomorphism type of that closed $4$-manifold is independent of that gluing map. Thus the construction is essentially the same, and the name \textit{Kirby diagram} for this topological $4$-manifold seems to be justified.
\end{remark}

\cref{thm:topKirby} is proven via the following sequence of lemmas.

\begin{lemma} \label{lem:simply_connected}
    Let $L$ be a framed, unimodular link. Then $X_L$ is a closed, oriented, simply connected, topological $4$-manifold.
\end{lemma}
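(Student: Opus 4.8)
The plan is to verify the three asserted properties of $X_L$ directly from its construction as $X_L = W_L \cup_h C$, where $W_L$ is the smooth $2$-handlebody on $L$ and $C$ is a contractible, compact, oriented, topological $4$-manifold with $\partial C$ orientation-reversing homeomorphic to $\partial W_L$. Since $C$ is only a topological manifold, $X_L$ is in general only a topological manifold; but gluing two oriented manifolds along an orientation-reversing homeomorphism of their boundaries produces a closed oriented topological manifold, so closedness and orientability are immediate. The two points requiring a small argument are simple connectivity and, later (in the subsequent lemmas of the section), independence of the choices and surjectivity onto all closed, oriented, simply connected topological $4$-manifolds. For the present Lemma the only nontrivial point is simple connectivity.

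First I would set up the van Kampen decomposition. Take open neighbourhoods $U \supset W_L$ and $U' \supset C$ obtained by thickening a collar of the gluing region $Y = \partial W_L = \partial C$, so that $U \simeq W_L$, $U' \simeq C$, $U \cup U' = X_L$, and $U \cap U' \simeq Y \times (-1,1) \simeq Y$. By Seifert--van Kampen, $\pi_1(X_L)$ is the pushout of $\pi_1(W_L) \leftarrow \pi_1(Y) \rightarrow \pi_1(C)$. Since $C$ is contractible, $\pi_1(C) = 1$, so the pushout collapses to the quotient of $\pi_1(W_L)$ by the normal subgroup generated by the image of $\pi_1(Y) \to \pi_1(W_L)$. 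Hence it suffices to show that $W_L$ itself is simply connected, since then $\pi_1(X_L)$ is a quotient of the trivial group.

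The simple connectivity of $W_L$ is the standard Kirby-calculus fact: $W_L$ is built from $D^4$ by attaching only $1$-handles and $2$-handles, and here (since $L$ is a link in $S^3$, with no $1$-handles) only $2$-handles. Building a CW-type model, $D^4$ is simply connected and attaching $2$-handles (which are attached along circles, i.e.\ along $S^1 \times D^2$) can only add relations to $\pi_1$, never generators; more precisely $W_L$ is homotopy equivalent to a wedge of $m$ $2$-spheres in the unimodular case, but for the present purpose it is enough to note $W_L \simeq D^4 \cup_{L} (\text{2-cells})$ up to homotopy, so $\pi_1(W_L) = \pi_1(D^4)/(\text{relations}) = 1$. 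This is recorded in \cite[Chapter 4]{GS}. Combining with the van Kampen computation gives $\pi_1(X_L) = 1$, completing the proof.

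The main obstacle, such as it is, is purely a matter of care rather than depth: one must make sure the open sets $U, U'$ in the van Kampen argument are genuinely open with the right homotopy types — this uses the existence of a bicollar of the (locally flat, indeed bicollared-by-construction) separating hypersurface $Y$ in $X_L$, which is automatic here since $Y$ is glued in with a product collar on each side. No wildness issues arise because the collar structure is built into \cref{def:top_Kirby}. Thus the proof is a short assembly of: (i) orientable closed topological manifold from boundary-gluing, (ii) van Kampen with a contractible piece, (iii) $\pi_1(W_L) = 1$ from the handle structure.
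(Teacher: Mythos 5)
Your proof is correct and takes essentially the same approach as the paper: decompose $X_L = W_L \cup_h C$, observe that gluing compact oriented manifolds along an orientation-reversing boundary homeomorphism yields a closed oriented manifold, and then apply Seifert--van Kampen using the simple connectivity of both $W_L$ (built from $D^4$ by $2$-handle attachments only) and the contractible piece $C$. You spell out the van Kampen open-set setup and the $\pi_1(W_L)=1$ step in more detail than the paper, which states the same conclusion in two sentences, but the underlying argument is identical.
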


\begin{proof}
    Since $X_L = W_L \cup_h C$ consists of two compact, oriented, simply connected $4$-manifolds glued along their common boundary by an orientation-reversing homeomorphism, $X_L$ is a closed, oriented $4$-manifold. The Seifert--van Kampen theorem implies that $X_L$ is also simply connected.
\end{proof}

\begin{lemma}\label{lemma-inclusion-W-iso-on-H2}
    Let $L$ be a framed, unimodular link. Then the inclusion map $\iota \colon W_L\hookrightarrow X_L = W_L \cup_h C$ induces an isomorphism $\iota_* \colon H_2(W_L)\to H_2(X_L)$. Thus the handle part $W_L$ determines a preferred isomorphism $f \colon H_2(X_L) \to \Z^m$. 
\end{lemma}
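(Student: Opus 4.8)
The plan is to use the Mayer--Vietoris sequence for the decomposition $X_L = W_L \cup_h C$, together with the fact that $C$ is contractible and $Y = \partial W_L$ is an integral homology $3$-sphere. Write $Y = W_L \cap C$ (more precisely, a collar neighbourhood of $Y$ on each side gives an open cover, but homologically this is what we use). Since $C$ is contractible, $H_k(C) = 0$ for all $k > 0$ and $H_0(C) = \Z$. Since $Y$ is an integral homology $3$-sphere, $H_0(Y) = H_3(Y) = \Z$ and $H_1(Y) = H_2(Y) = 0$. Plugging these into the Mayer--Vietoris sequence
\[
H_2(Y) \longrightarrow H_2(W_L) \oplus H_2(C) \longrightarrow H_2(X_L) \longrightarrow H_1(Y)
\]
gives $0 \to H_2(W_L) \oplus 0 \to H_2(X_L) \to 0$, so the map $H_2(W_L) \to H_2(X_L)$ is an isomorphism. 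I would check that this map is indeed induced by the inclusion $\iota \colon W_L \hookrightarrow X_L$: the Mayer--Vietoris map $H_2(W_L) \oplus H_2(C) \to H_2(X_L)$ is $(a,b) \mapsto \iota_*(a) - j_*(b)$ where $j \colon C \hookrightarrow X_L$, and since $H_2(C) = 0$ the restriction to the first summand is exactly $\iota_*$.

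Having established that $\iota_* \colon H_2(W_L) \to H_2(X_L)$ is an isomorphism, the preferred isomorphism $f \colon H_2(X_L) \to \Z^m$ is obtained by composing $\iota_*^{-1}$ with the preferred basis of $H_2(W_L)$ recalled in \cref{rem:W_L}: namely, the $i$-th basis element is represented by the core of the $2$-handle attached along $L_i$ union a cone on $L_i$ in $D^4$. Explicitly, $f = (\text{coordinates w.r.t.\ this basis}) \circ \iota_*^{-1}$.

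The proof is essentially routine; the only point requiring a little care is the first sentence, i.e.\ arranging an honest open cover of $X_L$ by sets deformation-retracting onto $W_L$ and $C$ with intersection deformation-retracting onto $Y$, so that Mayer--Vietoris applies. This is standard collar-neighbourhood bookkeeping (thicken $Y$ to $Y \times (-1,1)$ inside $X_L$ and enlarge $W_L$ and $C$ accordingly), and I would dispatch it in one line. No genuine obstacle is expected here.
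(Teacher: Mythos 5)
Your proof is correct and takes essentially the same approach as the paper: the Mayer--Vietoris sequence for $X_L = W_L \cup_Y C$, using that $H_2(Y)$, $H_1(Y)$, and $H_2(C)$ all vanish. The paper states this more tersely but the argument is identical, including the identification of the preferred basis from \cref{rem:W_L}.
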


\begin{proof}
    The first claim follows from the Mayer--Vietoris sequence of $X_L = W_L \cup_Y C$, using that $H_2(Y)$, $H_1(Y)$, and $H_2(C)$ are all trivial groups. As explained in \cref{rem:W_L} there exists a preferred isomorphism $H_2(W_L)\to\Z^m$ which thus induces a preferred isomorphism $f \colon H_2(X_L) \to \Z^m$.
\end{proof}

\begin{lemma}\label{lem:uniqueness}
    Let $L$ be a framed, unimodular link. If $X_L = W_L \cup_h C$ and $X'_L= W'_L \cup_{h'} C'$ are two topological $4$-manifolds constructed as in \cref{def:top_Kirby}, then there exists an orientation-preserving homeomorphism $g \colon X_L \to X'_L$. Moreover, if we denote by $f \colon H_2(X_L) \to \Z^m$  and $f' \colon H_2(X'_L) \to \Z^m$ the preferred isomorphisms constructed in \cref{lemma-inclusion-W-iso-on-H2}, we can choose $g$ such that 
    $f' \circ g_*\circ f^{-1} = \Id_{\Z^m}$. 
\end{lemma}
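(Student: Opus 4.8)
The plan is to produce $g$ in two steps: first compare the handle parts $W_L$ and $W_L'$, and then, after normalising so that they coincide, compare the contractible parts $C$ and $C'$. Throughout, ``preferred basis'' refers to the basis of $H_2$ of a handle part described in \cref{rem:W_L}. \emph{Step 1 (handle parts).} Both $W_L$ and $W_L'$ are built from $D^4$ by attaching $2$-handles along the framed link $L\subset S^3=\partial D^4$; the only choices are the tubular neighbourhoods of the components of $L$ and the parametrisations determined by the framings, and any two such choices differ by an ambient isotopy of $S^3$. By the isotopy extension theorem this isotopy extends to a diffeomorphism of $D^4$ taking one system of attaching regions to the other, and that diffeomorphism extends over the $2$-handles. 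One obtains an orientation-preserving diffeomorphism $\phi\colon W_L\to W_L'$ matching the $2$-handles in the given order and sending cores to cores; hence $\phi_*\colon H_2(W_L)\to H_2(W_L')$ is the identity in the preferred bases. Extending $\phi$ by the identity on $C$ gives an orientation-preserving homeomorphism from $X_L=W_L\cup_h C$ to $W_L'\cup_{h\circ(\phi|_{\partial})^{-1}}C$. Replacing $X_L$ by this manifold, we may assume $W_L=W_L'=:W$, that $f$ and $f'$ are both induced from the same preferred basis of $H_2(W)$, and that it remains to prove $W\cup_h C\cong W\cup_{h'}C'$ via a homeomorphism equal to the identity on $W$, where now $h\colon\partial W\to\partial C$ and $h'\colon\partial W\to\partial C'$ are two orientation-reversing homeomorphisms.

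\emph{Step 2 (contractible parts).} Let $\beta:=h'\circ h^{-1}\colon\partial C\to\partial C'$; since $h$ and $h'$ both reverse orientation, $\beta$ preserves orientation. The crucial point is that $\beta$ extends to an orientation-preserving homeomorphism $\Psi\colon C\to C'$. Both $C$ and $C'$ are contractible, so they are homotopy equivalent rel boundary compatibly with $\beta$ and have trivial intersection forms; moreover, gluing $C$ to $-C'$ along $\beta$ yields a closed, simply connected topological $4$-manifold with the homology of $S^4$, hence one homeomorphic to $S^4$ by Freedman~\cite{Freedman}, and comparing Kirby--Siebenmann invariants under this decomposition forces $\ks(C)=\ks(C')$. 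Freedman's classification of simply connected topological $4$-manifolds with boundary (see~\cite{Freedman} and \cite[Ch.~11]{FQ}) then shows that $\beta$ extends over $C$ as claimed. Gluing $\Id_W$ to $\Psi$ produces an orientation-preserving homeomorphism $W\cup_h C\to W\cup_{h'}C'$ that restricts to the identity on $W$.

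\emph{Assembling $g$.} Composing the homeomorphisms of Steps 1 and 2 gives an orientation-preserving homeomorphism $g\colon X_L\to X_L'$. On the handle part, $g$ agrees with $\phi$, so $g_*\circ\iota=\iota'\circ\phi_*$, where $\iota\colon H_2(W_L)\to H_2(X_L)$ and $\iota'\colon H_2(W_L')\to H_2(X_L')$ are the isomorphisms of \cref{lemma-inclusion-W-iso-on-H2}. Since $f$ and $f'$ are, by definition, $\iota$ and $\iota'$ followed by the identification of the preferred basis with the standard basis of $\Z^m$, and since $\phi_*$ is the identity in preferred bases, we obtain $f'\circ g_*\circ f^{-1}=\Id_{\Z^m}$, as desired.

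\emph{Main obstacle.} The only substantive ingredient is the extension statement of Step 2: an orientation-preserving homeomorphism between the boundaries of two contractible compact topological $4$-manifolds extends to a homeomorphism between the manifolds. This is a known consequence of Freedman's theory (it also underlies, for instance, the topological triviality of corks), but some care is needed to cite or deduce it in exactly the generality used here; everything else in the argument is standard handle theory together with the homological bookkeeping recorded in \cref{lemma-inclusion-W-iso-on-H2}.
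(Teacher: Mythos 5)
Your overall plan matches the paper's: normalise the handle parts using a diffeomorphism $W_L\to W'_L$ that is the identity in preferred bases, reduce to extending a boundary homeomorphism between $C$ and $C'$, and use the fact that $C\cup_\beta (-C')$ is homeomorphic to $S^4$ (by Freedman for closed manifolds, as both sides are simply connected with trivial $H_2$). The divergence is precisely at the ``main obstacle'' you flag. You invoke a relative classification statement --- ``Freedman's classification of simply connected topological $4$-manifolds with boundary shows $\beta$ extends over $C$'' --- but do not produce a precise citation, and the pointer to \cite[Ch.~11]{FQ} is not where such a statement lives. The paper instead finishes with a short self-contained argument that makes the invocation unnecessary: once $C\cup_\beta(-C')\cong S^4=\partial D^5$, the disc $D^5$ itself is an $h$-cobordism rel boundary $(D^5;C,C')$ (automatically, since $C$, $C'$, and $D^5$ are all contractible), and the topological $h$-cobordism theorem \cite[Theorem~7.1A]{FQ} gives a product structure $(D^5,C')\cong(C'\times[0,1],C'\times\{1\})$ restricting to the desired homeomorphism $C\to C'$ that agrees with $\beta$ on the boundary. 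In effect, the $h$-cobordism trick \emph{is} the proof of the extension fact you wanted to cite, so using it directly is both cleaner and avoids any imprecision. Your computation that $\ks(C)=\ks(C')$ is correct but is not actually needed on the paper's route, since the $h$-cobordism step bypasses the need to match invariants. Everything else --- Step 1's re-derivation of \cref{rem:W_L}(2), and the bookkeeping in ``Assembling $g$'' --- is sound and agrees with the paper.
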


\begin{proof}
First, recall that by \cref{rem:W_L}~(2),  there exists an orientation-preserving diffeomorphism $g^W\colon W_L\to W'_L$, and its induced isomorphism $g^W_* \colon H_2(W_L) \to H_2(W'_L)$ sends the bases determined by~$L$ to one another.
    Moreover, these bases determine our preferred bases for $H_2(X_L)$ and $H_2(X'_L)$ via the inclusion-induced isomorphisms $\iota_* \colon H_2 (W_L) \to H_2(X_L)$ and $\iota_*' \colon H_2(W'_L) \to H_2(X'_L)$ from \cref{lemma-inclusion-W-iso-on-H2}. Therefore, we have 
    \begin{equation*}
        f' \circ \iota'_* \circ g^W_* \circ \iota_*^{-1} \circ f^{-1} = \Id_{\Z^m}.
    \end{equation*} 
    Thus, to prove the lemma, it is enough to show that the diffeomorphism $g^W\colon W_L\to W'_L$ extends to an orientation-preserving homeomorphism $g\colon X_L\to X'_L$. 
    To see this, consider the $4$-manifold $C \cup -C'$ glued via the homeomorphism $h'\circ g^W\circ h^{-1}$. 
    Using the fact that  $C$ and $C'$ are contractible and using Seifert-van Kampen and Mayer-Vietoris one can easily verify that 
    $\pi_1(C \cup -C')=0$ and $H_2(C \cup -C')=0$.
    It follows from Theorem~\ref{thm:freedman}  that $C \cup -C'$ is homeomorphic to $S^4$. Then $D^5$ gives an $h$-cobordism rel.\ boundary $(D^5;C,C')$ from $C$ to $C'$. This is an $h$-cobordism because $C$, $C'$ and~$D^5$ are all contractible, so the inclusion maps are necessarily homotopy equivalences. By the $h$-cobordism theorem~\cite[Theorem~7.1A]{FQ}, this cobordism is a product, i.e.\ we have a homeomorphism of pairs $(D^5,C') \cong (C' \times [0,1],C' \times \{1\})$ that is the identity on $C'$. Restricting this to $C \subseteq D^5$ yields an orientation-preserving homeomorphism $C \to C' \times \{0\} = C'$, which extends $g^W$ to an orientation-preserving homeomorphism $g\colon X_L \to X'_L$. 
\end{proof}

\begin{lemma}\label{lem:existence}
  Let $X$ be a closed, oriented, simply connected, topological $4$-manifold. Then there exists 
  a framed, unimodular link~$L$ such that $X_L$ is orientation-preserving homeomorphic to~$X$.
\end{lemma}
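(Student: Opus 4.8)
The plan is to produce a framed, unimodular link $L$ for which $X_L$ has the same intersection form and the same Kirby--Siebenmann invariant as $X$, and then to invoke Freedman's classification (\cref{thm:freedman}(2)) to obtain an orientation-preserving homeomorphism $X_L\to X$.

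First I would fix a basis of the torsion-free group $H_2(X)$, so that $\lambda_X$ is represented by a symmetric, unimodular, integral $(m\times m)$-matrix $V$, and set $k\defeq\ks(X)\in\Z/2$. It is standard (see, e.g., \cite{GS}) that every symmetric integral matrix occurs as the linking matrix of a framed link: stack $m$ unknots with framings the diagonal entries of $V$, and insert the appropriate number of signed clasps between the $i$-th and $j$-th components to realise the off-diagonal entries $v_{ij}$. Since $V$ is unimodular, any such link $L^{(0)}$ is a framed, unimodular link. By \cref{lem:simply_connected}, $X_{L^{(0)}}$ is a closed, oriented, simply connected, topological $4$-manifold, and by \cref{lemma-inclusion-W-iso-on-H2} together with \cref{rem:W_L}~(2) its intersection form is represented by $V$ in the preferred basis coming from the handles of $W_{L^{(0)}}$. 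Hence the identity of $\Z^m$ is an orientation-respecting isometry from $\lambda_X$ to $\lambda_{X_{L^{(0)}}}$.

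It remains to arrange equality of the Kirby--Siebenmann invariants, and here a case distinction enters. Because $W_{L^{(0)}}$ is smooth, $\ks(X_{L^{(0)}})$ is equal to the Rokhlin invariant $\mu(Y^{(0)})\in\Z/2$ of the splitting homology sphere $Y^{(0)}=\partial W_{L^{(0)}}$; this follows from the Freedman--Kirby description of the Kirby--Siebenmann invariant that we recall and turn into an algorithm in \cref{section:computing-Kirby-Siebenmann}. If $V$ is even, then, since $X_{L^{(0)}}$ is a closed, oriented, simply connected, topological $4$-manifold with even intersection form, its Kirby--Siebenmann invariant is forced to equal $\frac{1}{8}\sign(V)\bmod 2$ (the Rokhlin constraint, cf.\ the hypothesis on $k$ in \cref{thm:freedman}(1)); the same relation holds for $X$, so $\ks(X_{L^{(0)}})=\frac{1}{8}\sign(V)=\ks(X)$ and I take $L\defeq L^{(0)}$. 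If $V$ is odd, then there is a nonempty characteristic sublink $L^{(0)}_c$ of $L^{(0)}$ --- here $c\subseteq\{1,\dots,m\}$ is the unique solution of $Vc\equiv\operatorname{diag}(V)\pmod 2$, and $c=\varnothing$ would force $V$ even. Choosing an index $i_0\in c$, I form a new link $L^{(1)}$ from $L^{(0)}$ by connect-summing the component $L^{(0)}_{i_0}$ with a trefoil inside a small ball disjoint from the rest of the diagram. This leaves the linking matrix (hence $c$, and hence unimodularity) unchanged, but it changes $\Arf(L_c)$ and therefore, by the surgery formula for the Rokhlin invariant, it changes $\mu(\partial W_L)$ by $1$. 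Consequently exactly one of $\ks(X_{L^{(0)}})$, $\ks(X_{L^{(1)}})$ equals $k$, and I take $L$ to be the corresponding link.

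With $L$ chosen in this way, $X_L$ is a closed, oriented, simply connected, topological $4$-manifold, its intersection form is orientation-respectingly isometric to $\lambda_X$, and $\ks(X_L)=\ks(X)$; so \cref{thm:freedman}(2) provides an orientation-preserving homeomorphism $X_L\to X$, as required. The part I expect to be the main obstacle is the control of $\ks(X_L)$ in the odd case --- that is, establishing that $\ks(X_L)$ is computed by the Rokhlin invariant of $\partial W_L$ and that this invariant can be toggled by a local modification of $L$ that does not disturb the linking matrix. The even case requires no such argument, since there the Rokhlin constraint pins $\ks$ down from the signature.
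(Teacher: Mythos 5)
Your proof is correct and follows essentially the same strategy as the paper's: realise $\lambda_X$ as the linking matrix of a framed, unimodular link, then (if necessary) tie a trefoil into a component of the characteristic sublink to toggle the Kirby--Siebenmann invariant, and invoke the uniqueness part of Freedman's classification. You are in fact a bit more careful than the paper, which cites Freedman's trefoil trick without noting that in the even case the characteristic sublink is empty and the trick is unavailable --- but then, as you correctly observe, the Rochlin constraint $\ks \equiv \sigma/8 \pmod 2$ already pins down $\ks(X_L)=\ks(X)$ and no alteration is needed.
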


\begin{proof}
Let $V$ be a matrix representing the intersection form $\lambda_X$ of $X$. Since $X$ is a closed manifold, \(V\) is an integral, symmetric, unimodular matrix. Thus we can choose a framed, unimodular link~$L$ in $S^3$ whose linking matrix is $V$. Then the intersection form of $X_L$ is represented by \(V\) (see Lemma~\ref{lem:linking_equal_intersection} below and~\cite[Proposition~4.5.11]{GS}) and in particular the intersection forms of $X_L$ and $X$ are isometric. In~\cite[p.~371]{Freedman}, Freedman explained how to alter $L$ by tying a trefoil into some specified component of $L$ (belonging to the characteristic sublink), in order to change the Kirby--Siebenmann invariant $\ks(X_L)$ of $X_L$  whilst preserving its intersection form. Thus we can also arrange that $\ks(X_L) = \ks(X)$. 
For more details on this construction, we refer to the algorithm for computing the Kirby--Siebenmann invariant in \cref{lem:Ks_algo} below.  
By the uniqueness part Freedman's Theorem~\ref{thm:freedman} we conclude that \(X_L\) is orientation-preserving homeomorphic to \(X\).
\end{proof}

Putting these lemmas together yields a proof of \cref{thm:topKirby}.

\begin{proof}[Proof of \cref{thm:topKirby}]
(1) is \cref{lem:simply_connected}, (2) follows from \cref{lem:uniqueness}, and (3) is \cref{lem:existence}.
\end{proof}


\section{Extracting the intersection form}\label{sec:int_form}
In this section, we discuss how to compute the intersection form of a closed, oriented, simply connected, topological $4$-manifold from any of its Kirby diagrams.

\begin{lemma}\label{lem:linking_equal_intersection}
     Let $L$ be a framed, unimodular link. Then the linking matrix~$V$ of $L$ represents the intersection form $\lambda_{X_L}$ of $X_L$ in the preferred basis induced by the handle part.
\end{lemma}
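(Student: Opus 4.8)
The plan is to reduce the statement to the already-known smooth case via the inclusion $\iota\colon W_L \hookrightarrow X_L$. By \cref{lemma-inclusion-W-iso-on-H2} this inclusion induces an isomorphism $\iota_*\colon H_2(W_L)\to H_2(X_L)$, and the preferred basis of $H_2(X_L)$ is by definition the $\iota_*$-image of the preferred basis of $H_2(W_L)$ (the cores of the $2$-handles capped off by cones in $D^4$). So it suffices to show that $\iota_*$ carries the intersection form $\lambda_{W_L}$ to $\lambda_{X_L}$, since by \cref{rem:W_L}(2) the form $\lambda_{W_L}$ is represented by the linking matrix $V$ in the preferred basis.

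First I would recall the relevant naturality of intersection forms. For a compact oriented $4$-manifold $W$ with boundary $\partial W$ an integral homology sphere, the intersection form $\lambda_W$ on $H_2(W)$ can be computed as follows: given classes $\varphi,\psi\in H_2(W)$, one has $\lambda_W(\varphi,\psi)=\langle \PD_W^{-1}(\varphi)\cup \PD_W^{-1}(\psi),[W]\rangle$, where $\PD_W\colon H^2(W,\partial W)\to H_2(W)$; since $H^*(\partial W)$ vanishes in the relevant degrees, the map $H^2(W,\partial W)\to H^2(W)$ is an isomorphism and one may equivalently use absolute cohomology. The key point is then a standard excision/naturality argument: for the inclusion $\iota\colon W_L\hookrightarrow X_L$ of a codimension-zero submanifold with $X_L = W_L\cup_Y C$, the fundamental class $[X_L]$ restricts, under the collapse map $X_L\to X_L/C \simeq W_L/\partial W_L$, to the relative fundamental class $[W_L,\partial W_L]$. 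Combining this with the compatibility of cup products with the maps $H^2(X_L)\to H^2(W_L)\xrightarrow{\ \cong\ } H^2(W_L,\partial W_L)$ and Poincar\'e--Lefschetz duality on both manifolds yields that for all $x,y\in H_2(W_L)$,
\begin{equation*}
\lambda_{X_L}(\iota_* x, \iota_* y) = \lambda_{W_L}(x,y).
\end{equation*}
Concretely, I would express both sides via the cup-product pairing after pulling everything back to $H^2(W_L,\partial W_L)$: the class $\PD_{X_L}^{-1}(\iota_* x)\in H^2(X_L)$ pulls back to $\PD_{W_L}^{-1}(x)$ under $H^2(X_L)\to H^2(W_L)\cong H^2(W_L,\partial W_L)$ (this uses that $\iota_*$ is dual to this composite, via the naturality of cap products and the fundamental-class restriction just mentioned), and the evaluation against $[X_L]$ matches the evaluation of the cup product against $[W_L,\partial W_L]$.

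The main obstacle, and the step deserving the most care, is the cohomological bookkeeping around the boundary: since $W_L$ has nonempty boundary one must pass between absolute and relative (co)homology, and verify that the fundamental class $[X_L]\in H_4(X_L)$ is compatible with $[W_L,\partial W_L]\in H_4(W_L,\partial W_L)$ under the relevant maps — essentially because $C$ is contractible so $H_4(X_L,W_L)\cong H_4(C,\partial C)\cong H_4(W_L,\partial W_L)$ by excision, and the orientations match by construction. Once this identification is in place, everything else is formal naturality of Poincar\'e--Lefschetz duality and cup products. Alternatively, and perhaps more cleanly, one can cite \cite[Proposition~4.5.11]{GS} for the smooth computation $\lambda_{W_L}\leftrightarrow V$, and then invoke \cref{lemma-inclusion-W-iso-on-H2} together with the general principle (cf.\ \cite[Chapter 1.2]{GS}) that for a codimension-zero inclusion inducing an isomorphism on $H_2$ and sending fundamental class to relative fundamental class, the intersection form is preserved — which is exactly the situation here. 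I would present the argument in this second, more streamlined form, spelling out only the fundamental-class compatibility explicitly.
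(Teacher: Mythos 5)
Your proposal is correct and takes essentially the same approach as the paper: reduce to the smooth computation of $\lambda_{W_L}$ via the linking matrix (\cref{rem:W_L}, \cite[Proposition~4.5.11]{GS}) and argue that the inclusion $\iota \colon W_L \hookrightarrow X_L$ is an isometry of intersection forms by naturality of cup and cap products and compatibility of fundamental classes. The paper simply cites a reference for that naturality step, whereas you spell it out in more detail.
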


 \begin{proof}
We write $X_L$ as $X_L=W_L\cup_h C$, with $C$ a contractible $4$-manifold and $W$ the handle part. 
     By \cref{lemma-inclusion-W-iso-on-H2}, the inclusion $\iota \colon W_L\hookrightarrow X_L$ induces an isomorphism $\iota_* \colon H_2(W)\to H_2(X)$, so it follows from naturality of the cup and cap products that $\iota_*$ is an isometry from the intersection form~$\lambda_{W_L}$ of $W_L$, to~$\lambda_{X_L}$  (we refer to \cite[Chapter~153.7]{friedlmonster} for details). As observed in \cref{rem:W_L} (see also~\cite[Proposition~4.5.11]{GS}), the form~$\lambda_{W_L}$ is given, with respect to the basis induced from the $2$-handles of~$W_L$, by~$V$.
 \end{proof}

\begin{lemma}\label{lem:computing_intersection_from}
    There exists an algorithm that
    \begin{itemize}
        \item takes as input a framed, unimodular link $L$, and
        \item outputs the matrix $V$ that represents the intersection form of $X_L$ in the preferred basis induced by the handle part.
    \end{itemize}
\end{lemma}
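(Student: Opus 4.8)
The plan is to make the algorithm of \Cref{lem:linking_equal_intersection} explicit: since that lemma already tells us the answer \emph{is} the linking matrix $V$ of $L$, all that remains is to show $V$ can be computed from the combinatorial data used to represent $L$ (see \Cref{rem:linkrep}). First I would fix, once and for all, one of the standard diagrammatic encodings of $\Z$-labelled, oriented, ordered link diagrams --- say PD codes, or Gauss codes together with the list of framings --- and observe that from such an encoding one can algorithmically produce an ordinary planar link diagram $D$ for $L$, together with a sign $\pm 1$ at each crossing determined by the orientations, and the ordered partition of the crossings according to which component(s) pass through them. The framings $f_1,\dots,f_m$ are part of the input, so the diagonal entries $v_{ii}=f_i$ of $V$ require no computation.

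Next I would compute the off-diagonal entries. For $i\neq j$, the linking number $\lk(L_i,L_j)$ equals one half of the sum of the signs of those crossings of $D$ at which $L_i$ and $L_j$ cross each other (crossings of a component with itself being ignored); this is a classical, finite, purely combinatorial formula, and reading off which crossings are $L_i$-over-$L_j$ or $L_j$-over-$L_i$ mixed crossings is immediate from the encoding. Summing signs, dividing by two, and assembling the $(m\times m)$ array yields $V$. Since $L$ is assumed unimodular one can, as noted in \Cref{rem:W_L}(1), additionally verify $\det V=\pm 1$ by an integer determinant computation, though this is not strictly needed for the output. By \Cref{lem:linking_equal_intersection}, this $V$ represents $\lambda_{X_L}$ in the preferred basis induced by the handle part, which is exactly what the algorithm is required to output.

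There is essentially no serious obstacle here: the only thing to be careful about is the bookkeeping of translating whatever link-diagram format is chosen as the official input (PD code, DT code, Gauss code, braid word, or isosignature) into the crossing-sign and component-incidence data needed for the linking-number formula, and checking that the well-known conversions between these formats are themselves algorithmic --- which they are, and which is the content of the references cited in \Cref{rem:linkrep}. The mathematical content, namely that the linking matrix \emph{is} the intersection form, has already been supplied by \Cref{lem:linking_equal_intersection} and \cite[Proposition~4.5.11]{GS}, so this lemma is effectively a bridge between the combinatorial input conventions and that structural statement.
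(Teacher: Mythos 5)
Your proposal is correct and takes essentially the same route as the paper: invoke \Cref{lem:linking_equal_intersection} to reduce the problem to computing the linking matrix, then compute it from the diagram using the crossing-sign formula for linking numbers (which the paper cites as~\cite[Proposition~4.5.2]{GS}) together with the given framings for the diagonal entries. The extra detail you supply about parsing the diagrammatic encoding is reasonable elaboration of what the paper treats as routine.
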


\begin{proof}
   Since the linking numbers of the components of an oriented link $L$ are computable from any link diagram of $L$ via its combinatorial formula (see for example~\cite[Proposition~4.5.2]{GS}), we can algorithmically compute the linking matrix $V$ of a framed link $L$. 
   By Lemma~\ref{lem:linking_equal_intersection}, the linking matrix represents the isometry type of the intersection form of $X_L$ in the desired basis.
\end{proof}


\section{Computing the Kirby--Siebenmann invariant}\label{section:computing-Kirby-Siebenmann}

In this section, we explain how to compute the Kirby--Siebenmann invariant of a closed, oriented, simply connected, topological $4$-manifold from any of its Kirby diagrams.
For that, we first discuss the Arf invariant of a characteristic homology class, which will appear in a formula for the Kirby--Siebenmann invariant due to Freedmann--Kirby. 

\begin{definition}
A \emph{characteristic homology class}, with respect to the intersection form $\lambda_X$ of a $4$-manifold $X$, is an element $c \in H_2(X)$ such that $\lambda_X(c, x) \equiv \lambda_X(x,x) \pmod 2$ for every $x\in H_2(X)$. 
\end{definition}

First, we briefly recall how to define the \emph{Arf invariant}  $\Arf_X(c) \in \Z/2$ of a characteristic homology class $c \in H_2(W)$ in a \emph{smooth} $4$-manifold $W$. We choose a closed, oriented, 
smoothly embedded surface $\Sigma$ in $W$ representing~$c$.  In~\cite{Freedman-Kirby}, Freedman--Kirby defined a quadratic enhancement $q \colon H_1(\Sigma;\Z/2) \to \Z/2$ of the $\Z/2$-intersection form $\lambda_{\Sigma} \colon H_1(\Sigma;\Z/2) \times H_1(\Sigma;\Z/2) \to \Z/2$.  For a detailed definition of $q$, we refer to \cite[p.~121]{Matsumoto-Rochlin}, just above Lemma 1.1. We will not reproduce the full argument here, but we describe the outline.  One represents an element of $H_1(\Sigma;\Z/2)$ by a simple closed curve $\gamma$ in $\Sigma$, and chooses a generically immersed disc $C_\gamma \looparrowright W$ with boundary $\gamma$. Then $q([\gamma])$ is defined in terms of framing and intersection data derived from $C_\gamma$. 
With that we define \[\Arf_W(c) := \Arf(\Sigma) := \Arf(H_1(\Sigma;\Z/2),\lambda_{\Sigma},q) \in \Z/2,\]
where the last term is the algebraically defined $\Arf$ invariant of the quadratic form.

For a simply connected, topological $4$-manifold we can define the Arf invariant as follows.

\begin{definition}\label{defn:new-arf-defn}
    Let $X$ be an oriented, closed, simply connected, topological $4$-manifold and $c\in H_2(X)$ be a homology class. Then we define the \textit{Arf invariant} $\Arf_X(c)$ of $c$ as
    \begin{equation*}
        \Arf_X(c):=\Arf_W\big(\iota_*^{-1}(c)\big),
    \end{equation*}
    where $\iota\colon W\to X=W\cup_h C$ is the inclusion map of the handle part $W$ for some 
    decomposition of~$X$ into $W\cup_h C$ as in \cref{sec:input}.
\end{definition}

That this is well-defined, in particular that it does not depend on the choice of decomposition of $X$ into $W \cup_h C$, follows from the next theorem, essentially due to Freedman and Kirby~\cite{Freedman-Kirby}. We will use it later to compute the Kirby--Siebenmann invariant. 

\begin{theorem}[Freedman--Kirby]\label{theorem:Freedman-Kirby}
   Let $X$ be a closed, oriented, simply connected, topological $4$-manifold. 
    If $c$ is a characteristic homology class in $H_2(X)$ and $\sigma(X)$ is the signature of $X$, then
    \begin{equation*}
    \ks(X)=\Arf_X(c)+\frac{\lambda_X(c,c) - \sigma(X)}{8} \in \Z/2.
    \end{equation*}
    In particular, it follows that $\Arf_X(c)$ is well-defined.
\end{theorem}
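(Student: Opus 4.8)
\textbf{Proof proposal for \Cref{theorem:Freedman-Kirby}.}

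The plan is to reduce the statement to the original Freedman--Kirby formula for \emph{smooth} (or \emph{almost-smooth}) $4$-manifolds and then propagate it across the decomposition $X = W \cup_h C$. First I would fix a decomposition of $X$ into the handle part $W$ and the contractible part $C$, with splitting $3$-manifold $Y = \partial W = \partial C$ an integral homology sphere, as in \Cref{sec:input}. Using \Cref{lemma-inclusion-W-iso-on-H2}, the inclusion $\iota\colon W \hookrightarrow X$ induces an isomorphism $\iota_*\colon H_2(W) \to H_2(X)$, and by (the proof of) \Cref{lem:linking_equal_intersection} this is an isometry of intersection forms; hence $c' := \iota_*^{-1}(c)$ is a characteristic element for $\lambda_W$, and $\lambda_W(c',c') = \lambda_X(c,c)$. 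Moreover $\sigma(W) = \sigma(X)$ since the forms are isometric. So the right-hand side of the claimed formula equals $\Arf_W(c') + \frac{\lambda_W(c',c') - \sigma(W)}{8}$, which is exactly the Freedman--Kirby expression computed inside the smooth manifold $W$; note this makes the well-definedness assertion equivalent to showing that this smooth expression computes $\ks(X)$ independently of the chosen decomposition.

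Next I would invoke the Freedman--Kirby theorem in the form that applies to a compact smooth $4$-manifold $W$ with homology-sphere boundary: capping $Y$ off with the contractible topological manifold $C$ (which carries a unique smooth structure away from one point, or more precisely is smoothable in the complement of a point by Freedman--Quinn) produces a closed topological $4$-manifold, and the Freedman--Kirby relative formula identifies $\Arf_W(c') + \frac{\lambda_W(c',c') - \sigma(W)}{8}$ with the Kirby--Siebenmann invariant of that capped-off manifold. Since that capped-off manifold is precisely $X_L$-style $X$, and since $\ks$ is a homeomorphism invariant, this yields $\ks(X) = \Arf_W(c') + \frac{\lambda_X(c,c) - \sigma(X)}{8}$, which is the assertion after substituting $\Arf_X(c) = \Arf_W(c')$ from \Cref{defn:new-arf-defn}. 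Finally, well-definedness of $\Arf_X(c)$ falls out for free: the left-hand side $\ks(X)$ and the term $\frac{\lambda_X(c,c) - \sigma(X)}{8}$ are manifestly independent of the decomposition, so $\Arf_X(c) = \ks(X) - \frac{\lambda_X(c,c)-\sigma(X)}{8}$ is too; in particular it does not depend on the choice of $W \cup_h C$, nor (by the classical smooth statement) on the choice of characteristic surface $\Sigma$ representing $c'$ in $W$.

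The main obstacle I anticipate is citing the Freedman--Kirby formula in exactly the generality needed: the original source treats closed smooth $4$-manifolds, whereas here $W$ is a compact smooth $4$-manifold with nonempty (homology-sphere) boundary, glued to a topological contractible piece. I would need to either (a) appeal to the Freedman--Quinn smoothing result that $C \setminus \{\mathrm{pt}\}$ admits a smooth structure, so that the Arf/signature correction term is literally computed in a smooth setting and the Kirby--Siebenmann invariant is the obstruction to smoothing across the missing point, or (b) use the relative version of the Freedman--Kirby argument (as in Matsumoto's treatment of Rokhlin's theorem, which the excerpt already cites) applied directly to $W$ with the quadratic refinement $q$ defined via immersed discs in $W$. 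Either route requires being careful that the quadratic enhancement $q$ used to define $\Arf_W$ is the one for which the Freedman--Kirby identity holds; checking that compatibility, and confirming that gluing in the contractible topological piece does not alter the relevant data beyond introducing the $\ks$ term, is the technical heart of the argument. Everything else — the isometry bookkeeping for $\lambda$, $\sigma$, and characteristic classes — is routine naturality via \Cref{lemma-inclusion-W-iso-on-H2} and \Cref{lem:linking_equal_intersection}.
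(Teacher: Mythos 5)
Your high-level scaffolding matches the paper exactly: fix a decomposition $X = W\cup_h C$, use \Cref{lemma-inclusion-W-iso-on-H2} and the isometry from \Cref{lem:linking_equal_intersection} to identify $\lambda_X(c,c)$ with $\lambda_W(\iota_*^{-1}(c),\iota_*^{-1}(c))$ and $\sigma(X)$ with $\sigma(W)$, and then reduce to ``the Freedman--Kirby identity on the smooth side.'' You also correctly locate the real difficulty: how to invoke a smooth/Rochlin-type statement for a compact $W$ with homology-sphere boundary and tie it to $\ks(X)$. But you stop there, offering two plausible-sounding routes without carrying either out, and neither is quite what closes the gap.

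The paper's actual bridge is a clean two-step argument through the Rochlin invariant $\mu(Y)$ of the splitting homology sphere $Y=\partial W=\partial C$, and it is worth noting how concretely it differs from your routes (a) and (b). First, additivity of $\ks$ under gluing along boundaries gives $\ks(X)=\ks(W)+\ks(C)=\ks(C)$ since $W$ is smooth. Then one caps $C$ off the \emph{other} way: take a compact, spin, \emph{smooth} $4$-manifold $N$ with $\partial N = Y$; by $\ks$-additivity again $\ks(N\cup_Y C)=\ks(C)$, and since $N\cup_Y C$ is closed and spin, \cite[Prop.~10.2B]{FQ} gives $\ks(N\cup_Y C)\equiv \sigma(N\cup_Y C)/8 = \sigma(N)/8 \pmod 2$ (Novikov additivity, $\sigma(C)=0$). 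That last quantity is by definition $\mu(Y)$, so $\ks(X)=\mu(Y)$. Second, the classical formula (\cite[Eq.~(2.4)]{Saveliev}) expresses $\mu(Y)$ precisely as $\Arf_W(\iota_*^{-1}(c)) + \tfrac{\lambda_W(\cdot,\cdot)-\sigma(W)}{8}$ for any characteristic class. Chaining these and rewriting via the isometry gives the theorem. Your route (a) (smoothing $C$ away from a point) and route (b) (a ``relative Freedman--Kirby'') gesture at this but do not supply the actual auxiliary spin filling $N$ of $Y$ nor the $\ks$-additivity bookkeeping; as stated your argument has a genuine gap at exactly the step you flagged as the ``technical heart.'' The paper resolves it by passing to $\mu(Y)$ as an intermediary invariant of the splitting $3$-manifold, which is the idea you were missing.
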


Freedman--Kirby worked with smooth $4$-manifolds, so for them, the left-hand side was always zero. We will explain below how to deduce the statement we need from statements in the literature. It requires some work because the original references on Rochlin's theorem were written before Freedman's work on the disc embedding theorem, so did not consider topological $4$-manifolds. The statement we give was motivated by statements made in the introduction to~\cite{ConantSchneidermanTeichner_ksAndArf}. 

\begin{remark}\label{remark:arf-K}
Later we will relate $\Arf_X(c)$ to the Arf invariant of a knot $K$ determined by $L$ and~$c$. 
As explained by Matsumoto~\cite[p.~122]{Matsumoto-Rochlin}, we can compute the Arf invariant $\Arf(K)$ of a knot~$K$ in~$S^3$ (as defined in \cite{Robertello}) by taking a properly smoothly embedded, connected, orientable, compact surface $F$ in~$D^4$ with $\partial F=K$, and using the quadratic enhancement  $q \colon H_1(F;\Z/2) \to \Z/2$ to define $\Arf(F)$. Then Matsumoto showed that $\Arf(K) = \Arf(F)$.  
\end{remark}

\begin{proof}[Proof of \cref{theorem:Freedman-Kirby}]
We explain how to obtain the statement from results in the literature. Fix a decomposition $X = W \cup_Y C$ as in \cref{sec:input}, with $W$ smooth, $C$ contractible, and $Y$ an integer homology $3$-sphere. 
Since, as explained in~\cite{guide}, the Kirby--Siebenmann invariant is additive under glueing of $4$-manifolds along their boundaries, $\ks(X) = \ks(W) + \ks(C) = \ks(C)$, using that $W$ is smooth in the second equality. It is known that there exists a compact, spin, smooth $4$-manifold $N$ with $\partial N = Y=  \partial C$. Then $\ks(N \cup_Y C) = \ks(C)$ and $N \cup_Y C$ is spin, so \[\ks(C) = \ks(N \cup_Y C) \equiv \sigma(N \cup_Y C)/8 = \sigma(N)/8 \mod{2}.\]
The fact that $\ks(N \cup_Y C) \equiv \sigma(N \cup_Y C)/8$ follows from \cite[Proposition~10.2B]{FQ}.  
The last equality follows from Novikov additivity and $\sigma(C)=0$. 
    The quantity $\sigma(N)/8$ modulo 2 is by definition the \emph{Rochlin invariant} $\mu(Y)$ of $Y$. We deduce that $\ks(X) = \mu(Y)$. Let $\iota \colon W \to X$ be the inclusion. Recall that $\iota_* \colon H_2(W) \to H_2(X)$ is an isomorphism, see \cref{lemma-inclusion-W-iso-on-H2}.
    Then it follows from \cite[Equation~(2.4), p.~39]{Saveliev} that
    \[\mu(Y) = \Arf_W\big(\iota_*^{-1}(c)\big)+\frac{\lambda_W\big(\iota_*^{-1}(c),\iota_*^{-1}(c)\big) - \sigma(W)}{8}.\]
    As in the proof of \cref{lem:linking_equal_intersection}, we also have that $\iota_*$ is an isometry of intersection forms, so  $\lambda_X(c,c) = \lambda_W(\iota_*^{-1}(c),\iota_*^{-1}(c))$ and $\sigma(X) = \sigma(W)$. 
   Note that by \cref{defn:new-arf-defn}, we have $ \Arf_X(c)=\Arf_W(\iota_*^{-1}(c))$.
    We deduce that
   \[\ks(X)= \mu(Y) = \Arf_X(c)+\frac{\lambda_X(c,c) - \sigma(X)}{8},\] 
as desired. Since the Kirby--Siebenmann invariant is well-defined, it follows that the Arf invariant of a characteristic homology class in a simply connected, topological $4$-manifold is well-defined.
\end{proof}

To describe the algorithm to compute the Kirby--Siebenmann invariant we start with an algorithm to compute a characteristic vector. A \emph{characteristic vector} of an integral, symmetric, unimodular matrix $V$ is an integral vector~$c$ such that $c^T V x \equiv x^T Vx \pmod 2$ for every integral vector $x$. 
The following lemma shows that any integral, symmetric, unimodular matrix admits a characteristic vector that is algorithmically computable.

\begin{lemma}\label{lem:charVectAlgo}
    There exists an algorithm that
    \begin{itemize}
    \item takes as input an integral, symmetric, unimodular matrix $V$, and
    \item outputs a characteristic vector of $V$ with all entries either $0$ or $1$. 
    \end{itemize}
\end{lemma}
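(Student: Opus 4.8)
The plan is to reduce the construction of a characteristic vector to solving a single linear system over the field $\mathbb{F}_2$. First I would note that, because the map $x \mapsto x^T V x$ satisfies $(x+y)^T V(x+y) = x^T V x + 2\,x^T V y + y^T V y \equiv x^T V x + y^T V y \pmod 2$, and because $x \mapsto c^T V x$ is linear, the defining condition ``$c^T V x \equiv x^T V x \pmod 2$ for every integral $x$'' needs only to be verified on the standard basis vectors $e_1,\dots,e_m$. Writing $d = (V_{11},\dots,V_{mm})^T \in \Z^m$ for the vector of diagonal entries of $V$, this says precisely that $(Vc)_i \equiv d_i \pmod 2$ for all $i$, i.e.\ $Vc \equiv d \pmod 2$.

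Next I would pass to $\mathbb{F}_2$-coefficients. Let $\bar V$ and $\bar d$ denote the reductions of $V$ and $d$ modulo $2$. Since $V$ is unimodular, $\det \bar V = \det V \bmod 2 = 1$, so $\bar V \in \operatorname{GL}_m(\mathbb{F}_2)$; hence the system $\bar V \bar c = \bar d$ has a (unique) solution $\bar c \in \mathbb{F}_2^m$. Computing it is a finite, purely combinatorial task: reduce all entries of $V$ modulo $2$, form the augmented matrix $[\bar V \mid \bar d]$, and run Gaussian elimination over $\mathbb{F}_2$. The algorithm then outputs the vector $c \in \{0,1\}^m \subseteq \Z^m$ whose entries are the $\{0,1\}$-representatives of $\bar c$. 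By the first step, $c$ is a characteristic vector of $V$, and by construction all its entries are $0$ or $1$.

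There is essentially no serious obstacle here; the only point worth stressing is that unimodularity of $V$ is exactly what makes the mod-$2$ system solvable (in fact uniquely so), which is why a characteristic vector exists at all. Computability of the output is then immediate from the computability of reduction modulo $2$ and of Gaussian elimination over a finite field.
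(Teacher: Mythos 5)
Your proof is correct, and it takes a genuinely different (and in fact slicker) route than the paper. You first observe that both sides of the defining congruence $c^T V x \equiv x^T V x \pmod 2$ are $\mathbb{Z}$-linear mod $2$ in $x$ (the right side because the cross-terms $2\,x^T V y$ vanish mod $2$ and $x_i^2 \equiv x_i$), so it suffices to impose the condition on the standard basis; this turns the problem into the linear system $\bar V \bar c = \bar d$ over $\mathbb{F}_2$, which is solvable (uniquely) because unimodularity makes $\bar V$ invertible, and solvable algorithmically by Gaussian elimination. The paper instead performs simultaneous row and column operations modulo $2$ to bring $V_2$ to the block form $\bigl(\begin{smallmatrix} I_k & 0 \\ 0 & E \end{smallmatrix}\bigr)$ with $E$ even, reads off the characteristic vector $(1,\dots,1,0,\dots,0)$ in the new basis, and changes basis back. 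Your linear-system argument is shorter and makes both existence and uniqueness of the mod-$2$ characteristic class immediate; the paper's normal-form argument is a bit more structural but does strictly more work than needed for this lemma. Either way, correctness is not in question.
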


\begin{proof}
Let $V\in\GL_m(\Z)$ and let 
$V_2$ denote the reduction of $V$ modulo $2$. 
It is sufficient to find~$c$ with respect to $V_2$, as a vector in $(\Z/2)^m$, and then to consider its entries as integers. All subsequent computations are therefore done modulo $2$.

If all the diagonal entries of $V_2$ are zero (i.e.\ $V$ is even), then it is sufficient to take $c=0$. If there is a nonzero diagonal entry in $V_2$ (i.e.\ $V$ is odd), we can perform simultaneous row and column operations so that the matrix of the intersection form can be expressed as 
\[
\begin{pmatrix}
    1\\
    &V_2'
\end{pmatrix}
\]
for some $(m-1)\times (m-1)$ square matrix $V_2'$ over $\Z/2$ (cf.~\cref{lem:diagonal_form}). By iterating this process on $V_2'$, we can find a matrix $P\in\GL(m,\Z/2)$ such that 
\[ U:= P^TV_2P=\begin{pmatrix}
    1\\
    &\ddots\\
    &&1\\
    &&&E
\end{pmatrix} 
\]
for some even $(m-k)\times (m-k)$ matrix $E$ over $\Z/2$, i.e.\ all diagonal entries of $E$ are $0$. 
A characteristic vector of $U$ is given by $c^T:=(1,\dots,1,0,\dots,0)$ consisting of $k$-times $1$ followed by $(m-k)$-times $0$. To see this, we observe that $x_i = x^2_i$, and the fact that $E$ is even implies that $c^TUx = x^T U x$ for all $x \in (\Z/2)^m$. By undoing the basis change we see that $Pc$ is a characteristic vector for $V_2$ and thus also for $V$.
\end{proof}

\begin{proposition}\label{lem:Ks_algo}
   There exists an algorithm that 
   \begin{itemize}
       \item takes as input a framed, unimodular link $L$, and
       \item outputs the Kirby--Siebenmann invariant of $X_L$.
   \end{itemize}
\end{proposition}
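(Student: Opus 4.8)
The plan is to use the Freedman--Kirby formula from \cref{theorem:Freedman-Kirby}, namely $\ks(X_L) = \Arf_{X_L}(c) + \frac{\lambda_{X_L}(c,c) - \sigma(X_L)}{8} \bmod 2$, and to show that every term on the right-hand side is algorithmically computable from the framed link $L$. First, by \cref{lem:computing_intersection_from} we can compute the linking matrix $V$ of $L$, which by \cref{lem:linking_equal_intersection} represents $\lambda_{X_L}$ in the preferred basis coming from the handle part $W_L$. From $V$ we immediately compute $\sigma(X_L) = \sigma(V)$ (the signature of a rational symmetric matrix is computable, e.g.\ by diagonalising over $\Q$ or counting sign changes of leading principal minors), and by \cref{lem:charVectAlgo} we compute a characteristic vector $c \in \{0,1\}^m$ of $V$, which under the preferred isomorphism $f\colon H_2(X_L) \to \Z^m$ of \cref{lemma-inclusion-W-iso-on-H2} corresponds to a characteristic homology class. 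Then $\lambda_{X_L}(c,c) = c^T V c$ is a single integer computation. So the whole problem reduces to computing $\Arf_{X_L}(c)$.

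The main step, and the expected main obstacle, is to compute the Arf invariant term. By \cref{defn:new-arf-defn}, $\Arf_{X_L}(c) = \Arf_{W_L}(\iota_*^{-1}(c))$, so we must compute the Arf invariant of the characteristic class $\iota_*^{-1}(c) = f^{-1}(c) \in H_2(W_L)$ in the smooth manifold $W_L$. The plan here is to realise this class geometrically inside the Kirby picture: writing $c = \sum_{i} c_i e_i$ with $c_i \in \{0,1\}$, the class $f^{-1}(c)$ is represented by the surface obtained from the cores of the $2$-handles attached along the sublink $L_c := \bigsqcup_{c_i = 1} L_i$ together with a Seifert surface in $S^3$ for $L_c$ (this is essentially the standard construction, cf.\ the characteristic sublink discussion in \cite[p.~371]{Freedman} referenced in \cref{lem:existence}). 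As indicated in \cref{remark:arf-K}, the relevant Arf invariant should then equal the Arf invariant of a knot $K$ built from $L_c$ and the framings: roughly, band-sum the components of $L_c$ together (using the framings to record the self-intersection/framing contributions) to obtain a knot $K \subset S^3$ whose Arf invariant $\Arf(K)$, computable from any diagram via the classical formula (e.g.\ from the Alexander polynomial $\Delta_K(t)$ modulo $8$, or via crossing-change recursions), equals $\Arf_{W_L}(f^{-1}(c))$. The delicate point is getting the bookkeeping exactly right: the framings $f_i$ and the mutual linking numbers $\lk(L_i, L_j)$ contribute to the quadratic enhancement $q$ on $H_1$ of the realising surface, and one must verify that these contributions are correctly absorbed when passing from the surface-in-$W_L$ picture to the knot-in-$S^3$ picture, using Matsumoto's identity $\Arf(K) = \Arf(F)$ for a Seifert surface $F$ pushed into $D^4$ and the naturality of the Freedman--Kirby quadratic form under the inclusion $D^4 \hookrightarrow W_L$.

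Assembling these pieces, the algorithm is: (1) compute $V$ from $L$; (2) compute $\sigma(V)$; (3) compute a $\{0,1\}$-valued characteristic vector $c$ of $V$ via \cref{lem:charVectAlgo}; (4) compute the integer $c^T V c$; (5) from the sublink $L_c$ and the framings, construct (a diagram of) the knot $K$ and compute $\Arf(K) \in \Z/2$ by a classical diagrammatic algorithm; (6) output $\Arf(K) + \frac{c^T V c - \sigma(V)}{8} \bmod 2$. Correctness follows by combining \cref{theorem:Freedman-Kirby}, \cref{lem:linking_equal_intersection}, \cref{defn:new-arf-defn}, and the identification of $\Arf_{W_L}(f^{-1}(c))$ with $\Arf(K)$; note the fraction $\frac{c^T V c - \sigma(V)}{8}$ is an integer precisely because $c$ is characteristic (van der Blij's lemma), so step (6) is well-defined. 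I expect step (5) — the explicit construction of $K$ and the verification that its Arf invariant equals $\Arf_{W_L}(f^{-1}(c))$ — to require the most care, since it is where the somewhat informal "tie a trefoil into the characteristic sublink" discussion of Freedman must be turned into a precise, diagram-level procedure; everything else is routine linear algebra over $\Z$ and $\Z/2$ together with standard diagrammatic knot-theory algorithms.
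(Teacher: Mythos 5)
Your proposal is correct and takes essentially the same route as the paper: compute $V$, $\sigma(V)$, and a $\{0,1\}$-characteristic vector $c$ algorithmically, realise $c$ geometrically via the characteristic sublink $L_c$ band-summed to a knot $K_c$, identify $\Arf_{X_L}(c) = \Arf(K_c)$ via a pushed-in surface and Matsumoto's result, compute $\Arf(K_c)$ diagrammatically (e.g.\ from $\Delta_{K_c}(-1)$), and assemble via the Freedman--Kirby formula. You also correctly flag the identification $\Arf_{W_L}(f^{-1}(c)) = \Arf(K_c)$ as the step needing the most care, which is exactly where the paper spends its effort (constructing $\Sigma = F \cup D$ and arguing that $H_1(F;\Z/2) \to H_1(\Sigma;\Z/2)$ is an isometry of quadratic forms).
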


\begin{proof}
    We write $X_L=W_L\cup_h C$. The link $L$ induces a preferred isomorphism $H_2(X_L) \cong \Z^m$. We use the algorithm from \cref{lem:computing_intersection_from} to compute an integer, symmetric, unimodular matrix $V$ representing the intersection form of $X_L$ in the above basis. Next, we compute a characteristic vector $c\in H_2(X_L) \cong \Z^m$ using the algorithm from \cref{lem:charVectAlgo}. As per that lemma, with respect to the standard basis of $\Z^m$, the entries of $c$ are either $0$ or $1$. 
    Let $L_c$ be the associated \emph{characteristic sublink} of $L$ (i.e.\ the components of $L$ indexed by the nonzero elements of $c$). Let $K_c$ in $S^3$ be a knot obtained by some choice of band sum combining the components of $L_c$ into a single component.  
    
    We claim that $\Arf_{X_L}(c) = \Arf(K_c)$. To see this, we construct a closed, oriented surface $\Sigma$ representing $c$ as follows. Consider the cores of the $2$-handles attached to the components of~$L_c$. Add the bands used in the construction of $K_c$. This gives a disc $D$ in $W_L$ with boundary $K_c$.  
    Choose an embedded, compact, oriented surface $F$ in the $0$-handle $D^4$ in $W_L$ such that $\partial F=K_c$. Take $\Sigma := F \cup D$ to obtain a closed surface embedded in $W_L$ representing $c$.   
    Then $H_1(F;\Z/2) \to H_1(\Sigma;\Z/2)$ is an isomorphism, and induces an isometry of quadratic forms (because the quadratic enhancements are geometrically computed in exactly the same way~\cite{Matsumoto-Rochlin}). Thus the Arf invariants of these surfaces coincide, and so as claimed 
    \[\Arf_{X_L}(c) = \Arf(\Sigma) =\Arf(F) = \Arf(K_c).\]
    The last equality follows by \cref{remark:arf-K}.
    Note it follows from the above claim and \cref{theorem:Freedman-Kirby} that
    \[
    \ks(X_L)=\Arf(K_c)+\frac{\lambda_{X_L}(c,c)-\sigma(X_L)}{8} = \Arf(K_c)+\frac{(c^T  V c) -\sigma(X_L)}{8},
    \]
    where in the first expression we consider $c \in H_2(X_L)$ and in the  last expression we consider $c \in \Z^m$. 

    The Arf invariant of the knot $K_c$ can be computed algorithmically from a diagram of $K_c$.
    For example Levine  \cite[p.~544]{Levine-Arf-Alexander} showed that for any knot $J$ with Alexander polnyomial $\Delta_J(t)$ we have $\Delta_J(-1)\equiv 1+4\operatorname{Arf}(J)\,\operatorname{mod} 8$. Alternatively, the Arf invariant can also be calculated from a Gauss diagram~\cite{PolyakViro}. 
    We explain how to compute the signature $\sigma(X_L)$ algorithmically in~\cref{section:intersection-form-algorithm}. 
    It is elementary to see that there exists an algorithm that makes an arbitrary choice of band sums to connect the components of $L$. In summary, we have shown that we can algorithmically compute $\ks(X_L)$ from a diagram of $L$, as desired. 
\end{proof}


\section{Comparing intersection forms}\label{section:intersection-form-algorithm}

 We recall from Section~\ref{sec:int_form} that the intersection form of a closed $4$-manifold can be represented by an integral, symmetric, unimodular matrix $V$. 
In this section, we will present an algorithm that decides whether or not two integral, symmetric, unimodular matrices are congruent. This result is probably known, but we could not find it in the literature, so we include a proof here. 

\begin{proposition}
\label{prop:intersectionform}
  There exists an algorithm that 
   \begin{itemize}
       \item takes as input two integral, symmetric, unimodular matrices $V$ and $V'$, and 
       \item outputs whether or not these two matrices are congruent over $\Z$.
   \end{itemize} 
\end{proposition}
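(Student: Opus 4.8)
The plan is to reduce congruence over $\Z$ to a short list of computable invariants together with a brute-force search in one special case. First I would record the standard congruence invariants of an integral, symmetric, unimodular $(m\times m)$-matrix $V$: its size $m$; its \emph{signature} $\sigma(V)$, which is well-defined by Sylvester's law of inertia and is computable (for instance by diagonalising $V$ over $\Q$ and counting the signs of the diagonal entries, or by a Sturm-sequence count of positive versus negative real eigenvalues --- this in particular discharges the promise made in the proof of \Cref{lem:Ks_algo} to compute $\sigma(X_L)$); and its \emph{type}, i.e.\ whether $V$ is even or odd. The type is a congruence invariant because $x\mapsto Px$ is a bijection of $\Z^m$, and for the matrix $V$ itself one has $x^TVx\equiv\sum_i v_{ii}x_i\pmod 2$, so $V$ is even if and only if all of its diagonal entries are even, which is obviously decidable. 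The algorithm first compares these data for $V$ and $V'$: if the sizes differ, or the signatures differ, or one matrix is even while the other is odd, it outputs ``not congruent''.

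If the inputs survive these tests and $|\sigma(V)|<m$, i.e.\ the forms are \emph{indefinite}, the algorithm outputs ``congruent''. This is justified by the classical classification theorem (Hasse--Minkowski and Eichler; see e.g.\ Serre, \emph{A Course in Arithmetic}, Ch.~V, or Milnor--Husemoller, \emph{Symmetric Bilinear Forms}): two indefinite unimodular integral lattices are isometric if and only if they have the same rank, signature, and type, and here all three invariants already coincide.

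The remaining case is $|\sigma(V)|=m$, so $V$ and $V'$ are definite; replacing $V$ by $-V$ and $V'$ by $-V'$ if $\sigma=-m$ (which preserves the class of integral, symmetric, unimodular matrices) we may assume both are positive definite. Here I would run a finite search. If $P\in\GL_m(\Z)$ satisfies $P^TV'P=V$, then its $i$-th column $w_i$ satisfies $w_i^TV'w_i=v_{ii}$; since $V'$ is positive definite there is a computable $\kappa>0$ (e.g.\ any rational lower bound for the smallest eigenvalue of $V'$) with $w^TV'w\ge\kappa\|w\|^2$, hence $\|w_i\|^2\le N_0/\kappa$ where $N_0:=\max_i\max\{v_{ii},v'_{ii}\}$. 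Therefore every congruence has all its columns in the finite, effectively computable set $S':=\{w\in\Z^m : w^TV'w\le N_0\}$. The algorithm enumerates all $m$-tuples $(w_1,\dots,w_m)\in (S')^m$, keeps those for which $w_i^TV'w_j=v_{ij}$ for all $i,j$ (equivalently $P^TV'P=V$ for $P:=(w_1\mid\dots\mid w_m)$), and checks whether some such $P$ has $\det P=\pm1$; it outputs ``congruent'' if one does and ``not congruent'' otherwise. Correctness in this case is clear: the search is exhaustive over all possible congruences, and conversely any $P$ it finds with $\det P=\pm1$ is an honest congruence.

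The main obstacle is precisely the split between the definite and indefinite cases. There is no uniform bounded search valid for all inputs, because in the indefinite case the level set $\{w : w^TV'w=v_{ii}\}$ typically contains infinitely many lattice points; so one genuinely has to argue differently according to definiteness, and for indefinite forms one must invoke the (nontrivial) classification theorem rather than enumerate. The positive-definite enumeration given above is the naive, self-contained option; in practice it could be replaced by a reduction-theory algorithm (Minkowski reduction, or the Plesken--Souvignier isometry algorithm), but for a decidability statement the bounded search suffices. Everything else --- computing signatures, types, and determinants, and listing the lattice points of $\Z^m$ inside an ellipsoid --- is routine.
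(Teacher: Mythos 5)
Your proposal is correct and follows essentially the same route as the paper: compute signature (via diagonalisation over $\Q$), rank, and parity; for indefinite forms invoke the Hasse--Minkowski/Eichler classification; and for definite forms reduce to the positive-definite case and run a bounded brute-force search over lattice vectors of small norm. The only difference is the explicit bound controlling the search: you use a rational lower bound $\kappa$ for the smallest eigenvalue of $V'$ to get $\|w\|^2\le N_0/\kappa$, whereas the paper proves the cleaner, entirely elementary inequality $|x|\le\|V^{-1}\|_1\,(x^TVx)$, which avoids having to approximate eigenvalues and makes the computability of the bound immediate. Both bounds are valid and either yields the desired finite search region, so the two proofs are interchangeable.
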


In general, an integral, symmetric, unimodular matrix $V$ is not congruent to a diagonal matrix over $\Z$. However, the following lemma shows that over $\Q$ the matrix $V$ is always congruent to a diagonal matrix.

\begin{lemma}\label{lem:diagonal_form}
    There exists an algorithm that
    \begin{itemize}
        \item takes as input an integral, symmetric, unimodular matrix $V$, and
        \item outputs an integral matrix $P$ with determinant $\det(P)\in\Z\setminus\{0\}$ such that $P^TVP$ is an integral diagonal matrix. 
    \end{itemize}
\end{lemma}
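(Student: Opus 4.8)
The plan is to produce the diagonalising matrix $P$ by a recursive ``complete the square'' procedure over $\Q$, carried out in a way that only ever introduces integer entries.

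\medskip

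First I would set up the recursion. Given a nonzero integral symmetric matrix $V=(v_{ij})$ of size $m$, I distinguish two cases. If some diagonal entry $v_{ii}$ is nonzero, then after permuting coordinates (an integral, determinant-$\pm1$ change of basis) we may assume $v_{11}\neq 0$. We then clear the first row and column: for each $j\geq 2$ replace the $j$-th basis vector $e_j$ by $v_{11}e_j - v_{1j}e_1$, which is recorded by an integral matrix $P_1$ of the block form $\left(\begin{smallmatrix} 1 & 0 \\ * & v_{11}I_{m-1}\end{smallmatrix}\right)$, so $\det(P_1)=v_{11}^{m-1}\neq 0$. A direct computation shows $P_1^T V P_1 = \left(\begin{smallmatrix} v_{11} & 0 \\ 0 & v_{11}V'\end{smallmatrix}\right)$ for some integral symmetric matrix $V'$ of size $m-1$. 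We then recurse on $V'$ (not on $v_{11}V'$; the scalar $v_{11}$ only multiplies a block that will itself be diagonalised, and diagonality is preserved under such scaling). The second case is when \emph{all} diagonal entries vanish but $V\neq 0$; then some off-diagonal $v_{ij}\neq 0$, and after permuting we may assume $v_{12}\neq 0$. Replacing $e_1$ by $e_1+e_2$ produces the diagonal entry $2v_{12}\neq 0$ in the $(1,1)$ slot while keeping the matrix integral, and we are back in the first case. Finally, if $V=0$ we output $P=I_m$.

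\medskip

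Next I would assemble the algorithm and its correctness. Since each step strictly decreases the size of the matrix still to be processed (or, in the all-off-diagonal case, is followed immediately by a size-decreasing step), the recursion terminates after at most $O(m)$ stages, each of which involves only finitely many integer arithmetic operations (computing linking-matrix-style entries, permutations, and the integral elementary operations above), all of which are computable. Composing the integral matrices produced at each stage — padded by identity blocks on the already-diagonalised coordinates — yields a single integral matrix $P$. Its determinant is a product of the nonzero determinants of the individual stages, hence a nonzero integer, and by construction $P^T V P$ is diagonal. Since congruence of rational symmetric bilinear forms is equivalent to diagonalisability (Gram--Schmidt / Lagrange's theorem), correctness is immediate; we only needed to check that the procedure can be run with integer, rather than rational, bookkeeping, which is exactly what the factor $v_{11}$ in $P_1$ arranges.

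\medskip

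I expect the only real subtlety — rather than a genuine obstacle — to be the bookkeeping that keeps everything integral: a naive completion of the square would divide by $v_{11}$ and leave $\Q$-entries, so one must carry the factor $v_{11}$ through and verify that it can be discarded on the recursive sub-block without affecting diagonality of the final output. Everything else is routine linear algebra, and the unimodularity of $V$ is not even needed for this lemma (it will matter only later, when $P$ is used to compare forms).
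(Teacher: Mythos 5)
Your argument is essentially the same symmetric Gaussian elimination as in the paper's proof: clear the first row and column via the $v_{11}$-scaled elementary congruence (multiplying the other columns by $v_{11}$ before subtracting, so that everything stays integral), keep track of the nonzero determinant, and recurse on the trailing block. The one substantive difference is how you reach a nonzero pivot when $v_{11}=0$. The paper, upon seeing $v_{11}=0$, picks some $k$ with $v_{1k}\neq 0$ (guaranteed by unimodularity) and replaces $e_1$ by $e_1+e_k$, asserting the resulting $(1,1)$-entry is nonzero; but that entry is $2v_{1k}+v_{kk}$, which can vanish, e.g.\ for $V=\left(\begin{smallmatrix}0&1\\1&-2\end{smallmatrix}\right)$. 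You instead first permute a nonzero diagonal entry into position $(1,1)$ if one exists, and only when every diagonal entry vanishes do you apply $e_1\mapsto e_1+e_2$, in which case the new pivot is exactly $2v_{12}\neq 0$ (there is no $v_{kk}$ left to cancel it). So your treatment of the zero-pivot case is in fact the more careful one, and, as you correctly observe, it works without using unimodularity at all. One small typo: with your column operations $e_j\mapsto v_{11}e_j-v_{1j}e_1$, the matrix $P_1$ is upper, not lower, block-triangular, $\left(\begin{smallmatrix}1&*\\0&v_{11}I_{m-1}\end{smallmatrix}\right)$; its determinant $v_{11}^{m-1}$ is of course unchanged.
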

\begin{proof}
    Write $V = (v_{ij})_{1\leq i, j\leq m}$. First we assume $v_{11}\neq 0$. For $k=2,\dots,m$ multiply the $k$-th column by $v_{11}$ and then subtract $v_{1k}$ times the first column from the $k$-th column. Perform the analogous operation on rows. This corresponds to replacing~$V$ with~$P_{1,k}^TVP_{1,k}$ where
    \[
    P_{1,k}=\begin{pmatrix}
        1 &&& -v_{1k} \\&\ddots\\&&1\\&&&v_{11}\\&&&&1\\&&&&&\ddots\\&&&&&&1 
    \end{pmatrix}.
    \]
    After replacing~$V$ with~$P_1^TVP_1$, for $P_1= P_{1,2} \cdots P_{1,m}$,
    this results in all entries of the first row and first column except for $v_{11}$ being zero, i.e.\ the matrix is of the form
    \[
    \begin{pmatrix}
        v_{11}\\&V'
    \end{pmatrix}.
    \]           
    If $v_{11}=0$, the unimodularity of~$V$ implies that there exists $k\in\{2,\dots,m\}$
    with $v_{1k}=v_{k1}\neq0$. Add the $k$-th row to the first row and the $k$-th column to the first column, that is, replace~$V$ with~$Q_{1,k}^TVQ_{1,k}$, where $Q_{1,k}$~is obtained from the identity matrix by replacing the $(k,1)$-entry with a~$1$. The $(1,1)$-entry of the resulting matrix is no longer zero, so we can continue as in the $v_{11}\neq0$ case. In this case $P_1 = Q_{1,k} \cdot P_{1,2} \cdots  P_{1,m}$. 

    Then we repeat the above process to simultaneously simplify the second row and the second column, and so on. This process stops after finitely many steps and yields an integral matrix $P=P_1\cdots P_m$
    with non-vanishing determinant such that $P^TVP$ is an integral diagonal matrix.
\end{proof}

A direct corollary is that the signature and the definiteness status are computable.

\begin{corollary}\label{cor:alg-for-signature}
    There exists an algorithm that
    \begin{itemize}
        \item takes as input an integral, symmetric, unimodular matrix $V$, and 
        \item outputs the signature of $V$ and whether $V$ is positive definite, negative definite, or indefinite.
    \end{itemize}
\end{corollary}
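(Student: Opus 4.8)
The plan is to reduce both questions to computing a diagonalisation of $V$ over $\Q$, which is precisely what \Cref{lem:diagonal_form} provides. First I would run the algorithm from \Cref{lem:diagonal_form} on the input matrix $V$ to obtain an integral matrix $P$ with $\det(P) \neq 0$ such that $D := P^T V P$ is an integral diagonal matrix, say with diagonal entries $d_1, \dots, d_m$. Since $P$ is invertible over $\Q$, the matrices $V$ and $D$ represent congruent symmetric bilinear forms over $\Q$, so they have the same signature by Sylvester's law of inertia. The signature of a diagonal matrix is trivially computable: it is the number of indices $i$ with $d_i > 0$ minus the number of indices $i$ with $d_i < 0$. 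Note that since $V$ is unimodular (hence nondegenerate) and congruence over $\Q$ preserves nondegeneracy, none of the $d_i$ can be zero; in any case, checking the sign of each integer $d_i$ is an elementary operation.

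From the signature $\sigma$ and the size $m$ of the matrix, the definiteness status is then immediate: $V$ is positive definite if and only if $\sigma = m$ (all $d_i > 0$), negative definite if and only if $\sigma = -m$ (all $d_i < 0$), and indefinite otherwise. Alternatively, and equivalently, one can simply inspect the signs of the diagonal entries $d_1, \dots, d_m$ directly: all positive, all negative, or mixed. Either way this is a finite check on a finite list of integers.

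The only subtlety worth flagging is the justification that passing from $V$ to the $\Q$-congruent diagonal matrix $D$ does not change the signature — but this is exactly Sylvester's law of inertia, a classical fact, and the matrix $P$ produced by \Cref{lem:diagonal_form} is an honest integral matrix with nonzero (hence $\Q$-invertible) determinant, so the hypothesis of that law is met. There is no real obstacle here; the corollary is a direct packaging of \Cref{lem:diagonal_form} together with the observation that the sign pattern of a diagonal form is a complete invariant of its signature and definiteness.
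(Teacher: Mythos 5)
Your proposal is correct and follows essentially the same route as the paper: apply \Cref{lem:diagonal_form} to obtain a $\Q$-congruent integral diagonal matrix, then read off the signature and definiteness from the signs of the diagonal entries via Sylvester's law of inertia. The extra remark that unimodularity of $V$ forces all diagonal entries to be nonzero is a nice sanity check, but otherwise the argument matches the paper's.
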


\begin{proof}
    By \cref{lem:diagonal_form} we can algorithmically find a diagonal matrix $D$ that is congruent to $V$ over~$\Q$. In particular, $D$ has the same definiteness as $V$. Hence, $V$ is positive/negative definite if all diagonal entries of $D$ are positive/negative, respectively, and otherwise, $V$ is indefinite. 
    Similarly, the signature of $V$ equals the signature of $D$ and is given by the number of positive entries in $D$ minus the number of negative entries in $D$.
\end{proof}

As a corollary, we obtain the algorithmic classification of indefinite forms.

\begin{corollary}\label{cor:indefinitecase}
There exists an algorithm that 
\begin{itemize}
    \item takes as input two integral, symmetric, unimodular, indefinite matrices $V$ and $V'$, and
    \item outputs whether or not $V$ and $V'$ are congruent over $\Z$.
\end{itemize}
\end{corollary}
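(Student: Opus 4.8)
The plan is to reduce the problem to the classical classification of indefinite, unimodular, symmetric bilinear forms over $\Z$: such a form is determined up to isometry by exactly three invariants, namely its rank, its signature, and its parity (even versus odd). This is a consequence of the Hasse--Minkowski theorem together with the structure theory of integral lattices; see for example \cite[Chapter~1.2]{GS}, or Serre's \emph{A Course in Arithmetic}, or Milnor--Husemoller's \emph{Symmetric Bilinear Forms}. Granting this classification, the algorithm is essentially immediate: I would compute each of the three invariants for $V$ and for $V'$ and output ``yes'' precisely when all three agree.

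Concretely, the steps are as follows. First, the rank of $V$ equals its size $m$, because $\det(V)=\pm1$ forces $V$ to be nondegenerate; in particular $V$ and $V'$ can only be congruent if they are square matrices of the same size, which is trivially checkable. Second, the signature $\sigma(V)$ is computed using \cref{cor:alg-for-signature}, which diagonalises $V$ over $\Q$ and counts the signs of the diagonal entries. Third, the parity is read off directly: the form represented by $V$ is even if and only if all diagonal entries $v_{ii}$ are even. One should record that parity is indeed an invariant of the congruence class: for every integral vector $x$ one has $x^T V x \equiv \sum_i v_{ii} x_i \pmod 2$ (using $x_i^2 \equiv x_i \pmod 2$), so evenness of $x^T V x$ for all $x$ is equivalent to evenness of all $v_{ii}$, a condition visibly preserved under $V \mapsto P^T V P$. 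Note that one does \emph{not} need to separately verify the constraints $\sigma(V)\equiv 0 \pmod 8$ in the even case, or $|\sigma(V)|\le m$, since these hold automatically for a genuine unimodular form. The algorithm then outputs ``yes'' if and only if $V$ and $V'$ have the same size, the same signature, and the same parity.

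The main obstacle is not algorithmic but conceptual, and it lies entirely in invoking the classification theorem, which is a nontrivial piece of number theory; it is precisely the failure of such rigidity in the positive or negative definite case (where there may be many pairwise non-isometric forms of the same rank and signature) that makes that case genuinely harder and keeps it outside the scope of this corollary. Once the classification of indefinite forms is cited, no difficulties remain: each of the three invariants is manifestly computable from the matrix entries, and no search or enumeration over candidate isometries is required.
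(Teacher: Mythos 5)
Your proposal is correct and follows essentially the same route as the paper: invoke the classification of indefinite unimodular forms over $\Z$ by rank, signature, and parity (Milnor--Husemoller / Serre), then observe that each of these three invariants is directly computable, using \cref{cor:alg-for-signature} for the signature and the diagonal entries for parity. The additional justification you give that parity is a congruence invariant is correct but not needed beyond citing the classification theorem.
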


\begin{proof}
    Since $V$ and $V^\prime$ represent indefinite forms, they are congruent over $\Z$ if and only if they have the same parity, rank, and signature~\cite[Theorem II.5.3]{MH1973}, cf.\ \cite[Chapter IV, Theorem~8]{Serre}. Parity, rank, and signature can be computed algorithmically. Indeed, the signature was discussed in \cref{cor:alg-for-signature}. For the parity, note that a matrix $V$ is odd if and only if it has some odd diagonal entry, which can be checked algorithmically. 
\end{proof}
    
It 
remains to discuss the definite case.
We will focus here only on the positive definite case, as the proof for the negative definite case can be obtained by replacing $V$ with $-V$. 

\begin{lemma}\label{lem:definitecase}
    There exists an algorithm that
    \begin{itemize}
        \item takes as input two integral, positive definite, symmetric, unimodular matrices $V$ and $V'$, and
        \item outputs whether or not $V$ is congruent to~$V'$ over $\Z$.
    \end{itemize}
\end{lemma}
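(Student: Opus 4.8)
The plan is to reduce the congruence problem for positive definite, unimodular, integral matrices to a finite search, using the classical fact that a positive definite integral lattice has only finitely many vectors of any bounded norm, and in particular finitely many vectors of norm equal to the diagonal entries we wish to match. Concretely, two such matrices $V$ and $V'$ (of the same size $m$, which we check first — if the sizes differ they are not congruent) are congruent over $\Z$ precisely when the lattices $(\Z^m, V)$ and $(\Z^m, V')$ are isometric, i.e.\ when there is $P \in \GL_m(\Z)$ with $V = P^T V' P$. The columns of such a $P$ are vectors $p_1, \dots, p_m \in \Z^m$ satisfying $p_i^T V' p_j = v_{ij}$ for all $i,j$; in particular $p_i^T V' p_i = v_{ii}$, so each $p_i$ lies in the set $S_i := \{ x \in \Z^m : x^T V' x = v_{ii} \}$.

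First I would show $S_i$ is finite and effectively enumerable. Since $V'$ is positive definite, there is a constant $\kappa > 0$ (computably extractable, e.g.\ from the diagonalisation over $\Q$ produced by \cref{lem:diagonal_form}, or from a bound on the smallest eigenvalue) with $x^T V' x \geq \kappa \lVert x \rVert^2$ for all $x \in \R^m$; hence every $x \in S_i$ satisfies $\lVert x \rVert^2 \leq v_{ii}/\kappa$, so $S_i$ is contained in an explicit finite box in $\Z^m$ and can be listed by brute force. (One also checks $v_{ii} > 0$; if some diagonal entry of $V$ is non-positive, already $V$ is not positive definite, contradicting the hypothesis, so this does not occur.) Then I would iterate over all $m$-tuples $(p_1, \dots, p_m) \in S_1 \times \cdots \times S_m$ — a finite set — and for each tuple check the finitely many conditions $p_i^T V' p_j = v_{ij}$ for $1 \leq i < j \leq m$ and $\det(P) = \pm 1$ where $P$ is the matrix with columns $p_i$. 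Output \textbf{yes} if some tuple passes all checks, and \textbf{no} otherwise. Correctness is immediate: any such $P$ witnesses congruence, and conversely any congruence $V = P^T V' P$ has columns forming such a tuple, since the relation $P^T V' P = V$ read off entrywise is exactly $p_i^T V' p_j = v_{ij}$, and $\det(P)^2 \det(V') = \det(V)$ with $\det(V) = \det(V') = \pm 1$ forces $\det(P) = \pm 1$.

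The main obstacle is purely one of making the bound on $S_i$ effective: I need an explicit, algorithmically computable lower bound $\kappa$ for the quadratic form $x \mapsto x^T V' x$ on nonzero integer vectors (equivalently a computable upper bound on $\lVert x \rVert$ in terms of $x^T V' x$). This is routine — for instance, $\kappa$ can be taken to be $1/\lVert (V')^{-1} \rVert$ for any submultiplicative matrix norm, since $\lVert x \rVert^2 = \lVert (V')^{-1} V' x \rVert \cdot \lVert x \rVert$-type estimates give $\lVert x \rVert^2 \leq \lVert (V')^{-1} \rVert \, (x^T V' x)$ using positive definiteness and Cauchy--Schwarz, or alternatively one diagonalises $V'$ over $\Q$ via \cref{lem:diagonal_form} and uses the (rational, hence computable) diagonal entries — and once $\kappa$ is in hand the algorithm terminates because every set in sight is finite and every test is a finite arithmetic computation. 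Everything else (computing determinants, checking finitely many integer equalities, enumerating lattice points in an explicit box) is manifestly algorithmic.
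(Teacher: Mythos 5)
Your proposal is correct and takes essentially the same approach as the paper: bound the norm of the columns of any candidate congruence matrix $P$ via a computable lower bound on the positive definite quadratic form (the paper's \cref{lem:upperbound} establishes $|x| \leq \lVert V^{-1}\rVert_1\,(x^T V x)$; your $\kappa = 1/\lVert (V')^{-1}\rVert$ plays the same role), then enumerate the resulting finite set of integer matrices and test each. The only cosmetic differences are that the paper collects all columns into a single ball bounded by $R := \max_i e_i^T V' e_i$ rather than per-column sets $S_i$, and that the $\det(P)=\pm 1$ check you include is automatic from $P^T V' P = V$ with both forms unimodular.
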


The strategy to prove \cref{lem:definitecase} is to construct a finite set of lattice points that contains the image of the standard basis of $\Z^m$ under any isometry between the two given forms. This reduces the search for an isometry to a finite list of matrices. We start with a lemma from linear algebra for which we introduce the following notation. We write $|x|$ for the Euclidean norm (i.e.\ $\ell^2$-norm) of a vector $x$. For a matrix~$A$ with real coefficients we write $\Vert A\Vert_2$ and $\Vert A\Vert_1$, respectively, for the operator norms with respect to the $\ell^2$ and $\ell^1$ norms. 
We recall that for $A$ an $(m\times m)$ matrix,
$\Vert A\Vert_1 = \max_{j} \sum_{i=1}^m |a_{ij}|$ and $\Vert A \Vert_2$ is the square root of the largest eigenvalue of $A^TA$. Note that for a symmetric matrix $A$, it follows $\Vert A\Vert_2=\max\{|\lambda|\,:\, \lambda \,\textrm{ is an eigenvalue of $A$}\}$.

 \begin{lemma}\label{lem:upperbound}
   Let $V$ be an integral, unimodular, symmetric, positive-definite matrix. Then for every integral vector $x$, we have
   $$|x|\leq \lVert V^{-1}\rVert_1 \, (x^TVx).$$
 \end{lemma}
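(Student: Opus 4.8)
The plan is to bound the Euclidean norm $|x|$ by relating it through the positive-definite form $x^T V x$ and controlling the distortion by the operator norm of $V^{-1}$.

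First I would start from the observation that, since $V$ is integral, symmetric and positive-definite, the value $x^T V x$ is a positive integer for every nonzero integral vector $x$; in particular $x^T V x \geq 1$ whenever $x \neq 0$ (and the inequality is trivially true for $x = 0$). Next I would write $y := Vx$, so that $x = V^{-1} y$ and $x^T V x = y^T V^{-1} y$. Because $V^{-1}$ is also symmetric and positive-definite, we have the standard quadratic-form bound $|V^{-1} y|_{\text{op}} $-type estimate; more directly, I would use that $|x| = |V^{-1} y| \leq \lVert V^{-1} \rVert_2 \, |y|$ is not quite what we want since it introduces $|y|$ rather than $y^T V^{-1} y$. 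So instead the cleaner route is: for the positive-definite matrix $V^{-1}$ one has $|x|^2 = x^T x \leq \lVert V^{-1}\rVert_2^{-1}$-type reasoning — actually the key elementary fact is that for positive-definite $A$, $x^T x \leq \lVert A^{-1} \rVert_2 \cdot (x^T A x)$, applied with $A = V^{-1}$, giving $|x|^2 \leq \lVert V \rVert_2 \cdot (x^T V^{-1} x)$, which is again the wrong pairing.

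Let me reorganize: the correct elementary inequality is $x^T A x \geq \lambda_{\min}(A) \, |x|^2$ for symmetric positive-definite $A$, hence $|x|^2 \leq \lambda_{\min}(V)^{-1} (x^T V x) = \lVert V^{-1} \rVert_2 \, (x^T V x)$. This already gives $|x|^2 \leq \lVert V^{-1}\rVert_2 (x^T V x)$. To upgrade to the stated bound with $|x|$ (not $|x|^2$) on the left and $\lVert V^{-1}\rVert_1$ on the right, I would use two further facts: first, $\lVert V^{-1}\rVert_2 \leq \lVert V^{-1}\rVert_1$ is not true in general, but for a \emph{symmetric} matrix $\lVert V^{-1}\rVert_2 \leq \lVert V^{-1}\rVert_1$ does hold because $\lVert A \rVert_2 \leq \sqrt{\lVert A\rVert_1 \lVert A \rVert_\infty} = \lVert A \rVert_1$ when $A$ is symmetric (since then $\lVert A\rVert_1 = \lVert A\rVert_\infty$); second, since $x^T V x \geq 1$ for integral $x \neq 0$, we get $|x|^2 \leq \lVert V^{-1}\rVert_2 (x^T V x) \leq \lVert V^{-1}\rVert_1 (x^T V x)$, and combined with $x^T V x \geq 1$ this yields $|x| \leq |x|^2 \leq \lVert V^{-1}\rVert_1 (x^T V x)$ once $|x| \geq 1$; for $|x| < 1$ integral we have $x = 0$ and the bound is trivial. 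Actually $|x| \leq |x|^2$ holds precisely when $|x| \geq 1$, which covers all nonzero integral $x$, so this chain closes.

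The main obstacle I anticipate is bookkeeping around the norm comparisons: verifying carefully that $\lVert V^{-1}\rVert_2 = \lambda_{\max}(V^{-1}) = \lambda_{\min}(V)^{-1}$ using symmetry and positive-definiteness, and that $\lVert A\rVert_2 \leq \lVert A\rVert_1$ for symmetric $A$ (via $\lVert A\rVert_2^2 \leq \lVert A\rVert_1 \lVert A\rVert_\infty$ and $\lVert A\rVert_1 = \lVert A\rVert_\infty$ for symmetric $A$). None of these steps is deep, but they need to be stated in the right order so that the transition from $|x|^2$ to $|x|$ — which crucially exploits that $x$ is integral and $V$ is positive-definite, forcing $x^T V x \geq 1$ — is valid. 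I would present it as: (1) reduce to $x \neq 0$; (2) note $n := x^T V x \in \Z_{\geq 1}$; (3) apply the Rayleigh bound $|x|^2 \leq \lVert V^{-1}\rVert_2 \, n$; (4) bound $\lVert V^{-1}\rVert_2 \leq \lVert V^{-1}\rVert_1$; (5) use $|x| \leq |x|^2$ (valid since $|x| \geq 1$ for nonzero integral $x$) to conclude $|x| \leq |x|^2 \leq \lVert V^{-1}\rVert_1 \, n = \lVert V^{-1}\rVert_1 (x^T V x)$.
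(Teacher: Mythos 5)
Your proof is correct, and it takes a somewhat more direct route than the paper's. You arrive at the intermediate bound $|x|^2 \leq \lVert V^{-1}\rVert_2\,(x^T V x)$ by the Rayleigh quotient, whereas the paper obtains the same estimate by writing $V = P^T P$ (Sylvester's theorem) and chaining $|x| \leq \lVert P^{-1}\rVert_2\,|Px|$ together with $\lVert P^{-1}\rVert_2 = \sqrt{\lVert V^{-1}\rVert_2}$. You then pass from $|x|^2$ to $|x|$ in one stroke by observing that any nonzero integral vector $x$ has $|x| \geq 1$ and hence $|x| \leq |x|^2$; the paper instead distributes the square root as $\sqrt{\lVert V^{-1}\rVert_2\, x^T V x}=\sqrt{\lVert V^{-1}\rVert_2}\cdot\sqrt{x^T V x}$ and bounds each factor separately, using that $\lVert V^{-1}\rVert_1$ is a non-negative integer (because $V$ is integral and unimodular, so $V^{-1}$ is integral) and that $x^T V x$ is a non-negative integer. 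Both proofs hinge on the norm comparison $\lVert A\rVert_2\leq\lVert A\rVert_1$ for symmetric $A$ (you get it via $\lVert A\rVert_2\leq\sqrt{\lVert A\rVert_1\lVert A\rVert_\infty}$ and $\lVert A\rVert_1=\lVert A\rVert_\infty$; the paper via an eigenvalue estimate). A minor advantage of your route is that it never actually uses integrality or unimodularity of $V$ --- only that $x$ is integral and $V$ is symmetric positive-definite --- so it proves a marginally more general inequality. When you write it up, drop the exploratory detour through $y := Vx$; the five-step plan at the end is the clean argument.
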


\begin{proof}
First, we observe that if $\lambda$ is an eigenvalue of a matrix $A$ with eigenvector $x$, then
\begin{equation*}
    |\lambda|\cdot \Vert x\Vert_1=\Vert \lambda x\Vert_1=\Vert A x\Vert_1\leq \Vert A\Vert_1 \cdot \Vert x\Vert_1,
\end{equation*}
and thus $|\lambda|\leq \Vert A\Vert_1.$
So if $A$ is symmetric, then 
\begin{equation}\label{eqn:2smaller1}
    \Vert A\Vert_2\leq \Vert A\Vert_1.
\end{equation}
Since $V$ is positive definite, Sylvester's theorem implies that there exists an invertible matrix $P$ with real coefficients such that $V=P^TP$.
For $P$ we estimate
\begin{align}\label{eqn:basic-properties}
    \lVert P^{-1}\rVert_2  \, &= \, \lVert (P^{-1})^T \rVert_2
    \,=\, \sqrt{\lVert P^{-1} (P^{-1})^T \rVert_2} \, = \, \sqrt{\lVert (P^TP)^{-1} \rVert_2}\\
    \,&=\,\sqrt{\lVert V^{-1}\rVert_2} \,\leq\,\sqrt{\lVert V^{-1} \rVert_1}\,\leq\,\lVert V^{-1} \rVert_1. \nonumber
\end{align}
    Here the first equality uses that $\|A^T\|_2 = \|A\|_2$ and the second equality is the $C^*$-identity $\|A\|^2_2 = \|A^T A\|_2$, both applied with $A = (P^{-1})^T$.  
    The first inequality is Equation~\eqref{eqn:2smaller1}, which we can apply with $A=V^{-1}$ since $V^{-1}$ is symmetric because $V$ is. The last inequality holds because $\lVert V^{-1} \rVert_1$ is a non-negative integer.
Next, we observe that  
\begin{equation}\label{eqn:some-more-very-basic-stuff}
|Px|=\sqrt{(Px)^TPx}=\sqrt{x^TVx}\leq x^TVx,
\end{equation}
where the last inequality holds because $x^TVx$ is a non-negative integer.
Combining this, we estimate that
\begin{equation*}
|x|  \,=\,| P^{-1} P x| \,\leq\,  \Vert P^{-1}\Vert_2 \cdot | Px| \, \leq\, \Vert V^{-1}\Vert_1 \, (x^TVx),  
\end{equation*}
as desired. 
Here the first inequality uses a property of any operator norm $\|-\|_{\operatorname{op}}$ that $|Ay| \leq \|A\|_{\operatorname{op}}\cdot |y|$. The second inequality combines Equations~\eqref{eqn:basic-properties} and~\eqref{eqn:some-more-very-basic-stuff}.  
\end{proof}

\begin{proof}[Proof of \cref{lem:definitecase}]
    Let $V, V'\in\GL(m,\Z)$ be positive definite, symmetric matrices. For the standard basis $e_1,\ldots,e_m$ of $\Z^m$, we define the integer
    $$R:= \max \big\{e_i^T V' e_i \,:\, 1\le i \le m\big\}.$$
    If $f\colon(\Z^m, V')\to (\Z^m, V)$ is an isometry, then
    $f(e_i)^TVf(e_i)=e_i^TV'e_i\leq R$,
    and thus $f$ maps the basis vectors~$\{e_1, \ldots, e_m\}$ into the set
    $$F:= \big\{x\in \Z^m \,:\, x^T V x \le R\big\}.$$
    Since $V$ is positive definite, Lemma~\ref{lem:upperbound} implies that the set~$F$ is contained in the finite ball
    \[
    B:=\big\{x \in\Z^m\,:\, |x|\leq \Vert V^{-1}\Vert_1 R\big\}.
    \]
    It follows that every isometry $(\Z^m,V')\to (\Z^m,V)$ is given by an integral, unimodular matrix whose columns are lattice points contained in $B$. 

    The algorithm to check if $V$ and $V'$ are congruent thus works as follows. Since $V$ and $V'$ are integral matrices, we can compute $\Vert V^{-1}\Vert_1 R$. Then we can build all (finitely many) $(m\times m)$-matrices with column vectors in $B$. If there exists such a matrix~$A$ that is unimodular with $V' = A^TVA$, we conclude that the forms $V'$ and~$V$ are isometric; otherwise they are not.
    \end{proof}

 Combining the various algorithms from this section, we obtain the claimed main result of this section.

 \begin{proof}[Proof of \cref{prop:intersectionform}]
    Let $V$ and $V$ be integer, symmetric, unimodular matrices. First, we use \cref{cor:alg-for-signature} to determine the definiteness status of $V$ and $V'$. If $V$ and $V'$ have different definiteness statuses the matrices are not congruent.

    If both matrices are indefinite, we use \cref{cor:indefinitecase} to determine if $V$ and $V'$ are congruent. If both matrices are positive definite we use \cref{lem:definitecase} to determine if $V$ and $V'$ are congruent. If both matrices are negative definite, we observe that $-V$ and $-V'$ are positive definite and congruent if and only if $V$ and $V'$ are congruent, and thus we can reduce this case to the positive definite case.    
 \end{proof}


\section{The homeomorphism problem for simply connected 4-manifolds}\label{section:proof-main-thm}

With the algorithms to compute the Kirby--Siebenmann invariant and to compare intersection forms, we can now deduce our main result. We recall, that by \cref{thm:topKirby} a framed, unimodular link determines the oriented homeomorphism type of a closed, oriented, simply connected, topological $4$-manifolds $X_L$.

\begin{theorem}\label{thm:main}
    There exists an algorithm that 
    \begin{itemize}
        \item takes as input two framed, unimodular links $L$ and $L'$, and
        \item outputs whether or not $X_L$ and $X_{L'}$ are orientation-preserving homeomorphic.
    \end{itemize} 
\end{theorem}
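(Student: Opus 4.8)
The plan is to assemble the three algorithmic ingredients from the previous sections—extracting the intersection form (\cref{lem:computing_intersection_from}), computing the Kirby--Siebenmann invariant (\cref{lem:Ks_algo}), and deciding $\Z$-congruence of integral, symmetric, unimodular matrices (\cref{prop:intersectionform})—and then to invoke Freedman's classification (\cref{thm:freedman}) to conclude that, taken together, these constitute a complete and computable invariant of the oriented homeomorphism type of $X_L$.

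Concretely, the algorithm runs as follows. First, apply the algorithm of \cref{lem:computing_intersection_from} to $L$ and to $L'$, producing integral, symmetric, unimodular matrices $V$ and $V'$ that represent $\lambda_{X_L}$ and $\lambda_{X_{L'}}$ in the preferred bases of $H_2(X_L)$ and $H_2(X_{L'})$ induced by the handle parts (\cref{lem:linking_equal_intersection}). Second, apply the algorithm of \cref{lem:Ks_algo} to $L$ and to $L'$ to obtain $\ks(X_L),\ks(X_{L'})\in\Z/2$. Third, if $\ks(X_L)\neq\ks(X_{L'})$, output that $X_L$ and $X_{L'}$ are not orientation-preserving homeomorphic. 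Otherwise, run the algorithm of \cref{prop:intersectionform} on the pair $(V,V')$ and output that $X_L$ and $X_{L'}$ are orientation-preserving homeomorphic precisely when $V$ and $V'$ are congruent over $\Z$.

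Correctness is checked in both directions. If the algorithm answers ``yes'', then $\ks(X_L)=\ks(X_{L'})$ and, by \cref{lem:linking_equal_intersection}, a $\Z$-congruence $V=P^TV'P$ yields an isometry $(H_2(X_L),\lambda_{X_L})\to(H_2(X_{L'}),\lambda_{X_{L'}})$; \cref{thm:freedman}(2) then produces an orientation-preserving homeomorphism $X_L\to X_{L'}$. Conversely, the Kirby--Siebenmann invariant is a homeomorphism invariant, so unequal values preclude any homeomorphism; and an orientation-preserving homeomorphism $X_L\to X_{L'}$ induces an isometry of intersection forms, hence a $\Z$-congruence between $V$ and $V'$. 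Thus neither a ``no'' from the Kirby--Siebenmann comparison nor a ``no'' from \cref{prop:intersectionform} can occur when an orientation-preserving homeomorphism exists.

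Since all the substantial work was carried out in \cref{section:computing-Kirby-Siebenmann} and \cref{section:intersection-form-algorithm}, I do not expect any genuine obstacle in the proof of \cref{thm:main} itself; the only point requiring care is the verification that the two computed invariants form a complete invariant, which is exactly Freedman's theorem. (There is a harmless subtlety: for even forms, \cref{thm:freedman}(1) forces $\ks(X_L)\equiv\tfrac{1}{8}\sigma(X_L)\bmod 2$, so in the even case the Kirby--Siebenmann comparison is automatically consistent with the signature, which is itself determined by the intersection form; but since the algorithm simply computes and compares all quantities directly, no case distinction is needed.)
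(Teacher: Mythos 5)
Your proposal matches the paper's proof essentially verbatim: compute the intersection-form matrices via \cref{lem:computing_intersection_from}, compute the Kirby--Siebenmann invariants via \cref{lem:Ks_algo}, compare them using \cref{prop:intersectionform}, and conclude via Freedman's classification (\cref{thm:freedman}). Your additional remark about the even-form constraint is a harmless observation, not a deviation.
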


\begin{proof}
    Let $L$ and $L'$ be two framed, unimodular links.  
    By Freedman's Theorem~\ref{thm:freedman}
    the two manifolds $X_L$ and $X_{L'}$ are orientation-preserving homeomorphic if and only if they share the same Kirby--Siebenmann invariant and their intersection forms are isometric.

    By \cref{lem:computing_intersection_from} we can compute matrices $V$ and $V'$ representing the intersection forms of $X_L$ and $X_{L'}$ and using \cref{prop:intersectionform} we can decide whether or not these two forms are isometric. Furthermore, we can compute the Kirby--Siebenmann invariants of $X_L$ and $X_{L'}$ from the framed, unimodular links, see~\cref{lem:Ks_algo}. 
    
    If both, the Kirby--Siebenmann invariants agree and the intersection forms are isometric, the manifolds $X_L$ and $X_{L'}$ are orientation-preserving homeomorphic. Otherwise, they are not orien\-ta\-tion-preserving homeomorphic.
\end{proof}

\begin{remark}
    We briefly discuss the runtime of the algorithm from \cref{thm:main}. It seems that the bottleneck of our algorithm is in the algorithm from \cref{lem:definitecase}, which compares two definite intersection forms. There we are enumerating all matrices with columns from a finite set whose cardinality grows in the input size, and thus this algorithm has factorial runtime. On the other hand, most other parts of the algorithm from \cref{thm:main} actually run in polynomial time. 
    
    We also observe that if the input manifolds both admit smooth structures, then the runtime of the above algorithm can be improved drastically. Indeed, if $X_1$ and $X_2$ both admit smooth structures, then their Kirby--Siebenmann invariants vanish and thus we do not need to run through the algorithm from \cref{lem:Ks_algo}. Moreover, Donaldson's theorem~\cite{Donaldson} implies that if the intersection form of a closed, oriented, smooth manifold is positive definite then in some basis it is given by the identity matrix. Thus in the smooth case, it follows together with \cref{cor:indefinitecase} that two intersection forms are isometric if and only if they have the same rank, parity, and signature. 
\end{remark}


\section{An algorithm for stably classifying smooth 4-manifolds}\label{sec:stable_algo}

The aim of this section is to provide companion algorithms to the rest of the article which classify certain classes of closed, smooth $4$-manifolds up to stable diffeomorphism. The stable classification is coarser, and hence easier to compute. This means that we can leave the realm of simply connected $4$-manifolds. In this section, all manifolds are assumed to be connected, closed, oriented, and smooth. We will consider \emph{pointed} manifolds without further comment.  

\subsection{Input}\label{subsec:Input}

Before stating our main theorems we briefly explain how we can input smooth $4$-manifolds with non-trivial fundamental groups into algorithms.
The setup in the previous sections of the paper allows us to input closed, simply connected, topological $4$-manifolds into algorithms. This relies on the theorem saying that every closed, simply connected, topological $4$-manifold can be written as a smooth piece union a contractible topological piece (see \cref{thm:topKirby}). This is not known to hold for closed, topological $4$-manifolds with other fundamental groups, but is known to hold stably, i.e.\ every topological 4-manifold is stably homeomorphic to a smooth manifold union a contractible piece.\footnote{If the manifold $X$ is stably smoothable then the statement is clear.  If it is not consider $X\# \ol{E_8}\# E_8$, which is stably homeomorphic to $X$. Now $E_8$ is smooth away from a contractible piece $C$, and so we can stably smooth $X\# \ol{E_8} \# E_8$ away from $C$.}  Likely one could combine the work of the previous sections and this one to obtain results about stable homeomorphism for general topological 4-manifolds, but this would require us to input smooth data that is only abstractly known to emerge from stabilisations.  Since this is somewhat unsatisfactory, we will instead only consider smooth 4-manifolds as inputs.

For our purpose a \textit{triangulated} $4$-manifold is a compact simplicial complex whose underlying topological space is homeomorphic to a closed $4$-manifold. Also, we need to work with \emph{oriented} manifolds, so our triangulations will come with oriented $4$-simplices, i.e.\ for each $4$-simplex we are given an equivalence class of ordering of its vertices, where two orderings are considered the same if one can be obtained from the other by an even number of swaps. The orientations need to fit together on the $3$-simplices. 

In fact, a triangulation determines a unique smooth structure on the underlying $4$-manifold.

\begin{lemma}
Let $\mathcal{K}$ be an oriented triangulation of a $4$-manifold $X$. Then $\mathcal{K}$ induces a preferred smooth structure on $X$, that is unique up to orientation-preserving diffeomorphism.
\end{lemma}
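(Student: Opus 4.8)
The plan is to invoke the classical smoothing theory for manifolds of low dimension, specifically the fact that in dimensions $\le 3$ every PL manifold carries an essentially unique smooth structure, combined with the uniqueness of the PL structure underlying a triangulation in dimension $\le 4$. First I would note that the simplicial complex $\mathcal{K}$ is in particular a PL structure on $X$: since the underlying space is a topological $4$-manifold, the link of every vertex is a simplicial complex which, by the Hauptvermutung-type results available in low dimensions (for instance the combinatorial characterisation coming from the fact that a triangulated $4$-manifold has PL-sphere vertex links — here one can cite the work on triangulations of $4$-manifolds, e.g.\ \cite{munkres_1, munkres_2} or the Whitehead/Cerf references already cited in \cref{sec:input}), is a combinatorial $3$-sphere, so $\mathcal{K}$ is a combinatorial manifold and hence a genuine PL manifold. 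The orientation data on the top simplices equips this PL structure with an orientation.

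Next I would pass from PL to smooth. By the smoothing theorems of Whitehead, Hirsch--Mazur, and Munkres (cited in the excerpt as \cite{Whitehead,Hirsch_Mazur,munkres_1,munkres_2,cerf}), every PL manifold of dimension $\le 7$ admits a compatible smooth structure, and in dimension $\le 6$ this smooth structure is unique up to (PL-)isotopy, hence up to diffeomorphism; moreover the smoothing can be done compatibly with a given orientation, and the resulting orientation-preserving diffeomorphism type is well-defined. Applying this in dimension $4$ gives a smooth structure on $X$ canonically associated to $\mathcal{K}$, unique up to orientation-preserving diffeomorphism. This is exactly the assertion of the lemma.

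The main step requiring care — and the one I would expect to be the principal obstacle to writing cleanly — is the first one: verifying that an arbitrary simplicial triangulation of a topological $4$-manifold is automatically a \emph{combinatorial} triangulation (i.e.\ a PL manifold), so that PL smoothing theory applies. In higher dimensions this fails, but in dimension $4$ it is a theorem, ultimately resting on the fact that the link of a vertex in a triangulated $4$-manifold is a homotopy $3$-sphere, hence (by Perelman) a genuine $3$-sphere, and every triangulated $3$-sphere is combinatorial (this is where one uses that the Hauptvermutung holds in dimension $\le 3$). I would state this carefully with the appropriate citations rather than reprove it. The remaining smoothing-theory input is standard and I would simply quote it; the orientation bookkeeping (that the chosen orientation of $\mathcal{K}$ determines the orientation of the smooth structure, and that "unique up to orientation-preserving diffeomorphism" is the right statement) is routine and I would only remark on it.
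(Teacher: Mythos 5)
Your proof is correct and follows essentially the same two-step route as the paper: first, use Perelman's resolution of the three-dimensional Poincar\'e conjecture to conclude that a triangulation of a topological $4$-manifold is automatically a combinatorial (PL) triangulation, and second, apply the classical PL-to-smooth smoothing theory in dimension four (Whitehead, Hirsch--Mazur, Munkres, Cerf) to obtain an essentially unique compatible smooth structure. You unpack the first step slightly more explicitly (vertex links are homotopy $3$-spheres, hence genuine $3$-spheres, and triangulated $3$-spheres are combinatorial) where the paper simply cites the literature, but the argument is the same.
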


\begin{proof}
    From the resolution of the Poincar\'{e} conjecture in dimension three~\cite{perelman1,perelman2} it follows that in
     dimension four, a triangulated manifold can be given an essentially unique PL-structure. We refer to \cite[Chapter 3]{guide} for more details. In dimension four a PL-manifold can be given an essentially unique smooth structure (due to \cite{Hirsch_Mazur, munkres_1, munkres_2, cerf}, cf.\ \cite[Theorem 8.3B]{FQ}). 
\end{proof}

From an algorithmic point of view triangulations and Kirby diagrams are equivalent as the following lemma shows.

\begin{lemma}\label{rem:equivalence_Kirby_triangulation}
   There exist an algorithm that 
   \begin{itemize}
       \item takes as input an oriented triangulation $($Kirby diagram$)$ of a closed, 
       oriented, connected,  
       smooth $4$-manifold $X$, and
       \item outputs a Kirby diagram $($triangulation$)$ of $X$.
   \end{itemize}
\end{lemma}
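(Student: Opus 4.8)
The plan is to prove both directions of \Cref{rem:equivalence_Kirby_triangulation} by going through a common intermediate object, namely a smooth handle decomposition of $X$. The key inputs are the results cited in the preceding lemma: a $4$-dimensional triangulation determines an essentially unique PL structure, and in dimension four a PL structure determines an essentially unique smooth structure; conversely a smooth $4$-manifold admits a compatible PL and triangulated structure. These passages are effective: one can list all subdivisions, all PL charts, and so on, up to any finite complexity, and search for the required compatibility data, since each of the relevant notions (being a simplicial isomorphism, being a PL homeomorphism between finite complexes, being a smooth atlas refining a given one) is recursively enumerable. So at the level of existence of \emph{some} algorithm (our pragmatic standard, per the note on algorithms in the introduction), a brute-force enumerate-and-check strategy suffices in both directions; the content of the lemma is just that the bridges between the three categories in dimension four are known to exist and are constructive.

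Concretely, first I would treat the direction \emph{triangulation $\to$ Kirby diagram}. Given an oriented triangulation $\mathcal K$ of $X$, pass to the induced PL structure and, via PL Morse theory / the existence of PL handle decompositions (e.g.\ \cite[Chapter 3]{guide}), algorithmically produce a smooth handle decomposition of $X$: one can take the handle decomposition dual to a subdivision, or use that a triangulated $4$-manifold collapses in a controlled way. From a handle decomposition one reads off a Kirby diagram in the usual way (\cite[Chapter 4]{GS}): the $1$-handles give dotted circles (or, since we may always cancel or trade, we can arrange a standard form), the $2$-handles give a framed link, and the $3$- and $4$-handles are attached uniquely up to diffeomorphism by \cite{LP}. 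Each of these steps is a finite combinatorial manipulation of the simplicial data, hence algorithmic. For the direction \emph{Kirby diagram $\to$ triangulation}, a Kirby diagram is by definition a finite handle presentation of a smooth $4$-manifold $X$; thicken each handle to a smooth ball glued along a smooth (hence PL, after smoothing corners) attaching region, obtain a PL handle decomposition, and then triangulate the resulting PL manifold, for instance by triangulating each handle compatibly on overlaps — again a finite, effective procedure — and finally choose orientations on the top simplices consistently, which is possible since $X$ is oriented and the construction is orientation-compatible.

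An alternative, and perhaps cleaner, way to phrase the whole argument avoids explicit handle bookkeeping: both ``$X$ has a triangulation PL-isomorphic (after subdivision) to a given one'' and ``$X$ has a Kirby diagram isotopic to a given one'' are decidable-up-to-semidecidability predicates, and since \emph{every} smooth $4$-manifold has both kinds of presentation and any two presentations of diffeomorphic manifolds are connected by a finite sequence of standard moves (Pachner moves on triangulations; Kirby moves plus isotopy on diagrams), one can run the two enumerations in parallel and halt when a matching pair is found. This uses the smoothing/triangulation theorems quoted in the previous lemma to guarantee termination. I would include whichever version the authors prefer; the handle-theoretic one is more self-contained, the enumeration one is shorter.

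The main obstacle is not any deep topology — everything needed is already quoted — but rather making the effectivity explicit without drowning in combinatorial detail. In particular one must be slightly careful that the smoothing theory of \cite{Hirsch_Mazur, munkres_1, munkres_2, cerf} and the PL-triangulation equivalence are used only in the form ``the relevant structures and the equivalences between them are finitely describable and algorithmically recognisable,'' and that the Laudenbach--Poenaru theorem \cite{LP} is invoked to know that the $3$- and $4$-handles (equivalently, the filling of $\#_m(S^1\times S^2)$) contribute nothing beyond what the framed link already records — so that the output Kirby diagram is genuinely a complete piece of data. I would state these points briefly and then assert, in keeping with the paper's stated conventions, that the translation to a concrete Turing machine is routine.
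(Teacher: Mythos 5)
Your second ("alternative, and perhaps cleaner") argument is essentially what the paper does: the direction Kirby diagram $\to$ triangulation is treated as straightforward (it is even implemented in Regina), and for the converse the paper simply enumerates Kirby diagrams, triangulates each, and searches for a Pachner-move equivalence with the input, which must terminate because every smooth $4$-manifold has some Kirby diagram and any two triangulations of it are Pachner-equivalent. Your first, more constructive route (triangulation $\to$ PL handle decomposition via dual cells $\to$ Kirby diagram in standard form) is not what the paper does, and its final step — algorithmically reducing the many-handled dual decomposition to a one $0$-handle presentation and reading off dotted circles and a framed link — is glossed over; making that reduction effective is nontrivial and is precisely the bookkeeping the paper's enumeration argument is designed to avoid. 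Since you yourself defer to the enumeration version, the proposal is correct and matches the paper's proof.
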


\begin{proof}
    It is straightforward to describe algorithms to create a triangulation out of a Kirby diagram. (This is even practically implemented in Regina \cite{burke2024practicalsoftware}.) On the other hand, since any two triangulations of the same smooth manifold are related by Pachner moves and the set of Kirby diagrams is countable, there also exists an (impractical) algorithm to create a Kirby diagram out of a triangulation.
\end{proof}

\subsection{Algorithms for stable diffeomorphism}

We are now ready to state our main results of this section.

\begin{theorem}\label{thm:mainAlmostSPinOrSpin}
    There exists an algorithm that 
    \begin{itemize}
        \item takes as input oriented triangulations of closed, oriented, smooth $4$-manifolds $X_1$ and $X_2$ such that either 
        \begin{enumerate}
            \item\label{item:thm-8-3-i} their fundamental groups are both isomorphic to the infinite cyclic group,
            \item\label{item:thm-8-3-ii} or $X_1$ has a finite fundamental group with a cyclic Sylow $2$-subgroup and $X_2$ has a finite fundamental group, and
        \end{enumerate}
        \item outputs whether or not $X_1$ and $X_2$ are orientation-preserving stably diffeomorphic. 
    \end{itemize}
\end{theorem}

\begin{theorem}\label{thm:mainNon-Spin}
    There exists an algorithm that 
    \begin{itemize}
        \item takes as input oriented triangulations of closed, oriented, smooth $4$-manifolds $X_1$ and $X_2$, such that both universal covers $\widetilde{X}_1$ and $\widetilde{X}_2$ are not spin and such that
        \begin{enumerate}
            \item\label{item:thm-8-4-i} their fundamental groups are isomorphic and of homological dimension $\leq 3$,
            \item\label{item:thm-8-4-ii} or their fundamental groups are both finite, and
        \end{enumerate}
        \item outputs whether or not $X_1$ and $X_2$ are orientation-preserving stably diffeomorphic. 
    \end{itemize} 
\end{theorem} 

We note that in \cref{thm:mainAlmostSPinOrSpin,thm:mainNon-Spin} above, in the case of finite $\pi_1$ (i.e.\ in case (2) in each theorem) we do not need 
to assume that the manifolds have \emph{isomorphic} fundamental groups because in this 
case the word problem is decidable (see \cref{thm:WordProblemFinitePi}). 

We recall the basics of modified surgery in \cref{subsec:Input}. The main result of this theory (for our interests) is \cref{thm:stableclassif}, which states that the set of stable diffeomorphism classes of $4$-manifolds with a fixed 1-type $\xi$ (\cref{def:normal1types}) is in one-to-one correspondence with $\Omega^{\xi}_4/\operatorname{Out}(\xi)$, a quotient of a certain bordism group. If either of the assumptions of \cref{thm:mainAlmostSPinOrSpin,thm:mainNon-Spin} is satisfied, we will prove that the following map given by the signature $\sigma$, and the primary invariant $\pri$ (given by evaluating the fundamental class)
\[\Omega_4^{\xi}\xrightarrow{\sigma+\pri} \Z\oplus H_4(B\pi)\]
is injective. We show that these invariants are both computable from the input data. 

\subsection{Combinatorial modified surgery}

There is some additional setup needed that we introduce next.  Let $BSO$ denote the classifying space of the direct limit of the special orthogonal groups $SO(n)\hookrightarrow SO(n+1)$.  Modified surgery, developed by Kreck \cite{Kreck}, stably classifies manifolds by considering certain approximation spaces, denoted by $B$, that admit a highly coconnected map to $BSO$. In the end, $B$ will be an approximation of a given manifold $M$, in the sense that $B$ will be a Moore--Postnikov approximation for the stable normal bundle of $M$.  We make this all precise now.

\begin{definition}
    Let $\xi\colon B\to \BSO$ be a map from some CW-complex $B$ with finite $k$-skeleton for all $k$. We say that $\xi$ is a \emph{universal fibration} if it is a 2-coconnected fibration. In particular, this means that $\xi_*\colon \pi_k(B)\to \pi_k(\BSO)$ is injective for $k=2$ and an isomorphism for $k\geq 3$.
\end{definition}

\begin{definition}\label{def:normal1types}
    Let $\xi\colon B\to \BSO$ be a universal fibration and let $\nu_X\colon X\to \BSO$ be the stable normal bundle for a closed smooth $4$-manifold $X$.  We say that $\xi$ is a \emph{normal 1-type} for $X$ if there exists a 2-connected lift $\ol{\nu}_X$ such that the diagram
    \[
    \begin{tikzcd}
        & B \arrow[d,"\xi"] \\
        X \arrow[ur,"\ol{\nu}_X"] \arrow[r,"\nu_X"'] & \BSO
    \end{tikzcd}
    \]
    commutes. We call a  choice of $\ol{\nu}_X$ a \emph{normal 1-smoothing}. We call a pair $(X,\ol{\nu}_X)$ consisting of a manifold and a normal $1$-smoothing a $\xi$-manifold. A $\xi$-diffeomorphism of two $\xi$-manifolds is a diffeomorphism of the underlying manifold that commutes with the normal $1$-smoothing.
\end{definition}

\begin{theorem}[Kreck \cite{Kreck}]\label{thm:stableclassif}
    Let $\xi$ be a universal fibration. The stable $\xi$-diffeomorphism classes of $\xi$-manifolds are in one-to-one correspondence with elements of the bordism group $\Omega_4(\xi)$.\qed
\end{theorem}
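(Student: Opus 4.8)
The statement is Kreck's modified surgery theorem, so the plan is to recall the structure of his argument. The correspondence sends a $\xi$-manifold $(X,\ol\nu_X)$ to the class of the singular manifold $\ol\nu_X\colon X\to B$ in $\Omega_4(\xi)$. For well-definedness I would first check that connected sum with $S^2\times S^2$ does not change the $\xi$-bordism class: equipping $S^2\times S^2$ with a compatible normal $1$-smoothing, one observes that it bounds $S^2\times D^3$ over $B$ (possible since $\xi$ is a $2$-coconnected fibration, so the relevant $\pi_2$-class lives in $B$), hence a stable $\xi$-diffeomorphism yields, after attaching product bordisms, a $\xi$-bordism between the original manifolds. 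Surjectivity is the usual argument: given a class in $\Omega_4(\xi)$, represent it by a map $Z^4\to B$ lifting $\nu_Z$, and perform surgery on the interior of $Z$ below the middle dimension (framed using the $\BSO$-data) to upgrade the lift to a normal $1$-smoothing without changing the bordism class.

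The substantive point is injectivity. Starting from a $\xi$-bordism $(W^5;M_0,M_1)$ with structure map $\ol\nu_W$ restricting to $\ol\nu_0$ and $\ol\nu_1$, the first step is surgery on the interior of $W$ below the middle dimension. Here I would use that $\dim W=5$ (so the Whitney trick is available), that $\ol\nu_0,\ol\nu_1$ are $2$-connected, and that $\xi$ is $2$-coconnected, in order to kill $\pi_1$ and the relevant part of $\pi_2$ of $\ol\nu_W$; the outcome is that $W$ admits a handle decomposition relative to $M_0$ using only handles of index $2$ and $3$. Equivalently, there is a middle level $N^4\subset W$ obtained from $M_0$ by surgery along a framed $1$-manifold $\ell_0\subset M_0$ and from $M_1$ by surgery along a framed $1$-manifold $\ell_1\subset M_1$ (the attaching data of the $2$-handles, read from each end), and the $2$-connectedness forces the components of $\ell_0$ and $\ell_1$ to be null-homotopic. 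One cannot cancel the $2$- and $3$-handles outright, because the corresponding Whitney moves would take place in the $4$-dimensional level sets, where the trick fails; this is precisely why the conclusion is only a \emph{stable} diffeomorphism. Instead I would introduce cancelling $2/3$-handle pairs — which geometrically amounts to connected sum of $M_0$, respectively $M_1$, with copies of $S^2\times S^2$ — and use handle slides to bring $\ell_0$ and $\ell_1$ into the standard form of trivial circles, each carrying the framing whose surgery produces an $S^2\times S^2$ summand rather than an $\Stwotilde$ summand, using the normal $1$-type $\xi$ to pin down the framings. Reading off the middle level from both sides then yields $\xi$-diffeomorphisms $M_0\#k(S^2\times S^2)\cong N\cong M_1\#k(S^2\times S^2)$, which compose to the required stable $\xi$-diffeomorphism.

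The hard part is this last step: controlling the framed surgery curves $\ell_0,\ell_1$ up to stabilization. This is where modified surgery genuinely differs from classical surgery — one does not attempt to turn $W$ into an $h$-cobordism (which is obstructed in general, and would in any case be far too strong), but only to make it trivial after connected-summing the two ends with copies of $S^2\times S^2$, for which the middle-dimensional obstruction disappears. Keeping the framings under control so that no $\Stwotilde$-summands are forced on us, together with the $\Z[\pi_1]$-module bookkeeping for the middle-dimensional handles, is the technical core of Kreck's proof; everything else is standard surgery-theoretic manipulation inside a $5$-manifold.
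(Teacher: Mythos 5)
The paper does not prove this theorem: it is cited from Kreck~\cite{Kreck}, as the end-of-proof symbol immediately following the statement indicates. So there is no internal proof to compare your sketch against, but as a summary of Kreck's modified surgery argument it is essentially accurate. You correctly identify the bordism map, surgery below the middle dimension to realize every class in $\Omega_4(\xi)$ by a $\xi$-manifold and to simplify a $5$-dimensional $\xi$-bordism $W$ to one built from $2$- and $3$-handles only, and the crucial observation that cancelling these handles would require Whitney moves inside the $4$-dimensional middle level, which is exactly why the conclusion is only stable diffeomorphism.

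One caveat on the last step deserves flagging. Whether surgery on a null-homotopic framed circle in the middle level produces an $S^2\times S^2$ or an $\Stwotilde$ summand is determined by the $\xi$-structure restricted from $W$, and is not something one arranges freely by handle slides. In the spin normal $1$-type the lift to $\BSpin$ forces the framing producing $S^2\times S^2$; in the totally non-spin and almost spin types one may a priori also obtain $\Stwotilde$ summands, which are then traded away using the diffeomorphism $X\#\Stwotilde\cong X\#(S^2\times S^2)$, valid for any non-spin $X$. You correctly identify this framing control (together with the $\Z[\pi_1]$-module bookkeeping) as the technical core of Kreck's proof, but the mechanism is this case analysis governed by the normal $1$-type rather than handle slides alone.
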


Since our manifolds are oriented we get that $w_2(X)$, the second Stiefel--Whitney class of $TX$, agrees with the second Stiefel--Whitney class of the stable normal bundle of $X$. We will work with the tangential classes since it is more convenient. 

\begin{theorem}[Various normal 1-types, Teichner \cite{teichnerthesis}]
    Let $\pi$ be a finitely presented group, let $X$ be an orientable, smooth $4$-manifold with $\pi_1(X)$ isomorphic to $\pi$ and let $\widetilde{X}$ denote the universal cover of $X$. Then the possible normal $1$-types for $X$ fit into the following three cases.
    \begin{itemize}
        \item[Type I]  \emph{(totally non-spin).}  If $w_2(\widetilde{X})\neq0$, the normal 1-type of $X$ is equivalent to 
        \[
        \begin{tikzcd}
            \BSO \times B\pi \arrow[r,"\xi"] &
            \BSO,
        \end{tikzcd}
        \]
        where the universal fibration $\xi$ is given by forgetting $B\pi$. A normal $1$-smoothing is given by a choice of orientation of $X$ and an isomorphism $\pi_1(X)\to \pi$.
        \item[Type II] \emph{(spin).} If $w_2(X)=0$, the normal $1$-type is equivalent to 
        \[
        \begin{tikzcd}
            \BSpin \times B\pi \arrow[r,"\xi"] &
            \BSO,
        \end{tikzcd}
        \]
        where the universal fibration $\xi$ is induced by the natural map from $\Spin \to \SO$ after first forgetting $B\pi$.  A normal $1$-smoothing is given by a choice of spin structure on $X$ and an isomorphism $\pi_1(X)\to \pi$.
        \item[Type III] \emph{(almost spin).}  If $w_2(X)\neq 0$, but $w_2(\widetilde{X})=0$, there is an element $w_2^\pi\neq 0 \in H^2(B\pi;\Z/2)$, such that $w_2(X)$ is pulled back from $w_2^\pi$ along a 2-connected map $X \to B\pi$.
        The normal $1$-type $\xi$ is equivalent to the map given by the homotopy pullback
        \[
        \begin{tikzcd}
            B \arrow[d, "\xi"] \arrow[r] \arrow[dr, phantom, "\scalebox{1.5}{\color{black}$\lrcorner$}" , very near start, color=black] & B\pi \arrow[d,"w_2^\pi"] \\
            \BSO \arrow[r," w_2"'] & K(\Z/2,2)
        \end{tikzcd}
        \]
        where $w_2$ denotes the universal map for the second Stiefel--Whitney class.  A $1$-smoothing is given by an isomorphism $\pi_1(X)\to \pi$ such that $w_2(X)$ pulls back from $w_2^\pi$.\qed
    \end{itemize}
\end{theorem}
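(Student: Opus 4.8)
The plan is to identify the normal $1$-type of $X$ with the second stage of the Moore--Postnikov tower of the map $\nu_X\colon X\to\BSO$ classifying the stable normal bundle, and then to recognise each of the three cases from the behaviour of the second Stiefel--Whitney class on $X$ and on its universal cover. Write $\xi\colon B\to\BSO$ for this normal $1$-type and $\overline{\nu}_X\colon X\to B$ for the $2$-connected lift; existence and uniqueness (up to equivalence over $\BSO$ and under $X$) of such a factorisation is the standard theory of Moore--Postnikov towers, so the content is the identification. From the fact that $\xi$ is a $2$-coconnected fibration and $\overline{\nu}_X$ is $2$-connected one reads off at once that $\pi_1(B)\cong\pi$, that $\xi_*\colon\pi_2(B)\hookrightarrow\pi_2(\BSO)=\Z/2$, that $\pi_k(B)\cong\pi_k(\BSO)$ for $k\geq 3$, and that $\overline{\nu}_X^*$ is an isomorphism on $H^{\leq 1}(-;\Z/2)$ and injective on $H^2(-;\Z/2)$.

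The first, and decisive, step is the dichotomy: $\pi_2(B)\cong\Z/2$ precisely when $w_2(\widetilde X)\neq 0$, and $\pi_2(B)=0$ otherwise. I would prove this by analysing the composite $\pi_2(X)\xrightarrow{(\overline{\nu}_X)_*}\pi_2(B)\xrightarrow{\xi_*}\pi_2(\BSO)=\Z/2$. The first arrow is surjective, so this composite is nonzero if and only if $\xi_*$ is nonzero, i.e.\ if and only if $\pi_2(B)\cong\Z/2$. On the other hand, on a class represented by $f\colon S^2\to X$ the composite returns $\langle w_2(X),f_*[S^2]\rangle\in\Z/2$ (using $w_2(\nu_X)=w_2(X)$ for oriented $X$); since $\widetilde X$ is simply connected, the Hurewicz theorem together with the surjectivity of $H_2(\widetilde X;\Z)\to H_2(\widetilde X;\Z/2)$ shows that this pairing is nonzero for some $f$ exactly when $w_2(\widetilde X)=\pi^*w_2(X)\neq 0$, where $\pi\colon\widetilde X\to X$ is the covering.

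Given the dichotomy, each of the three cases is handled by writing down the claimed model of $B$ and checking that the comparison map is a weak equivalence on homotopy groups, using repeatedly that $\BSpin$ is simultaneously the $2$-connected cover of $\BSO$ and the homotopy fibre of $w_2\colon\BSO\to K(\Z/2,2)$. If $w_2(\widetilde X)\neq 0$, then $\xi$ is an isomorphism on $\pi_k$ for all $k\geq 2$ while $\pi_1(B)\cong\pi$, so the map $(\xi,u)\colon B\to\BSO\times B\pi$, with $u$ classifying $\pi_1$, is a weak equivalence and $\xi$ becomes the projection; this is Type~I. If $w_2(\widetilde X)=0$ then $\pi_2(B)=0$, which forces $B\to B\pi$ to be $4$-connected, hence an isomorphism on $H^2(-;\Z/2)$, so the class $\xi^*w_2\in H^2(B;\Z/2)$ is pulled back from a unique $w_2^\pi\in H^2(B\pi;\Z/2)$. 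When in addition $w_2(X)=0$, injectivity of $\overline{\nu}_X^*$ on $H^2$ gives $\xi^*w_2=0$, hence $w_2^\pi=0$ and $\xi$ lifts to $\BSpin$, so $B\to\BSpin\times B\pi$ is a weak equivalence; this is Type~II. When $w_2(X)\neq 0$ (and $w_2(\widetilde X)=0$), then $w_2^\pi\neq 0$ and $\xi^*w_2=(B\to B\pi)^*w_2^\pi$, so a choice of homotopy between the two corresponding maps to $K(\Z/2,2)$ yields a map from $B$ to the homotopy pullback of $\BSO\xrightarrow{w_2}K(\Z/2,2)\xleftarrow{w_2^\pi}B\pi$, which a homotopy-group check identifies as a weak equivalence; this is Type~III, and the relation $w_2(X)=c^*w_2^\pi$ for the $2$-connected classifying map $c\colon X\to B\pi$ is immediate since $c$ factors as $X\xrightarrow{\overline{\nu}_X}B\to B\pi$. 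Finally I would unwind the definition of a normal $1$-smoothing --- a homotopy class of lift $\overline{\nu}_X$ --- into the concrete data quoted in each case: an orientation of $X$ and an isomorphism $\pi_1(X)\to\pi$ in all cases, a spin structure in Type~II, and the $w_2^\pi$-compatibility in Type~III, by inspecting the $B\pi$- and $\BSpin$-factors of the models just constructed.

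I expect the main obstacle to be the $w_2(\widetilde X)=0$ regime: one must carefully justify that $\pi_2(B)=0$ genuinely forces enough connectivity of $B\to B\pi$ to produce a well-defined $w_2^\pi$, and then verify both that the comparison map $B\to\BSO\times_{K(\Z/2,2)}B\pi$ is a weak equivalence and that this $w_2^\pi$ really satisfies $w_2(X)=c^*w_2^\pi$. The remaining difficulties are point-set bookkeeping (replacing maps by fibrations and choosing a CW model of $B\pi$ with the finite skeleta demanded by the notion of a universal fibration, which is where finite presentability of $\pi$ enters) together with invoking uniqueness of Moore--Postnikov factorisations to see that the normal $1$-type, and hence the entire trichotomy, is independent of the choices made.
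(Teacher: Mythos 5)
The paper does not give a proof of this theorem; it is stated with a \verb|\qed| and cited to Teichner's thesis \cite{teichnerthesis}. Your sketch is, up to details, exactly the argument there and in the standard modified-surgery literature: view $\xi\colon B\to\BSO$ as the second stage of the Moore--Postnikov tower of $\nu_X$, use $2$-connectedness of $\overline{\nu}_X$ and $2$-coconnectedness of $\xi$ to pin down $\pi_*(B)$, and branch on whether $\pi_2(B)$ is $\Z/2$ or $0$. Your dichotomy step --- identifying $\pi_2(B)\ne 0$ with $w_2(\widetilde X)\ne 0$ by evaluating $w_2$ on spherical classes, and using $\pi_2(X)\cong H_2(\widetilde X;\Z)\twoheadrightarrow H_2(\widetilde X;\Z/2)$ since $\widetilde X$ is simply connected --- is correct and is the crux. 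The remaining homotopy-group comparisons with $\BSO\times B\pi$, $\BSpin\times B\pi$, and the homotopy pullback $\BSO\times_{K(\Z/2,2)}B\pi$ all check out (including your observation that $\pi_2(B)=0$ together with $\pi_3(\BSO)=0$ forces $B\to B\pi$ to be $4$-connected, so $w_2^\pi$ is uniquely defined). The one place I would tighten the write-up is the final paragraph: the worry you flag there is already resolved by the connectivity computation you carried out two paragraphs earlier, and the unwinding of the $1$-smoothing data (orientation, spin structure, $\pi_1$-isomorphism, $w_2^\pi$-compatibility) deserves to be written out rather than asserted, since in Type~III the enumeration of lifts over the $B\pi$- and $\BSpin$-like factors is slightly subtler than in Types~I and~II.
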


It will be useful in our algorithms to have a `certificate' which states that the fundamental groups of the two manifolds in question are isomorphic.

\begin{definition}\label{def:proofPi1Isomorphic}
    Let $\pi$ be either a finite group or $\Z$, let $X$ be a triangulated manifold, and let $\varphi\colon\pi\to\pi_1(X)$ be an isomorphism.
  The following constitutes \emph{data for the isomorphism $\varphi\colon\pi\to \pi_1(X)$} for the finite case and the infinite cyclic case respectively. 
     \begin{enumerate}
        \item A map that assigns to every element $g$ in $\pi$ a based loop $\gamma_g$ in the $1$-skeleton of $X$, given by concatenation of edges, such that $\gamma_g$ is a representative of $\varphi(g)$.
        \item A based loop $\gamma$ in the $1$-skeleton of $X$, given by concatenating edges, such that $\gamma$ is a representative of $\varphi(1)$.
    \end{enumerate}
\end{definition}

\begin{definition}[1-type data]
    Let $\xi$ be a universal fibration, $\pi$ be a finite group, and let $X$ be a closed, smooth, oriented $4$-dimensional $\xi$-manifold with $\pi_1(X)$ isomorphic to $\pi$. We say that a \emph{1-type datum} for $X$ is a triple 
     \[(T_{\pi},(B\pi)^{(5)},w_2^{\pi}),\] 
     where $T_\pi$ is a multiplication table for the finite group $\pi$, $(B\pi)^{(5)}$ is the 5-skeleton of some triangulation of some $B\pi$, and $w_2^{\pi}$ is an element of the set  $H^2(B\pi^{(5)};\Z/2)\cup \{\infty\}$, where $\{\infty\}$ is a set with one element disjoint from the cohomology group $H^2(B\pi^{(5)};\Z/2)$. 
 We require that 
         $w_2^{\pi}=\infty$ if $w_2(\wt{X})\neq 0$,
         while if $w_2(\wt{X})=0$, we require that there exists a $\pi_1$-isomorphism $c\colon X\to B\pi^{(5)}$ such that $c^*(w_2^{\pi}) = w_2(X) \in H^2(X;\Z/2)$.
\end{definition}

\begin{definition}[Reduced 1-smoothing data] 

    For a closed, smooth, oriented $4$-dimensional $\xi$-manifold $X$, with triangulation $\mathcal{K}$, together with 1-type data $(T_{\pi}, (B\pi)^{(5)},w_2^{\pi})$, let $\iota\colon X\to B\pi_1(X)^{(5)}$ be the Postnikov truncation.  Then the \emph{reduced smoothing data} consists of a finite subset $A$ of the set of all maps 
    \[\{c\colon B\pi_1(X)^{(5)}\to B\pi^{(5)} \,:\,
w_2(X)=(c\circ\iota)^*(w_2^{\pi})\ \text{if}\ w_2^\pi\neq \infty\},\] 
such that
\begin{enumerate}
    \item all maps in $A$ when pre-composed with $\iota$ are simplicial with respect to the two-fold  barycentric subdivision of $\mathcal{K}$; 
    \item for every isomorphism $f\colon\pi_1(X)\to \pi$ with the property $\iota^*(Bf)^{*}(w_2^{\pi})=w_2(X)$ there is a map $c_f \colon B\pi_1(X)^{(5)} \to B\pi^{(5)}$ in $A$ with $(c_f)_* = f \colon \pi_1(X)\to \pi$.  
\end{enumerate} 
\end{definition}

We note that the reduced 1-smoothing data does \emph{not} give complete information about 1-smoothings, but the data is sufficient for our purposes.  The next lemma shows that reduced smoothing data can always be found. 

\begin{lemma}\label{lem:simplicialMap}
    Let $X$ be a triangulated $4$-dimensional manifold, let $\pi$ be a finite group and let $f\colon \pi_1(X)\to \pi$ be a homomorphism. Then there is a simplicial map between the two-fold barycentric subdivision of $X$ and the two-fold barycentric subdivision of the geometric realisation of the bar construction $B\pi$, which, on the level of fundamental groups, agrees with $f$.  Further, we can factor this simplicial map as a composition of simplicial maps $X\to B\pi_1(X)\to B\pi$ such that the first map induces the identity map and the second map induces $f$ on the level of fundamental groups.
\end{lemma}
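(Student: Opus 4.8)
The plan is to produce the desired simplicial map by first building a simplicial map to $B\pi_1(X)$ that realises the identity on fundamental groups, and then composing with a simplicial model of $Bf$. The key point is that $B\pi_1(X)^{(5)}$, as the $5$-skeleton of a $K(\pi_1(X),1)$, receives a canonical map from $X$ inducing the identity on $\pi_1$ (the Postnikov truncation $\iota$), and that $\pi_1(X)$ is finitely presented, so all the spaces involved have finite skeleta and we can work at the level of skeleta throughout. Since $X$ is a $4$-manifold, we only ever need the map up to dimension $4$, so the $5$-skeleton of any $K(\pi,1)$ suffices as a target.

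\textbf{Step 1: reduce to the simplicial approximation theorem.} First I would recall that the bar construction $B\pi_1(X)$ admits a triangulation (its simplices are tuples of group elements) and that there is a continuous map $\iota\colon X \to B\pi_1(X)$ inducing the identity on $\pi_1$; such a map exists because $B\pi_1(X)$ is aspherical and obstruction theory kills all higher obstructions (all of which lie in groups $H^{k+1}(X;\pi_k(B\pi_1(X))) = 0$ for $k\geq 2$). On the finite skeleta that matter, this is a map between finite simplicial complexes, so the simplicial approximation theorem applies: after passing to a sufficiently fine subdivision of $X$ --- and here I would invoke that two barycentric subdivisions always suffice, which is the standard quantitative form of simplicial approximation once one knows the map is defined on a finite complex and one controls the Lebesgue number --- there is a simplicial map homotopic to $\iota$. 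Homotopic maps induce the same map on $\pi_1$, so this simplicial approximation also induces the identity (after the identification $\pi_1(B\pi_1(X)) = \pi_1(X)$).

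\textbf{Step 2: realise $Bf$ simplicially and compose.} For the homomorphism $f\colon \pi_1(X)\to\pi$, the induced map $Bf\colon B\pi_1(X)\to B\pi$ on bar constructions is \emph{already} simplicial: on the $n$-simplex labelled $(g_1,\dots,g_n)$ it is the $n$-simplex labelled $(f(g_1),\dots,f(g_n))$, with the obvious behaviour on faces; one only needs to collapse degenerate simplices where some $f(g_i) = 1$, which is handled by passing once more to a subdivision, or by working with the non-degenerate part directly. Restricting to $5$-skeleta, $Bf$ gives a simplicial map $B\pi_1(X)^{(5)}\to B\pi^{(5)}$ inducing $f$ on $\pi_1$. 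Composing the simplicial approximation from Step~1 with (a subdivided version of) $Bf$ yields a simplicial map $X \to B\pi$, defined with respect to the two-fold barycentric subdivision of $X$ and of $B\pi$, which on $\pi_1$ agrees with $f$; this also gives exactly the claimed factorisation $X\to B\pi_1(X)\to B\pi$ through a map inducing the identity and a map inducing $f$.

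\textbf{Main obstacle.} The genuinely delicate point is the bookkeeping around \emph{which} subdivisions are needed and making sure ``two-fold barycentric subdivision'' is actually enough; the simplicial approximation theorem only guarantees \emph{some} subdivision, and getting the uniform bound of two barycentric subdivisions requires estimating mesh sizes against the Lebesgue number of the open cover of $X$ pulled back from the open stars of $B\pi_1(X)$. I would handle this either by citing the precise quantitative form of simplicial approximation (e.g.\ from a standard reference), or --- if the constant two is specifically needed for later algorithmic steps --- by arguing that one may first pre-subdivide $B\pi_1(X)$ and $B\pi$ to make them fine enough, so that a bounded number of subdivisions of $X$ works; one must also be a little careful that the composition of two simplicial maps defined on (possibly different) subdivisions can be arranged to be simplicial on a common two-fold barycentric subdivision, which is where the ``two-fold'' rather than ``one-fold'' enters. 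Everything else --- asphericity of $B\pi$, the explicit simplicial structure on the bar construction, functoriality of $B(-)$ --- is routine.
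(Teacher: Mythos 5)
Your approach — obstruction theory to get a continuous map $X\to B\pi_1(X)$, simplicial approximation to make it simplicial, then compose with the (already simplicial) $Bf$ — is genuinely different from the paper's, and it has a real gap at the step you yourself flag as the ``main obstacle.'' The claim that two barycentric subdivisions of the source always suffice to simplicially approximate a continuous map between finite complexes is false. The simplicial approximation theorem guarantees that \emph{some} iterated barycentric subdivision works, but the number depends on the map: the mesh of the subdivided source must beat the Lebesgue number of the pullback of the open-star cover of the target, and there is no universal constant like $2$. Your proposed repair (pre-subdivide the target) goes in the wrong direction: refining $B\pi_1(X)$ makes its open stars smaller, so the Lebesgue number shrinks and you need \emph{more} subdivisions of $X$, not fewer. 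So as written this does not establish the ``two-fold barycentric subdivision'' clause of the lemma, which is exactly what the paper needs for its later algorithmic constructions.

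The paper avoids simplicial approximation altogether. It builds an explicit map of $\Delta$-complexes $X\to B\pi$ directly from the combinatorics: send all vertices to the unique $0$-simplex of $B\pi$, collapse a maximal tree $T$ in the $1$-skeleton, send each remaining edge $e$ to the $1$-simplex $|f(g)|$ where $g$ is the class of a loop in $T\cup e$, and extend to higher simplices by the explicit formula $|g_1|\dotsb|g_k|$ read off from the ``spine'' edges. Because this is a $\Delta$-map (not just a continuous map), the ``two-fold'' comes for free from the standard fact that the two-fold barycentric subdivision of a $\Delta$-complex is a simplicial complex and a $\Delta$-map subdivides to a simplicial map on these; no Lebesgue-number estimate is involved. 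This also makes the construction explicitly algorithmic, which is essential for the paper's later use (e.g.\ in \cref{alg:normalData} and \cref{alg:primaryInvariant}); your obstruction-theoretic existence argument would need additional work to extract a computable map. One minor point in your favour: since the paper uses the unreduced semisimplicial bar construction $(B\pi)_k=\pi^k$, the map $Bf$ \emph{is} already a $\Delta$-map even when some $f(g_i)=1$, so the degenerate-simplex worry you raise does not actually arise in this model.

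To fix your proof, replace the simplicial-approximation step by the explicit $\Delta$-complex construction: once you have the combinatorial recipe for the map (which is where the maximal tree is needed — it is the combinatorial substitute for your ``obstruction theory vanishes''), the two-fold subdivision falls out. The factorisation through $B\pi_1(X)$ is then obtained exactly as you suggest, by running the same construction with $\pi=\pi_1(X)$ and $f=\Id$ and observing the two recipes agree on the level of $\pi_1$.
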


\begin{proof}
    We will define a simplicial map between the given triangulation of $X$ and the $\Delta$-complex $B\pi$ (whose two-fold barycentric subdivision is a simplicial complex). Recall that we can construct $B\pi$ as the geometric realisation of the semisimplicial set with $k$-simplices are given by $(B\pi)_k = \pi^k$, where we denote by $|g_1|g_2|\dots|g_k|$ the $k$-simplex corresponding to $(g_1,g_2,\dots,g_k)\in \pi^k$, and where the face maps are
    \begin{align*}
f_0(|g_1|g_2|\dots|g_k|)&=|g_2|\dots|g_k|,\\f_i(|g_1|g_2|\dots|g_k|)&=|g_1|g_2|\dots|g_i\cdot g_{i+1}|\dots|g_k|\quad 1\leq i< k,\\f_{k}(|g_1|g_2|\cdot|g_k|)&=|g_1|g_2|\dots|g_{k-1}|.
    \end{align*}
    Now construct a map of $\Delta$-complexes from $X$ to $B\pi$ as follows. 
    \begin{itemize}
    \item Map the 0-skeleton of $X$ to the unique 0-simplex of $B\pi$.
    \item On the 1-skeleton of $X$, choose a maximal tree $T\subset X_{(1)}$ and send $T$ to the only $0$-simplex in $B\pi$. For any other 1-simplex $e$ of $X$, pick any based loop in $T\cup e$ which goes through $e$ exactly once (in the correct direction). This determines an element $g\in \pi_1(X)$. We send $e$ to the 1-simplex $|f(g)|$ of $B\pi$.
    \item Since $B\pi$ has vanishing higher homotopy groups, there always exists an extension (unique up to homotopy) of the above map $X_{(1)} \rightarrow B\pi$ to the whole $X$. Here is one explicit way to do this: for any $k$-simplex in $X$, if the edges $\{0\rightarrow 1\}, \{1\rightarrow 2\}, \dots, \{k-1 \rightarrow k \}$ are sent to $|g_1|, \dots, |g_k|$, then we send this $k$-simplex to $|g_1|\dots|g_k|$. The compatibility with face maps is easy to check.
    \end{itemize}  
    To turn this map of $\Delta$-complexes into a map of simplicial complexes, we simply carry out barycentric subdivision twice on both $X$ and $B\pi$, recalling that a $\Delta$-complex subdivided twice yields a simplicial complex.

    To achieve the factorisation at the end of the lemma, apply the previous construction in the special case $\pi=\pi_1(X)$ and $f=\Id$.  Then the above procedure also gives a method for producing a simplicial map from $B\pi_1(X)\to B\pi$ which induces the map $f$ on fundamental groups (in fact it is easier since one does not need to consider a maximal tree).  Since these two constructions give the same map on the level of fundamental groups, this gives a factorisation of the previous simplicial map.
\end{proof}

\subsection{Group theoretic considerations}

We will now show how to remove the need to add in a certificate that the fundamental groups of our given manifolds are isomorphic to the input in \cref{thm:mainAlmostSPinOrSpin} and \cref{thm:mainNon-Spin}, in the case that the fundamental groups are finite or are abstractly isomorphic.  In particular, we will show that one can produce such a certificate in these cases.  

Word problems for general groups are undecidable, but for finitely presented finite groups this is not the case.

\begin{definition}
    Let $\mathscr{S}$ be a 
    class of groups. We say that the \emph{word problem} is solvable for $\mathscr{S}$ if for every presentation of a group in $\mathscr{S}$ there exists an algorithm which determines for every word in terms of generators or their inverses 
    whether it is the identity in the given group.
\end{definition}

The word problem is in general undecidable, but there are decidability results for some classes of groups.
\begin{theorem}[\cite{McKinsey}]\label{thm:WordProblemFinitePi}
The word problem for finitely presented residually finite groups is solvable.\qed
\end{theorem}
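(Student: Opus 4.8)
The plan is to recall the classical proof that finitely presented residually finite groups have solvable word problem, due essentially to McKinsey, and to spell out the two semi-decision procedures that together decide the word problem. So let $\pi$ be a group given by a finite presentation $\langle x_1,\dots,x_n \mid r_1,\dots,r_m\rangle$, and suppose $\pi$ is residually finite. Given a word $w$ in the generators and their inverses, we must decide whether $w$ represents the identity element of $\pi$.

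First I would describe the procedure that confirms $w =_\pi 1$ when this is the case. One enumerates all finite sequences of elementary Tietze-type manipulations, or equivalently all formal consequences of the relators: systematically list all words of the form $\prod_{k} u_k r_{i_k}^{\pm 1} u_k^{-1}$ (products of conjugates of relators and their inverses) and apply free reduction. Since $w =_\pi 1$ if and only if $w$ is freely equal to such a product, this enumeration halts with a ``yes'' precisely when $w$ represents the identity. This is the standard recursive-enumerability of the word problem and uses only that $\pi$ is finitely presented, not residual finiteness.

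Next I would describe the procedure that confirms $w \neq_\pi 1$ when this is the case, and here residual finiteness enters. One enumerates all finite groups $F$ (up to isomorphism, via their multiplication tables) and all maps $\varphi$ from the generating set $\{x_1,\dots,x_n\}$ to $F$; for each such pair one checks, by finite computation in $F$, whether $\varphi$ sends every relator $r_j$ to the identity of $F$ (so that $\varphi$ extends to a homomorphism $\pi\to F$) and whether $\varphi(w)\neq 1_F$. By the definition of residual finiteness, if $w\neq_\pi 1$ then there is a homomorphism from $\pi$ to some finite group not killing $w$, so this enumeration halts with a ``yes'' precisely when $w$ does not represent the identity. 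Running both procedures in parallel (dovetailing), exactly one of them halts, and its output decides whether $w =_\pi 1$. This proves the theorem.

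The main obstacle, or rather the only subtle point, is the termination argument for the second procedure: one must invoke the hypothesis of residual finiteness to guarantee that a separating finite quotient exists, and one must be slightly careful that the enumeration of finite groups and homomorphisms is genuinely effective — but this is routine, since finite groups can be enumerated by listing all multiplication tables on the sets $\{1,\dots,N\}$ for $N = 1, 2, \dots$ and checking the group axioms, and for each the finitely many candidate maps on generators can be checked mechanically. There is no real difficulty beyond assembling these two classical semi-algorithms; the content of the theorem is entirely in the observation that residual finiteness makes the ``$\neq$'' side semi-decidable.
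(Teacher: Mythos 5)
Your proof is the classical and correct argument (McKinsey's two dovetailed semi-decision procedures: enumerating consequences of the relators to certify $w=_\pi 1$, and enumerating homomorphisms to finite groups to certify $w\neq_\pi 1$). The paper does not prove this theorem at all — it simply cites \cite{McKinsey} as a known result — so there is nothing to compare against; your write-up is a sound and standard proof of the cited fact.
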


Finite groups are a special case of residually finite groups. The corollary that we need is the following. 

\begin{corollary}\label{cor:CompareGroups}
    There exists an algorithm that 
    \begin{itemize}
        \item takes as input two finite presentations $\mathcal{P}_1,\mathcal{P}_2$ of finite groups $G_1,G_2$, and 
        \item outputs an isomorphism from $G_1$ to $G_2$ or certifies that there exists no such isomorphism.
    \end{itemize} 
\end{corollary}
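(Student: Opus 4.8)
The plan is to leverage the solvability of the word problem for finite groups (Theorem~\ref{thm:WordProblemFinitePi}) to search exhaustively for an isomorphism. First I would observe that from a finite presentation $\mathcal{P}_i$ of a finite group $G_i$ one can algorithmically enumerate the elements of $G_i$: starting from the generators, repeatedly multiply and use the word problem to decide when two words represent the same element, discarding duplicates. Since $G_i$ is finite, this procedure terminates and produces a complete list of elements together with a multiplication table $T_i$. (Concretely, one enumerates all words in the generators in order of length; by the word problem one can decide equality, so one obtains representatives for all distinct group elements, and one knows the list is complete once no new element appears among all products of the current elements by generators.)

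Next, with both multiplication tables $T_1$ and $T_2$ in hand, I would first compare $|G_1|$ and $|G_2|$; if they differ, output that no isomorphism exists. If $|G_1| = |G_2| = n$, then an isomorphism $G_1 \to G_2$, if it exists, is in particular a bijection of the underlying $n$-element sets, and there are only $n!$ such bijections. The algorithm enumerates all of them, and for each candidate bijection $\phi$ checks, using the multiplication tables, whether $\phi(gh) = \phi(g)\phi(h)$ for all $g,h \in G_1$; this is a finite check. If some $\phi$ passes, output it; if none does, output that no isomorphism exists. Correctness is immediate: a bijection is a group isomorphism precisely when it respects the multiplication tables, so the search is exhaustive and sound.

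This is essentially the whole argument, and there is no serious obstacle: the only nontrivial ingredient is that the word problem for finite groups is solvable, which we have already recorded as Theorem~\ref{thm:WordProblemFinitePi}, and this is exactly what is needed to turn a presentation into an explicit multiplication table. (One should note that a priori we are not told the order of $G_i$, so the enumeration must be phrased so that it detects its own termination; this is handled by the observation above that the element list is complete once it is closed under multiplication by the generators, which is detectable using the word problem.) The main point worth spelling out in the proof is therefore just the reduction from a presentation to a multiplication table; everything after that is a brute-force search over $n!$ bijections.
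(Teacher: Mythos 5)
Your proposal is correct and follows essentially the same route as the paper's proof: use the solvability of the word problem for finite groups to algorithmically build multiplication tables from the presentations, then brute-force search over all bijections between the underlying sets to find an isomorphism or certify that none exists.
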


\begin{proof}
For each presentation we generate a multiplication table for the corresponding group. We begin creating the table by adding all generators as rows/columns for the table. Then iterate the following procedure. For each product of two elements in the table, check if it is already in the table by using the algorithm in \cref{thm:WordProblemFinitePi} which solves the word problem.  If it is not, then this determines an entry and a new row/column.  If it is, then this determines a new entry. Continue until there are no more products which do not occur in the table, which will happen in finite time since we knew the groups were both finite.  Since we started by adding all of the generators, this table gives a multiplication table for the group corresponding to the presentation.

For the presentations $\mathcal{P}_1$ and $\mathcal{P}_2$ we compare the resulting tables by trying to find an isomorphism between them. This is done by iterating all bijections and checking if the given bijection gives an isomorphism.
\end{proof}

\begin{remark}
    One could avoid the use of \cref{thm:WordProblemFinitePi} in the proof of \cref{cor:CompareGroups} by using the fact that finite groups are classified and hence have an enumeration.  Since the classification of finite groups is a much deeper result, we instead use the more classical fact that the word problem is solvable.
\end{remark}

For handling the infinite cyclic case, we have a similar corollary.

\begin{corollary}\label{cor:CompareGroups_infcyclic}
There exists an algorithm that 
    \begin{itemize}
        \item takes as input two finite presentations $\mathcal{P}_1,\mathcal{P}_2$ which are abstractly known to be isomorphic, and 
        \item outputs an isomorphism between these two groups.
    \end{itemize} 
\end{corollary}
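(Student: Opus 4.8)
The plan is to give an ``impractical but correct'' algorithm in the spirit of \cref{rem:equivalence_Kirby_triangulation}, exploiting that we are \emph{promised} an isomorphism exists: we therefore only need to \emph{search} for one together with a certificate that it works. Write $\mathcal{P}_1=\langle x_1,\dots,x_n\mid r_1,\dots,r_p\rangle$ and $\mathcal{P}_2=\langle y_1,\dots,y_m\mid s_1,\dots,s_q\rangle$, let $G_1,G_2$ be the groups they present, and let $F_1,F_2$ be the free groups on the generators. The key observation is that, although the word problems of $G_1$ and $G_2$ may be undecidable, their positive instances are uniformly recursively enumerable: a word $w\in F_2$ is trivial in $G_2$ if and only if there is a finite expression $w=\prod_{k=1}^{N}u_k\,s_{j_k}^{\varepsilon_k}\,u_k^{-1}$ in $F_2$ with $u_k\in F_2$, $\varepsilon_k\in\{\pm1\}$ and $1\le j_k\le q$; verifying such a ``triviality certificate'' only involves comparing freely reduced words in $F_2$, and is thus decidable. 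The same holds for $G_1$.

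First I would dovetail over all tuples of data consisting of words $w_1,\dots,w_n\in F_2$ (candidate images of the $x_i$ under a homomorphism $\phi$), words $v_1,\dots,v_m\in F_1$ (candidate images of the $y_j$ under a homomorphism $\psi$), and triviality certificates for each of the following finitely many words: $r_\ell(w_1,\dots,w_n)\in F_2$ for $1\le\ell\le p$ (ensuring $\phi$ descends to a homomorphism $G_1\to G_2$); $s_\ell(v_1,\dots,v_m)\in F_1$ for $1\le\ell\le q$ (ensuring $\psi$ descends to a homomorphism $G_2\to G_1$); $\psi(\phi(x_i))\,x_i^{-1}\in F_1$ for $1\le i\le n$ (ensuring $\psi\circ\phi=\id_{G_1}$); and $\phi(\psi(y_j))\,y_j^{-1}\in F_2$ for $1\le j\le m$ (ensuring $\phi\circ\psi=\id_{G_2}$). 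As soon as a valid such tuple is found, the associated $\phi$ is an isomorphism $G_1\to G_2$ with two-sided inverse $\psi$, and we output it.

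Termination is where the hypothesis is used: by assumption there is an abstract isomorphism $\Phi\colon G_1\to G_2$, and picking arbitrary words in $F_2$ (resp.\ $F_1$) representing $\Phi(x_i)$ (resp.\ $\Phi^{-1}(y_j)$) makes all the words listed above genuinely trivial in the appropriate group, so each admits a finite triviality certificate; the dovetailing enumeration therefore succeeds after finitely many steps. The only real obstacle is the one already isolated above --- one cannot simply ``check that $\phi$ is a homomorphism and an isomorphism'', since this would require solving the (undecidable) word problem --- and it is circumvented by searching over explicit certificates rather than deciding. In the case actually needed here, where $\mathcal{P}_1$ and $\mathcal{P}_2$ both present $\Z$, there is also a direct route: compute $H_1$ of each presentation via Smith normal form, identify each with $\Z$, and note that since $\Z$ is abelian the abelianisation maps $G_i\to\Z$ are already isomorphisms described explicitly on generators, so their composite $G_1\to\Z\to G_2$ is the desired isomorphism.
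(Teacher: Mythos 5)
Your proposal is correct and takes essentially the same approach as the paper: both exploit that the positive word problem is recursively enumerable (you make this explicit via triviality certificates, the paper via a semi-decision procedure that halts on trivial words), dovetail over candidate pairs of mutually inverse homomorphisms, and use the promise of an abstract isomorphism to guarantee termination. The direct $H_1$/Smith-normal-form shortcut you add for the $\Z$ case is a nice bonus the paper does not mention, but it is not needed for the argument.
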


\begin{proof}
    Assume $\mathcal{P}_1$ and $\mathcal{P}_2$ have $n_1$ and $n_2$ generators, respectively.  Enumerate all homomorphisms between the free groups $F_{n_1}$ and $F_{n_2}$.  For each homomorphism, we can determine if this descends to a homomorphism $G_{\mathcal{P}_1}\to G_{\mathcal{P}_2}$, by checking to see if each relation is sent to the trivial element. Such an algorithm terminates in finite time if the answer is positive, and runs indefinitely if the answer is negative.  By running this algorithm diagonally, this allows us to enumerate all homomorphisms $G_{\mathcal{P}_1}\to G_{\mathcal{P}_2}$.  Similarly enumerate all homomorphisms $G_{\mathcal{P}_1}\to G_{\mathcal{P}_2}$.  Now check whether each possible composition of $f$ and $g$ where $f\colon G_{\mathcal{P}_1}\to G_{\mathcal{P}_2}$ and $g\colon G_{\mathcal{P}_1}\to G_{\mathcal{P}_2}$ is the identity map (again in a diagonal manner).  Eventually this process will end, since we abstractly know that the groups are isomorphic.
\end{proof}

Applying these corollaries to a presentation of the fundamental group of a triangulation which we abstractly know is finite or infinitely cyclic produces for us the data for the isomorphism, as defined in \cref{def:proofPi1Isomorphic}.

\begin{corollary}
    There exists an algorithm that
    \begin{itemize}
        \item takes as input a triangulated manifold $X$ whose fundamental group $\pi_1(X)$ is isomorphic to a finite group or $\Z$, and
        \item outputs data for such an isomorphism.
    \end{itemize} 
\end{corollary}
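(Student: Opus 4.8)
The plan is to reduce everything to the group-theoretic algorithms of \cref{cor:CompareGroups,cor:CompareGroups_infcyclic}, while carefully bookkeeping how abstract group elements correspond to edge-loops in the triangulation, so that the concrete data required by \cref{def:proofPi1Isomorphic} can be read off.

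First I would extract from the given triangulation $\mathcal{K}$ of $X$ a finite presentation $\mathcal{P}$ of $\pi_1(X)$ by the standard procedure: fix a basepoint vertex, choose a maximal tree $T$ in the $1$-skeleton $X^{(1)}$, take one generator $x_e$ for each edge $e \notin T$, and one relator for each $2$-simplex. Simultaneously I record, for each generator $x_e$, an explicit based edge-loop $\gamma_e$ representing it: the tree path from the basepoint to an endpoint of $e$, followed by $e$, followed by the tree path back. Then any word $w$ in the $x_e^{\pm 1}$ can be turned algorithmically into a based edge-loop $\gamma_w$ by concatenating the corresponding $\gamma_e$ and their reverses, and $\gamma_w$ represents the class of $w$ in $\pi_1(X)$.

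Next I would decide which of the two cases occurs. Since $H_1(X;\Z) \cong \pi_1(X)^{\mathrm{ab}}$ is computable from $\mathcal{K}$ by Smith normal form of the simplicial boundary matrices, and since $\pi_1(X) \cong \Z$ forces $H_1(X;\Z) \cong \Z$ whereas $\pi_1(X)$ finite forces $H_1(X;\Z)$ finite, the free rank of $H_1(X;\Z)$ determines the case. (Alternatively one may simply dovetail the two case-specific procedures below; under the standing hypothesis exactly one of them halts.) In the infinite cyclic case, apply \cref{cor:CompareGroups_infcyclic} to the one-generator, no-relator presentation $\langle a \rangle$ of $\Z$ together with $\mathcal{P}$ --- abstractly isomorphic by hypothesis --- obtaining an isomorphism $\varphi \colon \Z \to \pi_1(X)$, concretely a word $w$ in the generators of $\mathcal{P}$ equal to $\varphi(1)$; output the loop $\gamma := \gamma_w$, which is precisely the datum of \cref{def:proofPi1Isomorphic}(2). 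In the finite case, build a multiplication table $T_\pi$ for $\pi := \pi_1(X)$ from $\mathcal{P}$ exactly as in the proof of \cref{cor:CompareGroups}, using solvability of the word problem (\cref{thm:WordProblemFinitePi}) to decide equality of words; this halts because $\pi_1(X)$ is finite, and along the way it attaches to each group element $g$ a definite word $w_g$ over the generators of $\mathcal{P}$ representing it. Take $\varphi = \mathrm{id}_\pi$ and output $T_\pi$ together with the assignment $g \mapsto \gamma_{w_g}$, which is the datum of \cref{def:proofPi1Isomorphic}(1).

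I do not expect a serious obstacle here. The only genuine content is the bookkeeping that keeps a chosen word representative attached to every group element, so that the outputs of \cref{cor:CompareGroups,cor:CompareGroups_infcyclic} can be pushed back to honest edge-loops in $X^{(1)}$; extracting the presentation, computing $H_1$, and translating words to loops are all routine and manifestly algorithmic.
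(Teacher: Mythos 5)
Your proposal is correct, and it reaches the same conclusion by a somewhat more direct route than the paper. The paper's proof enumerates a list of presentations $\{\mathcal{P}_i\}$ covering $\Z$ and every finite group, and then runs the comparison algorithms of Corollaries~\ref{cor:CompareGroups} and~\ref{cor:CompareGroups_infcyclic} diagonally against the extracted presentation $\mathcal{P}$ of $\pi_1(X)$ until one of them halts with an isomorphism; the translation of the resulting isomorphism into edge-loops is left implicit. You instead first determine which of the two cases holds --- either by computing the free rank of $H_1(X;\Z)\cong\pi_1(X)^{\mathrm{ab}}$ via Smith normal form, or by dovetailing just the two case-specific procedures --- and then, in the finite case, build the multiplication table directly from $\mathcal{P}$ (this is exactly the table-building step inside the proof of Corollary~\ref{cor:CompareGroups}, applied to $\mathcal{P}$ itself), taking $\varphi=\mathrm{id}$. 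This sidesteps the need to enumerate presentations of all finite groups and the accompanying many-way dovetailing, at the modest cost of making the word-to-loop bookkeeping explicit, which the paper also needs. One small point in your favour: you also record the multiplication table $T_\pi$ as part of the output, which is not literally demanded by Definition~\ref{def:proofPi1Isomorphic} but is needed downstream (e.g.\ in Lemma~\ref{alg:normalData}), so it is a sensible addition. Both arguments are valid; yours trades the paper's uniform ``enumerate and test'' scheme for a case split plus a direct computation of the group.
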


\begin{proof}
    First, we construct a presentation $\mathcal{P}$ of $\pi_1(X)$ in which every generator corresponds to a loop in the $1$-skeleton, given by a word of edges. Next, we enumerate a list of presentations $\{\mathcal{P}_i\}$, containing one presentation for $\Z$ and each finite group. Now, we apply Corollaries~\ref{cor:CompareGroups} and~\ref{cor:CompareGroups_infcyclic} diagonally to compare our presentation $\mathcal{P}$ to each $\mathcal{P}_i$, until we find an isomorphism from $\Z$ or a finite group to $\mathcal{P}$. 
\end{proof}

\subsection{Algorithms for combinatorial modified surgery}

We will show the existence of the following algorithms.

\begin{lemma}\label{alg:interForm}
There exists an algorithm that 
 \begin{itemize}
        \item takes as an input an oriented triangulation $\mathcal{K}$ of a closed, oriented, smooth $4$-manifold $X$, and a boolean variable that determines a ring $R$ that is either $\Z/2$ or $\Z$, and 
        \item outputs a matrix representing the $\Z/2$-intersection form of $X$
        \[\lambda^{\Z/2}_X\colon H_2(X;\Z/2)\times H_2(X;\Z/2) \to \Z/2\]
        if $R=\Z/2$ or outputs a matrix representing the $\Z$-intersection form of $X$
        \[\lambda^{\Z}_X\colon (H_2(X)/\Tors) \times (H_2(X)/\Tors) \to \Z\]
        if $R=\Z$.
\end{itemize}
\end{lemma}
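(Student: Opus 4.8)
\textbf{Proof proposal for \cref{alg:interForm}.} The plan is to compute the intersection form from the simplicial chain complex of a sufficiently subdivided triangulation, using simplicial Poincaré duality. First I would pass to a triangulation $\mathcal{K}'$ on which the dual cell decomposition makes sense, namely the barycentric subdivision of $\mathcal{K}$; the barycentric subdivision is algorithmically computable from $\mathcal{K}$ (it has one vertex per simplex of $\mathcal{K}$ and simplices given by flags), and it carries the dual block decomposition whose blocks are also computable. From the oriented triangulation one reads off the simplicial boundary matrices $\partial_k$ over $\Z$ (the orientation data on the $4$-simplices, propagated consistently across $3$-simplices, fixes all the incidence signs), and reduces them mod $2$ when $R=\Z/2$. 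Then $H_2(X;R)$ is computed as $\ker\partial_2/\operatorname{im}\partial_3$ by Smith normal form (over $\Z$) or Gaussian elimination (over $\Z/2$); this also yields an explicit basis of cycles representing generators, and over $\Z$ one discards the torsion by taking the free part of the Smith normal form.

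The core step is then to evaluate $\lambda_X^R$ on pairs of these basis cycles. Over $\Z/2$ the cleanest route is to use the cup product on simplicial cochains: Poincaré duality identifies $H_2(X;\Z/2)\cong H^2(X;\Z/2)$, the Alexander–Whitney formula gives an explicit chain-level cup product $H^2\times H^2\to H^4(X;\Z/2)\cong\Z/2$, and $\lambda_X^{\Z/2}(a,b)=\langle \operatorname{PD}^{-1}(a)\cup\operatorname{PD}^{-1}(b),[X]\rangle$ is then a finite computation once one has fixed the fundamental cycle $[X]$ (the sum of all oriented $4$-simplices, well-defined mod $2$ and over $\Z$ by the orientation hypothesis). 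Inverting $\operatorname{PD}$ is itself linear algebra: $\operatorname{PD}$ sends a cochain to its cap product with $[X]$, which on the chain/cochain level is an explicit matrix that one inverts over $R$ (it is an isomorphism by simplicial Poincaré duality, which holds for any triangulated closed oriented manifold). Over $\Z$ one does the analogous computation with the free quotient: represent generators of $H_2(X)/\Tors$ by $2$-cycles, lift them through $\operatorname{PD}^{-1}$ to relative $2$-cochains (equivalently, work with the dual cell decomposition and intersect a $2$-cycle in $\mathcal{K}'$ transversally with a $2$-cycle in the dual complex, counting signed intersection points), and assemble the resulting integers into the matrix. Either way every step — subdivision, boundary matrices, Smith normal form, Alexander–Whitney cup products, cap products with the fundamental cycle, matrix inversion — is a standard finite computation on finite data.

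The main obstacle, and the place requiring care rather than cleverness, is bookkeeping the signs and the duality isomorphism correctly over $\Z$: one must produce an \emph{honest} chain-level model of Poincaré duality for a triangulated manifold (e.g.\ via the dual block complex of the barycentric subdivision, or via a chain-level cap product with the simplicial fundamental cycle that is verified to induce an isomorphism on homology), and confirm that the transverse-intersection count in the dual decomposition agrees with $\langle\operatorname{PD}^{-1}(\varphi)\cup\operatorname{PD}^{-1}(\psi),[X]\rangle$ up to the expected sign. Once that identification is pinned down — and it is classical, see for instance the treatment of simplicial Poincaré duality and dual cells in standard references — the algorithm itself is immediate. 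A secondary, purely technical point is to make explicit how the orientation of $\mathcal{K}$ (equivalence classes of vertex orderings on $4$-simplices, compatible on $3$-simplices) determines a cycle $[X]$ in the simplicial $\Z$-chain complex; this is routine but should be stated so that the signs entering the cup/cap computations are unambiguous.
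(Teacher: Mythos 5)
Your proposal is correct and follows essentially the same route as the paper: compute bases for $H_2$ and $H^2$ (mod torsion) by Smith normal form, realise Poincar\'e duality as cap product with the fundamental cycle $\sum [\sigma^4]$, invert that matrix, apply the simplicial (Alexander--Whitney) cup product, and evaluate against the fundamental class. The one detour you take that the paper does not need is the passage to the barycentric subdivision and the dual block complex: since Poincar\'e duality is implemented directly as cap product with the simplicial fundamental cycle of $\mathcal{K}$ itself, there is no need to build dual cells or count transverse intersections, and the same cochain-level computation works verbatim over $\Z$ and over $\Z/2$. Your caution about pinning down the chain-level model of duality and the orientation-induced signs is reasonable, but these are the standard facts of simplicial Poincar\'e duality and the paper simply invokes them.
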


\begin{proof}
In the following we discuss the case $R=\Z$. The case $R=\Z/2$ is dealt with in almost the same way.  First, from $\mathcal{K}$ we construct the simplicial chain and cochain groups along with boundary and coboundary operators.  Using this data, we then compute the homology and cohomology groups.  Note that cup and cap products are defined on, and can be computed on, the (co)chain level via standard formulae, and hence we can use these operations in our algorithm.  Using elementary linear algebra
    we determine a basis for $H_2(\mathcal{K};\Z)/\Tors$
    and for $H^2(\mathcal{K};\Z)/\Tors$.  The fundamental class of $X$ is given by the sum of all oriented 4-dimensional simplices of $\mathcal{K}$, and capping with it induces the Poincar\'{e} duality isomorphism $H^2(\mathcal{K};\Z)\to H_2(\mathcal{K};\Z)$
    which descends to an isomorphism $H^2(\mathcal{K};\Z)/\Tors\to H_2(\mathcal{K};\Z)/\Tors$.  The inverse to this isomorphism can be computed, e.g.\ by computing the isomorphism with respect to the above bases, and then computing the inverse matrix. Use this Poincar\'{e} duality inverse map and the standard cup product formula (which is known to descend to cohomology) to obtain a map 
    $$H_2(\mathcal{K};\Z)/\Tors\times H_2(\mathcal{K};\Z)/\Tors\to H^4(\mathcal{K};\Z).$$  
    By evaluating on the fundamental class we obtain the intersection form
        \[\lambda_X^{\Z} \colon H_2(X;\Z)/\Tors\times H_2(X;\Z)/\Tors\to \Z.\]
    Since we already chose a basis for $H_2(X;\Z)/\Tors$  we obtain a matrix representing the intersection form.
\end{proof}

\begin{lemma}\label{alg:signature}
    There exists an algorithm that 
    \begin{itemize}
        \item takes as input an oriented triangulation $\mathcal{K}$ of a closed, oriented, smooth $4$-manifold $X$, and
        \item outputs the signature $\sigma(X)$ of the $\Z$-intersection form $\lambda_X\colon H_2(X;\Z)\times H_2(X;\Z)\to \Z$.
    \end{itemize}
\end{lemma}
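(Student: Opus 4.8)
The plan is to reduce the computation of $\sigma(X)$ to the algorithm from \cref{alg:interForm}, applied with $R = \Z$, followed by the signature algorithm for integral symmetric matrices already established in \cref{cor:alg-for-signature}.

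First I would invoke \cref{alg:interForm} with the boolean set to select $R = \Z$. This produces a matrix $V$ representing the $\Z$-intersection form $\lambda_X \colon (H_2(X)/\Tors) \times (H_2(X)/\Tors) \to \Z$ with respect to some chosen basis of $H_2(X)/\Tors$. Note that the signature of the bilinear form $\lambda_X \colon H_2(X;\Z) \times H_2(X;\Z) \to \Z$ in the statement agrees with the signature of $\lambda_X$ restricted to $H_2(X)/\Tors$, since the torsion subgroup lies in the radical of the form over $\Q$ and hence does not contribute to the signature. Thus it suffices to compute the signature of $V$.

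Next I would feed $V$ into the algorithm from \cref{cor:alg-for-signature}, which takes an integral symmetric matrix and outputs its signature (via \cref{lem:diagonal_form}, which diagonalises $V$ over $\Q$ by simultaneous row and column operations, after which the signature is read off as the number of positive diagonal entries minus the number of negative ones). Strictly speaking, \cref{cor:alg-for-signature} is stated for \emph{unimodular} $V$, but the matrix $V$ produced here is automatically unimodular: $X$ is closed and oriented, so Poincaré duality implies that $\lambda_X$ on $H_2(X)/\Tors$ is unimodular, hence $\det V = \pm 1$. (Alternatively, one observes that the diagonalisation procedure in the proof of \cref{lem:diagonal_form} never used unimodularity except to guarantee that a nonzero off-diagonal entry can be found when $v_{11} = 0$; for a nondegenerate form this is still true, and nondegeneracy of $\lambda_X$ over $\Q$ again follows from Poincaré duality.) Either way, the output is $\sigma(X)$.

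There is essentially no obstacle here: the lemma is a direct corollary of \cref{alg:interForm} and \cref{cor:alg-for-signature}. The only point requiring a sentence of care is the identification of the signature of the form on $H_2(X;\Z)$ with that of the form on $H_2(X)/\Tors$, which is immediate since torsion elements pair trivially with everything after tensoring with $\Q$. So the proof is a short paragraph stringing together these two previously-established algorithms.
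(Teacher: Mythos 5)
Your proof is correct and follows exactly the paper's argument: apply \cref{alg:interForm} to extract a matrix for the intersection form and then feed it into \cref{cor:alg-for-signature}. The extra remarks you include about torsion not affecting the signature and about $V$ being unimodular are accurate (and implicit in the paper's one-line proof).
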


\begin{proof}
    We obtain a matrix representing the intersection form of $X$ via \cref{alg:interForm}. Then we can use \cref{cor:alg-for-signature} to compute the signature.
\end{proof}

\begin{remark}
    An alternative way to compute the signature is to utilise the combinatorial formula given by Ranicki--Sullivan~\cite{RanSulSignature}.  As in the proof of \cref{alg:interForm} we can construct simplicial chain and cochain groups along with boundary and coboundary operators
    \begin{align*}
        \partial &\colon C_{i}\to C_{i-1} \\
        \delta &\colon C_{i} \to C_{i+1}. 
    \end{align*}
    where we have identified the cochain groups with the chain groups using the choice of orientation of the simplices.  Ranicki--Sullivan then define another combinatorial operator $\varphi\colon C_i\to C_{4-i}$ such that we can form a matrix operator
    \[
    \begin{pmatrix}
        \varphi & \delta \\
        \partial & 0
    \end{pmatrix}\colon C_{2}\oplus C_{3} \to C_{2}\oplus C_{3}
    \]
    and they prove that the signature of this matrix gives the signature of the manifold.  Compute the signature of this matrix via a diagonalisation algorithm. Note that this can be done entirely working over the integers, as explained in \cref{cor:alg-for-signature}. 
\end{remark}    

\begin{lemma}\label{alg:normalData}
    Let $\pi$ be a finite group with its multiplication table $T_{\pi}$.   
    There exists an algorithm that
    \begin{itemize}
        \item takes as an input an oriented triangulation $\mathcal{K}$ of a  closed, oriented, smooth $4$-manifold $X$ along with data for some isomorphism $\pi_1(X)\cong \pi$ $($see \cref{def:proofPi1Isomorphic}$)$, and
        \item outputs the 1-type data for $X$ along with the reduced 1-smoothing data.
    \end{itemize}
\end{lemma}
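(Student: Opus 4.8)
The goal is to algorithmically extract, from a triangulation $\mathcal{K}$ of $X$ together with data for an isomorphism $\pi_1(X)\cong\pi$, first the $1$-type datum $(T_\pi,(B\pi)^{(5)},w_2^\pi)$ and then the reduced $1$-smoothing data. We proceed in three stages: (i) build a simplicial model $(B\pi)^{(5)}$ and determine the class $w_2^\pi$; (ii) decide which of the three Teichner types $X$ belongs to; (iii) enumerate a finite adequate set $A$ of simplicial maps $B\pi_1(X)^{(5)}\to B\pi^{(5)}$ realising all the required isomorphisms on $\pi_1$.

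\textbf{Step 1: constructing $(B\pi)^{(5)}$ and computing $w_2(X)$, $w_2(\widetilde X)$.} Since $\pi$ is finite and given by its multiplication table $T_\pi$, we can write down the $\Delta$-complex bar construction $B\pi$ explicitly through dimension $5$ (the $k$-simplices are $\pi^k$ with the face maps recalled in the proof of \cref{lem:simplicialMap}); taking two barycentric subdivisions yields a genuine simplicial complex whose $5$-skeleton we record. Its cohomology $H^2(B\pi^{(5)};\Z/2)$ is computable by elementary linear algebra over $\Z/2$ from the simplicial cochain complex. To determine $w_2$ of $X$ and of $\widetilde X$, first observe $X$ is oriented and smooth, so $w_1(X)=0$ and $w_2(TX)=w_2(\nu_X)$; the class $w_2(X)\in H^2(X;\Z/2)$ is Poincar\'e dual to a class computed combinatorially from $\mathcal{K}$ (for instance via Wu's formula: $w_2(X)$ is the unique class with $w_2(X)\cup x = x\cup x$ for all $x\in H^2(X;\Z/2)$, and the $\Z/2$-intersection form and cup products are computable by \cref{alg:interForm} and the simplicial cup product formula). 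Next one tests whether $w_2(X)=0$: this is a solvable linear-algebra question. If $w_2(X)=0$ we are in Type II and set $w_2^\pi=0$. If $w_2(X)\neq 0$, we must decide whether $w_2(\widetilde X)=0$, i.e.\ whether $w_2(X)$ is pulled back from $B\pi$. Using the data for the isomorphism $\pi_1(X)\cong\pi$, \cref{lem:simplicialMap} produces a simplicial $\pi_1$-isomorphism $c\colon X\to B\pi^{(5)}$ (after subdivision); then $w_2(\widetilde X)=0$ if and only if $w_2(X)$ lies in the image of $c^*\colon H^2(B\pi^{(5)};\Z/2)\to H^2(X;\Z/2)$, since any two such classifying maps differ by a map factoring through $B\pi$ up to homotopy and the relevant cohomology is $2$-dimensional information determined by $\pi$. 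This image-membership question is again linear algebra over $\Z/2$. If $w_2(X)$ is in the image we set $w_2^\pi$ to be a preimage (Type III); otherwise $w_2^\pi=\infty$ (Type I). In all cases the defining conditions of a $1$-type datum are satisfied by construction.

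\textbf{Step 2: the reduced $1$-smoothing data.} We must output a finite subset $A$ of simplicial maps $B\pi_1(X)^{(5)}\to B\pi^{(5)}$ satisfying the two listed conditions. When $w_2^\pi=\infty$ (Type I) the constraint $w_2(X)=(c\circ\iota)^*(w_2^\pi)$ is vacuous, and $A$ should contain, for each isomorphism $f\colon\pi_1(X)\to\pi$, a simplicial map inducing $f$; when $w_2^\pi\neq\infty$ we additionally require compatibility with $w_2^\pi$. The key point is that there are only finitely many isomorphisms $\pi_1(X)\to\pi$ (since $\pi$ is finite, $\pi_1(X)$ is finite, and we can enumerate all bijections of underlying sets using $T_\pi$ and a multiplication table for $\pi_1(X)$ produced via \cref{cor:CompareGroups}; testing which bijections are isomorphisms is immediate). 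For each such isomorphism $f$ we test the condition $\iota^*(Bf)^*(w_2^\pi)=w_2(X)$ — a finite computation in $H^2(X;\Z/2)$ using the simplicial model — and for those $f$ that pass, we invoke the factorisation statement of \cref{lem:simplicialMap} to produce an explicit simplicial map $c_f\colon B\pi_1(X)^{(5)}\to B\pi^{(5)}$ with $(c_f)_*=f$ and with $c_f\circ\iota$ simplicial on the two-fold barycentric subdivision of $\mathcal{K}$. We collect these finitely many $c_f$ into $A$; condition (1) holds by \cref{lem:simplicialMap} and condition (2) holds because we have ranged over all isomorphisms $f$ satisfying the $w_2$-compatibility.

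\textbf{Main obstacle.} The routine parts are the linear algebra over $\Z/2$ and $\Z$ and the enumeration of bijections; the genuine subtlety is Step~1's decision of whether $w_2(\widetilde X)=0$, i.e.\ correctly identifying the Teichner type. The issue is that $w_2(\widetilde X)=0$ is a statement about the universal cover, which is typically non-compact and not directly triangulated from $\mathcal{K}$; one must reduce it to a computation on $X$ and $B\pi$. The clean reformulation is that $w_2(\widetilde X)=0$ exactly when $w_2(X)$ is pulled back along \emph{some} $2$-connected (equivalently $\pi_1$-iso) map $X\to B\pi$, and since all such maps agree up to homotopy after composing with a self-map of $B\pi$, it suffices to check membership of $w_2(X)$ in the image of a single $c^*$ obtained from \cref{lem:simplicialMap}; I expect verifying this equivalence carefully — in particular that the relevant pullback does not depend on which $\pi_1$-isomorphism one chooses, and that the $5$-skeleton of $B\pi$ carries enough information — to be where the real care is needed. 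Everything else then assembles from the algorithms already established (\cref{alg:interForm}, \cref{lem:simplicialMap}, \cref{cor:CompareGroups}).
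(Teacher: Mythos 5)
Your proposal is correct, and much of it (constructing $(B\pi)^{(5)}$ by the bar construction, computing $w_2$ via Wu's formula and \cref{alg:interForm}, enumerating $\Isom(\pi_1(X),\pi)$ via $\Aut(\pi)$, and invoking \cref{lem:simplicialMap} to realise each isomorphism simplicially) matches the paper's argument. Where you genuinely diverge is in the decision between Type~I and Type~III, i.e.\ deciding whether $w_2(\widetilde X)=0$. You reduce this to the linear-algebra question of whether $w_2(X)$ lies in the image of $c^*\colon H^2(B\pi^{(5)};\Z/2)\to H^2(X;\Z/2)$, justified by the exactness of $H^2(B\pi;\Z/2)\to H^2(X;\Z/2)\to H^2(\widetilde X;\Z/2)$ coming from the Serre spectral sequence. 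The paper instead does something more direct: since $\pi$ is finite, the universal cover $\widetilde X$ is a \emph{compact} $4$-manifold, and the paper explicitly builds a finite triangulation $\widetilde{\mathcal{K}}$ of it from $\mathcal{K}$ (taking $\lvert\pi\rvert$ copies of a maximal tree and gluing edges and then higher simplices according to the deck transformation data), then computes $w_2(\widetilde{\mathcal K})$ from the $\Z/2$-intersection form of $\widetilde{\mathcal{K}}$ using Wu again. Your worry that $\widetilde X$ is ``typically non-compact and not directly triangulated'' is unfounded in this finite-$\pi$ setting; the paper exploits exactly that compactness. On the other hand, your image-membership test is a legitimate alternative, and the Serre spectral sequence argument you rely on is invoked by the paper too, just for a different purpose: once the type is determined, the paper uses it to justify that $w_2^\pi$ is the \emph{unique} preimage of $w_2(X)$ under the (injective) map $c^*$. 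One small omission on your end: you say ``set $w_2^\pi$ to be a preimage'' without noting this uniqueness, which matters downstream because the $1$-type data of the two input manifolds must be comparable; fortunately, injectivity of $c^*$ (from the same spectral sequence computation, since $H^1(\widetilde X;\Z/2)=0$) makes the preimage unique, so your algorithm does produce the correct output.
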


\begin{proof}
    The first entry needed for the 1-type data ($T_{\pi}$) is already given to us.  To build the truncated classifying space, we apply the construction used in \cref{lem:simplicialMap} but halt the bar construction after adding the $5$-simplices. We now determine the final element of the 1-type data, $w_2^{\pi}$, which requires the most effort.

    The first step is to build a triangulation $\widetilde{\mathcal{K}}$ which models the universal cover of $M$.  Start by choosing a maximal tree $T$ in $\mathcal{K}$ and taking $n:=\lvert\pi\rvert$ disjoint copies of it and labelling these by group elements $g_1,\dots, g_n$.  Denote the result by $\widetilde{\mathcal{K}}_0$.  Every remaining $1$-simplex $e$ in $\mathcal{K}$ corresponds to a group element $g(e)$.  Suppose $e$ is a $1$-simplex between vertices $v_1$ and $v_2$ and let $v^{g_k}_1$ and $v^{g_k}_2$ denote the various lifts.  Then form $\widetilde{\mathcal{K}}_1$ by, for every 1-simplex $e=(v_1,v_2)$ not in $T$ we glue in $n$ 1-simplices $e^{g_k}=(v_1^{g_k},v_2^{g(e)\cdot g_k})$.  We glue the $k$-simplices in $\mathcal{K}$ for $2\leq k\leq 4$ dimension by dimension. The attaching maps of these simplices lift uniquely, once any lift of any vertex of its boundary is fixed. There are $n$ such lifts. Thus we have constructed $\wt{\mathcal{K}}$.

    Next, we recall that the Wu formula says that in an oriented $4$-manifold $Y$, for every $y\in H^2(Y;\Z/2)$, we have $\langle w_2(Y),y\rangle=\lambda^{\Z/2}(y,y)$, see for example~\cite[Exercise~5.7.3]{GS}. From the universal coefficient theorem with $\Z/2$ coefficients it follows that this formula determines the Stiefel--Whitney class $w_2(Y)$. Thus we can compute $w_2({\mathcal{K}})$ and $w_2(\widetilde{\mathcal{K}})$ by using \cref{alg:interForm} to compute the parity of the $\Z_2$-intersection form of $\mathcal{K}$ and $\widetilde{\mathcal{K}}$.
        
If $w_2(\widetilde{\mathcal{K}})\neq 0$ then the 1-type data for $M$ has $w_2^\pi =\infty$.  If $w_2(\widetilde{\mathcal{K}})=0$ then the algorithm proceeds as follows.
Realise our given isomorphism $\pi_1(X)\to \pi$ by a map $X\to B\pi^{(5)}$ (\cref{lem:simplicialMap}). From the Serre spectral sequence, one can deduce that the following sequence is exact
\[\begin{tikzcd}
    H^2(B\pi;\Z/2)\ar[r]& H^2(X;\Z/2)\ar[r]& H^2(\wt{X};\Z/2)\ar[r]&0
\end{tikzcd}\]
define $w_2^{\pi}\in H^2(B\pi^{(5)};\Z/2)$ to be the unique pre-image of $w_2(X)$.

Using the multiplication table $T_{\pi}$ one can find the finite set of automorphisms $\Aut(\pi)$. Pre-composing these with the given map $\pi_1(X)\to\pi$ gives us the finite set $\Isom(\pi_1(X),\pi)$ of all isomorphisms $f\colon\pi_1(X)\to \pi$. To generate all reduced 1-smoothing data, apply \cref{lem:simplicialMap} for each isomorphism $f$ in $\Isom(\pi_1(X),\pi)$, to obtain a simplicial map  $c_f\colon B\pi_1(X)^{(5)}\to B\pi^{(5)}$ with $(c_f)_* = f$.  Output only those that give $\iota^*(Bf)^{*}(w_2^{\pi})=w_2(X)$.  
\end{proof}

Next, we give a computational method to decide the vanishing of the primary invariant $\pri\colon \Omega_4^{\xi}\to H_4(B\pi)$.

\begin{lemma}\label{alg:primaryInvariant}
    Let $\pi$ be a finite group with multiplication table $T_{\pi}$. There exists an algorithm to calculate the primary invariant $\pri\colon \Omega_4^{\xi}\to H_4(B\pi)$, that
\begin{itemize}
    \item takes as input two oriented triangulations $\mathcal{K}$ and $\mathcal{L}$ of two closed, oriented, smooth $4$-manifolds $M$ and $N$ respectively with the same 1-type data, along with both of their reduced smoothing data and isomorphisms $\pi_1(M)\cong \pi$ and $\pi_1(N)\cong \pi$ $($\cref{def:proofPi1Isomorphic}$)$, and
    \item outputs whether or not there exists reduced 1-smoothings $c_M$ and $c_N$ such that \[ (c_N)_*^{-1}\left((c_M)_*(\iota_M)_*[M]\right) = (\iota_N)_*[N],\] where $\iota_M\colon M\to B\pi_1(M)$ and $\iota_N\colon N\to B\pi_1(N)$ are the Postnikov truncations. 
\end{itemize}
\end{lemma}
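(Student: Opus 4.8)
The plan is to reduce the stated condition to a decidable equality of homology classes in $H_4(B\pi;\Z)$ and then to run a finite search over the two given reduced $1$-smoothing data sets. First observe that, since the reduced $1$-smoothing data are produced (as in \cref{alg:normalData}) from isomorphisms in $\Isom(\pi_1(N),\pi)$, every map $c_N$ in the data for $N$ induces an isomorphism on $\pi_1$, hence an isomorphism $(c_N)_*\colon H_4(B\pi_1(N)^{(5)};\Z)\to H_4(B\pi^{(5)};\Z)$ on group homology, and similarly for $M$. Using $(c_M)_*(\iota_M)_* = (c_M\circ\iota_M)_*$ and invertibility of $(c_N)_*$, the condition
\[(c_N)_*^{-1}\big((c_M)_*(\iota_M)_*[M]\big)=(\iota_N)_*[N]\]
is equivalent to the equality
\[(c_M\circ\iota_M)_*[M]=(c_N\circ\iota_N)_*[N] \quad\text{in } H_4(B\pi;\Z).\]
Since the hypotheses give $M$ and $N$ \emph{the same} $1$-type data, both reduced $1$-smoothing data sets consist of maps into the same triangulated $(B\pi)^{(5)}$, so these classes live in a common group; it therefore suffices to decide, for the finitely many pairs $(c_M,c_N)$ in the product of the two data sets, whether this equality holds, and to return \emph{yes} exactly when some pair works.

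Next I would compute $H_4(B\pi;\Z)$. The $1$-type datum supplies a triangulation of $(B\pi)^{(5)}$; since cells of dimension $\geq 6$ do not affect $H_4$, the integral simplicial homology $H_4((B\pi)^{(5)};\Z)$ equals $H_4(B\pi;\Z)$ and is computable by Smith normal form applied to the boundary matrices $\partial_5$ and $\partial_4$ (working with the two-fold barycentric subdivision of $(B\pi)^{(5)}$ throughout, which does not change the homology). To produce the two cycles representing the classes above for a fixed pair $(c_M,c_N)$: by definition of the reduced $1$-smoothing data, $c_M\circ\iota_M$ is simplicial with respect to the two-fold barycentric subdivision $\mathcal{K}''$ of $\mathcal{K}$ (and of $(B\pi)^{(5)}$), and likewise $c_N\circ\iota_N$ with respect to $\mathcal{L}''$. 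The fundamental cycle of $M$ is the sum of the oriented $4$-simplices of $\mathcal{K}''$ — barycentric subdivision of an oriented triangulation is canonically oriented, cf.\ the proof of \cref{alg:interForm} — so applying the simplicial chain map induced by $c_M\circ\iota_M$ yields an explicit integral $4$-cycle in $C_4((B\pi)^{(5)};\Z)$ representing $(c_M\circ\iota_M)_*[M]$, and analogously for $N$.

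Finally, for each pair $(c_M,c_N)$ I would subtract the two representative cycles and decide whether the difference is null-homologous: this amounts to solving the integral linear system $\partial_5 y = z$ for the difference cycle $z$, which is decidable by Smith normal form. Enumerating $A_M\times A_N$ and reporting whether any pair yields a null-homologous difference completes the algorithm; termination is clear since both $A_M$ and $A_N$ are finite and every step above is a finite computation over $\Z$.

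The step requiring the most care is the correctness of the chain-level computation: one must check that the simplicial chain map associated to the chosen simplicial representative of $c_M\circ\iota_M$ really sends the fundamental cycle $\sum_{\sigma}\sigma$ of $\mathcal{K}''$ to a genuine representative of the homological pushforward $(c_M\circ\iota_M)_*[M]$, and that the orientation conventions on iterated barycentric subdivisions are tracked consistently so that no sign is lost. This bookkeeping is purely formal once \cref{lem:simplicialMap} is in hand — it guarantees both the existence of the simplicial representatives and the factorisation through $B\pi_1(\cdot)^{(5)}$ used implicitly above — so I do not expect any genuine difficulty beyond setting up notation; all remaining ingredients (constructing $(B\pi)^{(5)}$, computing $H_4$, enumerating $A_M$ and $A_N$, and solving the integral systems) are standard.
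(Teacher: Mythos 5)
Your proposal is correct and takes essentially the same approach as the paper: both reduce the question to comparing explicit simplicial $4$-cycles obtained by pushing forward the fundamental cycles via simplicial representatives, and both decide null-homology of the difference by solving an integral linear system (Smith normal form / \cref{lem:integralLinearEquations}). The only cosmetic difference is the target space of the comparison and the enumeration set: you push both classes into $(B\pi)^{(5)}$ and loop over pairs $(c_M,c_N)\in A_M\times A_N$, whereas the paper pushes everything into $B\pi_1(\mathcal{K})^{(5)}$ via maps $Bf$ for $f\in\Isom(\pi_1(\mathcal{L}),\pi_1(\mathcal{K}))$; these are equivalent since each $(c_M)_*$ is an isomorphism on $H_4$, and both enumerations are finite.
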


For the proof of \cref{alg:primaryInvariant} we will need the following simple algebraic fact. 

\begin{lemma}\label{lem:integralLinearEquations}
   There exists an algorithm that
    \begin{itemize}
        \item takes as input an $(m\times n)$-matrix $A$ with integer coefficients and a vector $b\in \Z^{n}$, and
        \item decides whether or not there exists a solution $x\in \Z^m$ of 
        $Ax=b$, and if such a solution exists outputs a single solution.
    \end{itemize}
\end{lemma}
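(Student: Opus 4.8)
The plan is to reduce this to Smith normal form, a standard tool in computational linear algebra over $\Z$. First I would compute the Smith normal form of the matrix $A$: there exist invertible integer matrices $P \in \GL_m(\Z)$ and $Q \in \GL_n(\Z)$ (computable by a finite sequence of elementary row and column operations over $\Z$, tracking the transformation matrices) such that $PAQ = D$, where $D$ is an $(m \times n)$ ``diagonal'' matrix with nonnegative integer entries $d_1 \mid d_2 \mid \cdots$ on the diagonal and zeros elsewhere. All entries of $P$, $Q$, $P^{-1}$, $Q^{-1}$, and $D$ are integers, and the algorithm to produce them terminates in finite time.

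Next I would translate the equation $Ax = b$ into the new coordinates. Writing $y := Q^{-1} x$, the equation $Ax = b$ is equivalent to $PAQ y = Pb$, i.e.\ $Dy = Pb$. Set $b' := Pb \in \Z^n$, which we can compute. Because $D$ is diagonal, the system $Dy = b'$ decouples completely into scalar equations $d_i y_i = b'_i$ for each diagonal entry $d_i$ (where we read $d_i = 0$ once the diagonal entries run out), together with the equations $0 = b'_i$ coming from the zero rows of $D$. This system has an integer solution $y$ if and only if: (i) $b'_i = 0$ for every index $i$ beyond the diagonal part, and (ii) $d_i \mid b'_i$ for every $i$ with $d_i \neq 0$, and (iii) $b'_i = 0$ for every $i$ with $d_i = 0$ but $i$ within the diagonal range. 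Each of these finitely many divisibility and vanishing checks is decidable. If all checks pass, a solution is given by $y_i := b'_i / d_i$ when $d_i \neq 0$ and $y_i := 0$ otherwise; then $x := Q y$ is an integer solution of $Ax = b$, and we output it. If some check fails, no integer solution exists and we output that.

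This is entirely routine and there is no serious obstacle; the only point requiring any care is the correctness of the equivalence in the previous paragraph, which follows because $P$ and $Q^{-1}$ are integer-invertible, so they induce bijections $\Z^n \to \Z^n$ and $\Z^m \to \Z^m$ respectively, and hence $Ax = b$ has an integer solution if and only if $Dy = b'$ does. One could alternatively avoid Smith normal form and instead use Hermite normal form (row reduction over $\Z$ alone), which directly exhibits a $\Z$-basis of the image of $A$ and lets one test membership of $b$; either approach works. For definiteness I would present the Smith normal form argument as above.
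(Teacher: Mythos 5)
Your proof is correct and uses essentially the same approach as the paper: both reduce to Smith normal form of $A$ and then solve the resulting diagonal system coordinatewise. The only difference is a cosmetic one of convention ($PAQ = D$ versus the paper's $A = PDQ$), and you spell out the divisibility checks in more detail than the paper does, but the underlying argument is identical.
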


\begin{proof}
    Use Smith normal form to obtain a decomposition $A=PDQ$ for $P,Q$ some invertible square integer matrices and $D$ a diagonal rectangular matrix, i.e.\ $D_{ij}=0$ whenever $i \neq j$.  Apply the algorithm for the inverse of the matrix $P$ to obtain an equation we can easily solve 
   \[D(Qx)=P^{-1}b.\qedhere\]
\end{proof}

\begin{proof}[Proof of \cref{alg:primaryInvariant}]
Use the given group isomorphism data, along with the multiplication table $T_{\pi}$ to generate the finite set $\Isom(\pi_1(\mathcal{L}),\pi_1(\mathcal{K}))$ of all possible group isomorphisms. 

    For each $f\in \Isom(\pi_1(\mathcal{L}),\pi_1(\mathcal{K}))$ we construct a simplicial map $Bf \colon B\pi_1(\mathcal{L})^{(5)}\to B\pi_1(\mathcal{K})^{(5)}$ (see \cref{lem:simplicialMap}). This gives us a map $(Bf)\circ \iota_\mathcal{L} \colon \mathcal{L}\to B\pi_1(\mathcal{K})^{(5)}$. Similarly, we make a simplicial map $\iota_{\mathcal{K}} \colon \mathcal{K}\to B\pi_1(\mathcal{K})^{(5)}$ which induces the identity on the fundamental groups.

    For a finite triangulated $4$-dimensional  manifold, its fundamental class is the class given by the sum of all of its top dimensional oriented simplices. Set 
    $$\alpha :=(\iota_{\mathcal{K}})_*[\mathcal{K}] - ((Bf) \circ (\iota_{\mathcal{L}})_*)[\mathcal{L}]\in C^{\simp}_{4}(B\pi_1(\mathcal{K})^{(5)}),$$
    which is a cycle in the simplicial chain complex.

    The simplicial complex $C^{\simp}_*(B\pi_1(\mathcal{K})^{(5)})$ is finite in each degree, hence the differential
    \[C_5^{\simp}(B\pi_1(\mathcal{K})^{(5)})\to C_4^{\simp}(B\pi_1(\mathcal{K})^{(5)})\]
    is given by a finite dimensional matrix $A$. We apply \cref{lem:integralLinearEquations} to solve the following problem. 
    \[\text{Does } Ax=\alpha \text{ for some } x\in C_5^{\simp}(B\pi_1(\mathcal{K})^{(5)})?\]
    If such an $x\in C_5^{\simp}(B\pi_1(\mathcal{K})^{(5)})$ exists we return `yes', otherwise we move on to the next isomorphism. If all isomorphisms have been exhausted, we return `no'.
\end{proof}

\subsection{Proof of \texorpdfstring{\Cref{thm:mainAlmostSPinOrSpin,thm:mainNon-Spin}}{}}

In this section, we prove the main theorems on stable classification. We will be relying on the results of Teichner's thesis~\cite{teichnerthesis}.

\begin{proof}[Proof of \cref{thm:mainNon-Spin}]
    In the case of \eqref{item:thm-8-4-i}, start by generating a certificate (\cref{def:proofPi1Isomorphic}) that the fundamental groups are both infinite cyclic by applying \cref{cor:CompareGroups_infcyclic}.  Similarly, in the case of \eqref{item:thm-8-4-ii}, start by generating a certificate that the fundamental groups of $X_1$ and $X_2$ are isomorphic to some group $\pi$ by applying \cref{cor:CompareGroups}. If the fundamental groups are not isomorphic then our manifolds are not stably diffeomorphic.
    
    By a calculation using the Atiyah--Hirzebruch spectral sequence (see e.g.\ \cite[p.~10]{teichnerthesis}) we have $\xi\colon\BSO\times B\pi\to \BSO$ and $$\begin{tikzcd}
        \Omega_4^{\xi}\ar[r,"\cong"']\ar[r,"\pri"]&\Omega_4^{SO}\oplus H_4(B\pi)\ar[r,"\cong"']\ar[r,"\sigma\oplus \id"]&\Z\oplus H_4(B\pi).
    \end{tikzcd} $$
Either the geometrical dimension of $\pi$ is at most $3$, implying that $H_4(B\pi;\Z)=0$ and two such totally non-spin manifolds $M,N$ are orientation-preserving stably diffeomorphic if and only if $\sigma(M)=\sigma(N)$, which is computable (\cref{alg:signature}).
    Or, $\pi$ is finite and we apply Algorithms \ref{alg:signature} and \ref{alg:primaryInvariant}.
\end{proof}

\begin{proof}[Proof of \cref{thm:mainAlmostSPinOrSpin}]

In the case of \eqref{item:thm-8-3-ii} start by generating a certificate (\cref{def:proofPi1Isomorphic}) that the fundamental groups of $X_1$ and $X_2$ are isomorphic to some group $\pi$ by applying \cref{cor:CompareGroups}. If the fundamental groups are not isomorphic then our manifolds are not stably diffeomorphic.
    
    We apply \cref{alg:normalData} to compute the 1-type data and reduced smoothing data for both of our given manifolds.  If they do not have the same 1-type data then they are not stably diffeomorphic, so we assume that they do from now on.
    
    It is sufficient to show that in all of these cases the map
\begin{equation*}
\Omega_4^{\xi}\xrightarrow{\sigma+\pri} \Z\oplus H_4(B\pi)  
\end{equation*}
is an injection.
Then we finish the proof by applying the algorithms in  \cref{alg:primaryInvariant,alg:signature} or, in the case of $\pi$ infinite cyclic, only Algorithm \ref{alg:signature}.

The case \eqref{item:thm-8-3-i} of $\pi$ infinite cyclic is handled by using the Atiyah--Hirzebruch spectral sequence for types (I) and (II). Since $H^2(B\Z;\Z/2)=0$, type (III) is impossible.

For the case of $\pi$ a finite group with cyclic Sylow $2$-subgroup, we use the result of \cite[Theorem 4.4.4.]{teichnerthesis} stating that the following maps given by the signature and the invariant $\pri$ are isomorphisms
\[\begin{array}{rcll}
    \Omega_4^{\xi}&\cong&\Z\oplus H_4(B\pi) & \text{for type (I)},\\
    \Omega_4^{\xi}&\cong& 16\Z\oplus H_4(B\pi) & \text{for  type (II)},\\
    \Omega_4^{\xi}&\cong& 8\Z\oplus H_4(B\pi) & \text{for  type (III) and $\pi$ of even order},\\
    \Omega_4^{\xi}&\cong& 16\Z\oplus H_4(B\pi) & \text{for type (III) and $\pi$ of odd order}.  
\end{array}\]
\end{proof}


\let\MRhref\undefined
\bibliographystyle{hamsalpha}
\bibliography{bib.bib}

\vspace{-7cm}

\end{document}